%%
%%
%% ArXiv version, 19th July 2007.
%%
%%%%%%%%%%%%%%%%%%%%%%%%%%%%%%%%%%%%%%%%%%%%%%%%%%%%%%%%%%%%%%%%%%%%%%%%%%%%%%%%

\documentclass[12pt]{amsart} 
\usepackage{amssymb}
\usepackage{amsmath}
\usepackage{amsthm}
\usepackage{natbib}
\usepackage{pstricks}
\usepackage{pst-node}
\usepackage[normalem]{ulem}
\usepackage{bm} %% this allows bold lower case greek letters in math mode.
\usepackage{graphicx}
\usepackage{color}
\def\red#1 {\textcolor{red}{#1 }}

\newtheorem{thm}{Theorem}[section]
\newtheorem{lem}[thm]{Lemma}
\newtheorem{prop}[thm]{Proposition}
\newtheorem{cor}[thm]{Corollary}
\theoremstyle{remark}
    \newtheorem{rem}{Remark}   %numbering for remarks independent of theorems
\theoremstyle{definition}
    \newtheorem{defn}[thm]{Definition}

\numberwithin{thm}{section}    %numbers equations according to their section
\allowdisplaybreaks[4]  %allows displayed equations to be split over pages where necessary (amsmath)

\def\Z {{\mathbb Z}}
\def\N {\mathbb N}
\def\T {{\tilde T}}
\def\H {{\mathcal H}}

\def\B {{\mathcal B}}

\def\l {\left\langle}
\def\r {\right\rangle}

\def\cx {{\mathbb C}}
\def\al {{\alpha}}

%------------
\begin{document}

\title[centralizers in Hecke algebras]{Centralizers in the Hecke algebras of complex reflection groups.}
\author{Andrew Francis}
\address{School of Computing and Mathematics\\ University of Western Sydney\\ Locked Bag 1797, South Penrith DC, NSW 1797\\ Australia}
\email{a.francis@uws.edu.au}

\begin{abstract}
How far can the elementary description of centralizers of parabolic subalgebras of Hecke algebras of finite real reflection groups be generalized to the complex reflection group case?  In this paper we begin to answer this question by establishing results in two directions.  First, under conditions closely analogous to those existing for the real case, we give explicit relations between coefficients in an element centralizing a generator.  Second, we introduce a tool for dealing with a major challenge of the complex case --- the ``instability'' of certain double cosets --- through the definition and use of a double coset graph.  We use these results to find integral bases for the centralizers of generators as well as the centres of the Hecke algebras of types $G_4$ and $G(4,1,2)$.

Keywords: complex reflection group; Hecke algebra; centre; centralizer; modular; double coset
\end{abstract}

\maketitle

\setcounter{section}{-1}

\section{Introduction}

The purpose of this paper is to begin a study of the integral centres and centralizers of Hecke algebras of complex reflection groups.  In the case of Hecke algebras of real reflection groups, these subalgebras are fairly well understood.  For instance, it is possible to algorithmically construct integral bases for centralizers of parabolic subalgebras of the Hecke algebra that are analogous to class-sum bases of centralizers in the group algebra~\cite{GR97,Fmb,Fcent,Fcent2}.
From an alternative viewpoint, it is known that the centre of the Hecke algebra of a real reflection group is integrally the set of symmetric functions of Jucys-Murphy elements~\cite{DJ87,FG:DJconj2006}. The corresponding result for Hecke algebras of complex reflection groups is not known, although it does hold when the algebra is semisimple, at least for the Ariki-Koike algebras~\cite{RamRamagge03}.

This paper generalizes aspects of the former of the above approaches.  One of the ingredients in our understanding of the real reflection group case is the existence of an integral basis for the centralizer of a standard generator of the Hecke algebra with nice properties~\cite{DJ87,Fmb}.  Finding an integral basis for the centralizer of a generator in the complex reflection group situation proves to be much more complicated.  The obstacles are mainly due to problems associated with the lack of a basis for the algebra which is independent of choices for reduced expressions of words in the complex reflection group.   That is, in the case of complex reflection groups, it is \emph{not} the case that if $s_{i_1}\dots s_{i_t}$ and $s_{j_1}\dots s_{j_t}$ are reduced expressions for the same element of the group, then the corresponding elements in the Hecke algebra $T_{s_{i_1}}\dots T_{s_{i_t}}$ and $T_{s_{j_1}}\dots T_{s_{j_t}}$ are also equal.  Other related problems are to do with the double cosets in the complex reflection groups, which in general fail to satisfy a ``stability'' property that holds in the real case.

In this paper we find relationships between coefficients in elements of the centralizer of a generator whenever the double cosets behave as they do in the real case. This allows us to find integral bases for the centralizers under these conditions.  In the Hecke algebra of type $G_4$, for example, this behaviour is seen for all double cosets, and this allows us to give a basis for the centraliser of each generator, and in turn to find a basis for the centre.

To study the centralizer of a generator when the double cosets are not ``stable'' (Definition~\ref{def:stabledcoset}), we introduce a relation among the double cosets that restricts the problem somewhat.  The ``$\H$-double coset graph'' of a complex reflection group is fully described for the class of groups $G(r,1,2)$ in Section \ref{sec:gr1n:reduced.exprs}, and we use this to give bases for the centralizers of generators for the Hecke algebra of $G(4,1,2)$, in turn using these bases to find an integral basis for the centre of the Hecke algebra of $G(4,1,2)$.

The paper is organized as follows.  Section \ref{sec:background} gives the basic definitions of complex reflection groups and their Hecke algebras, including a definition of normalized generators which we introduce to make the exposition simpler.  Section \ref{sec:d.cosets.in.cx.ref.gps} introduces the notion of the $\H$-double coset graph, used later to find relationships among coefficients in the centralizer of a generator.  We deal with the case where the double cosets are well-behaved (``stable'') in Section \ref{sec:additive.d.cosets.coeff.rels}, a case which despite having behaviour analogous to the real case still poses a technical challenge to resolve.  Theorem \ref{characterized} gives the precise relationship between coefficients in an element centralizing a generator in this case.  In Section \ref{sec:basis.thm.for.centralizers} a result is proved giving sufficient conditions for the existence of an integral basis for the centralizer of a parabolic subalgebra.  This result (Proposition \ref{basis}) summarizes the approach taken in other papers \cite{Fmb,Fcent,Fcent2} to finding integral bases for centralizers in more amenable (Coxeter group) situations.  The principal being applied in this paper is to attempt to develop the correct analogy to this approach.

In Section \ref{sec:g4} we apply some of the earlier results to the group $G_4$, studying firstly the centralizers and then the centre, giving integral bases in each case.  Finally Section \ref{sec:Gr1n} addresses the Hecke algebras of the groups $G(r,1,n)$.  Section \ref{sec:gr1n:reduced.exprs} gives some results on reduced expressions specific to these groups, and a general statement about some of the double cosets that are easier to deal with.  Section \ref{sec:gr12.d.coset.graphs} provides a complete description of the $\H$-double coset graphs for the Hecke algebras of $G(r,1,2)$.  Section \ref{sec:g412.bases} gives the example of $G(4,1,2)$, demonstrating bases for the centralizer of each generator as well as for the centre.

I would like to thank Michel Brou\'e for suggesting in December 2000 that I look at the centre of the Hecke algebra of $G_4$, a question which led to the work in this paper, and Chi Mak, who gave me some helpful comments on an earlier version of this paper.

\section{Preliminaries}\label{sec:background}

\subsection{Complex reflection groups}\label{sec:bg:cxrefgps}
Groups of linear transformations which act on the vector space $\cx^n$ fixing a hyperplane are called complex reflection groups.  The finite complex reflection groups were characterized in 1954 by Shephard and Todd \cite{ShTo54} into an infinite family $G(r,p,n)$ where $p|r$, and 34 exceptional groups labelled $G_4$ to $G_{37}$.  These groups include the finite real reflection groups (Coxeter groups), and can be given Coxeter-type presentations (\cite{BMR98,BessisMichel2003}) consisting of a generating set of ``pseudo-reflections'' accompanied by a set of homogeneous relations.  The pseudo-reflections are reflections in the sense that they fix a hyperplane in $\cx^n$, but are not involutions as their non-trivial eigenvalue may be a root of unity other than $-1$.

For example, the infinite family $G(r,1,n)$, which includes the Coxeter groups of types $A$ and $B$ (when $r=1$ and 2 respectively), has generating set $S=\{t,s_1,\dots,s_{n-1}\}$ with order relations $t^r=s_i^2=1$, and homogeneous relations $ts_1ts_1=s_1ts_1t$, $s_is_{i+1}s_i=s_{i+1}s_is_{i+1}$ for $0<i<n-1$ and $s_is_j=s_js_i$ if $|i-j|>1$.  It can be represented by a Coxeter-type diagram as in Figure \ref{fig:gr1n}.

\begin{figure}[ht]
   \begin{minipage}[t]{\linewidth}
        \centering
        %\scalebox{0.8}{
        \includegraphics{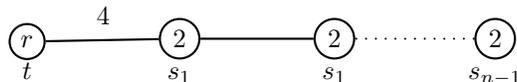}%}%
\caption{Coxeter-type diagram for the complex reflection group $G(r,1,n)$}\label{fig:gr1n}
   \end{minipage}%
\end{figure}

Let $W$ be a finite complex reflection group with generating set $S$.  For $g\in W$, we set the length $l(g)$ of $g$ to be the minimal number of standard generators required to write $g$ (with positive exponents).

\subsection{The Hecke algebra of a complex reflection group}\label{sec:heckealg}

\subsubsection{Standard definitions}
The group algebra of a complex reflection group $W$ can be viewed as a quotient of a corresponding braid group algebra by an ideal generated by the order relations in the group.  Hecke algebras of the complex reflection groups are also obtained by taking a quotient of the braid group algebra (considered over a ring of polynomials in some indeterminates), but by an ideal generated by deformed order relations.  The Hecke algebras are then so-called ``order-deformations'' of the group algebra of the complex reflection group.  Full details of the construction of these algebras in this way are found in \cite{BMR98} and \cite{BrBoston}.

For each generator $s\in S$ of order $m_s$, we define a set of indeterminates $\{\al_{s,i}\}$ for $i=0,\dots,m_s-1$, and set $\al_{s,i}=\al_{t,i}$ if $s$ and $t$ are conjugate pseudo-reflections.  Let $\bm\al=\{\al_{s,i}^{1/m_s}\mid s\in S,0\le i<m_s\}$ and let $\bm\al^{-1}$ be the set of inverses of the parameters $\bm\al$.  Let $R_\al=\Z[\bm\al,\bm\al^{-1}]$.  We will write $T_s$ for the generator of the Hecke algebra $\H_\al=\H_\al(W)$ corresponding to the generator $s$ of the braid group and of the complex reflection group.  The Hecke algebra of the complex reflection group $W$ is then defined over $R_\al$ to be the quotient of the braid group algebra by the deformed order relations
\[\prod_{i=0}^{m_s-1}(T_s-\al_{s,i})=0, \quad\text{for }s\in S.\]
Thus $\H_\al$ inherits the defining homogeneous relations from the group algebra of the complex reflection group, but instead of order relations $s^{m_s}=1$, $T_s^{m_s}$ is a polynomial in $T_s$ of degree $m_s-1$ and whose coefficients are elementary symmetric functions in $\bm\al$.  If we set $e_r=e_r(\al_{s,0},\dots,\al_{s,m_s-1})$ to be the $r$th elementary symmetric function in the $\al_{s,i}$ then
\begin{equation}\label{powerrel}
T_s^{m_s}=e_1T_{s^{m_s-1}}+\dots +(-1)^{i-1}e_i T_{s^{m_s-i}}+\dots+ (-1)^{m_s-1}e_{m_s}T_1.
\end{equation}
Note that the specialization $\rho:\al_{s,j}\mapsto \omega^j$ (for $\omega$ a primitive $m_s$th root of unity) sends each elementary symmetric function $e_i$ to zero except for $e_{m_s}$ which is sent to 1 if $m_s$ is odd and $-1$ if $m_s$ is even.  This specialization thus sends relation \eqref{powerrel} to the group algebra relation $T_s^{m_s}=1$.

\subsubsection{Normalizing coefficients}
We will normalise the coefficients and generators in $\H_\al$, instead considering an algebra $\H$ over a subring $R$ of $\bar R_\al:=R_\al[\zeta]$ where $\zeta$ is a primitive $2m_s$th root of unity.  Then $\H$ is a subalgebra of $\bar R_\al\otimes\H_\al$.
Results obtained in $\H$ have corresponding versions in the full Hecke algebra $\H_\al$.

For $0\le i<m_s$, we set $\T_1=T_1$ and
\begin{align*}
\xi_{s,i} &=-(-e_{m_s})^{\frac{i-m_s}{m_s}}e_{m_s-i} \quad\text{ and}\\
 \T_s    &=-(-e_{m_s})^{-1/m_s}T_s.
\end{align*}
Note that $e_{m_s}=\al_{s,0}\al_{s,1}\dots\al_{s,m_s-1}$ is invertible in $R_\al$, and $\xi_{s,0}$ is identically one.  Under this change of parameterization, the order relation \eqref{powerrel} becomes
\begin{equation}\label{powerrel:xi}
\T_s^{m_s}=\xi_{s,m_s-1}\T_{s^{m_s-1}}+\dots +\xi_{s,m_s-i}\T_{s^{m_s-i}}+\dots+\T_1.
\end{equation}
Let $\bm\xi:=\{\xi_{s,i}\mid s\in S, 0\le i< m_s\}$.

Let $\H$ denote the algebra generated by $\{\T_s\mid s\in S\}$ over $R=\Z[\bm\xi]$ subject to the inherited braid relations and the normalized order relation \eqref{powerrel:xi}.  Note that when the indeterminates $\al_{s,i}$ are specialized to roots of unity via $\rho$, we have
$\xi_{s,i}\mapsto 0 $ for all $0<i<m_s$.

The following will be of use later in the paper.

\begin{lem}\label{lem:power.of.Ts}
  Let $m$ be the order of the generator $s$ and write $\xi_i=\xi_{s,i}$. For $k\ge 0$, we have
\[\T_s^{m+k}=\sum_{i=0}^{m-1}a_i\T_{s^i}\]
where the coefficients $a_i$ are polynomials in $\{\xi_j\mid 0\le j\le m-1\}$ such that the index sum of each monomial in $a_i$ is congruent to $i-k\mod m$.  In particular, the coefficients $a_i$ are distinct.
\end{lem}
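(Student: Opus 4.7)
\emph{Proof proposal.} The plan is to argue by induction on $k \geq 0$, tracking how left multiplication by $\T_s$ affects the index sum of each monomial in the coefficients, with the normalised order relation \eqref{powerrel:xi} handling the reduction whenever $\T_s^m$ appears.

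The base case $k = 0$ is exactly \eqref{powerrel:xi}: using the convention $\xi_0 = 1$, we write $\T_s^m = \sum_{i=0}^{m-1} \xi_i \T_{s^i}$, so $a_i = \xi_i$ is a single monomial of index sum $i \equiv i - 0 \pmod m$, as required.

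For the inductive step, suppose $\T_s^{m+k} = \sum_{j=0}^{m-1} a_j \T_{s^j}$ with the asserted congruence. Left-multiplying by $\T_s$ and then applying \eqref{powerrel:xi} to the term $a_{m-1}\T_s^m$ gives
\[
\T_s^{m+k+1} \;=\; \sum_{j=0}^{m-2} a_j \T_{s^{j+1}} + a_{m-1} \T_s^m \;=\; a_{m-1} \T_1 \;+\; \sum_{i=1}^{m-1}\bigl(a_{i-1} + \xi_i \, a_{m-1}\bigr) \T_{s^i}.
\]
The new coefficient of $\T_1$ is $a_{m-1}$, whose monomials have index sum $\equiv (m-1) - k \equiv -(k+1) \equiv 0 - (k+1) \pmod m$. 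For $1 \leq i \leq m-1$, the new coefficient $a_{i-1} + \xi_i a_{m-1}$ combines terms inherited from $a_{i-1}$, of index sum $\equiv (i-1) - k \pmod m$, with terms from $\xi_i a_{m-1}$, of index sum $\equiv i + (m-1) - k \equiv (i-1) - k \pmod m$; both are $\equiv i - (k+1) \pmod m$. This closes the induction.

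The distinctness assertion is then immediate: the residues $0, 1, \dots, m-1$ are pairwise distinct $\pmod m$, so the monomial supports of the $a_i$ lie in pairwise disjoint residue classes. Together with the fact that none of the $a_i$ vanishes identically --- which follows from a short separate induction on the recursion $a_0' = a_{m-1}$, $a_i' = a_{i-1} + \xi_i a_{m-1}$, since a common monomial of $a_{i-1}$ and $\xi_i a_{m-1}$ could only arise through their distinct monomial origins --- this forces $a_i \neq a_j$ for $i \neq j$. The only real delicacy is careful bookkeeping around the convention $\xi_0 = 1$ and the reindexing of $\T_s \cdot \T_s^{m-1}$ via \eqref{powerrel:xi}; no conceptual obstacle arises.
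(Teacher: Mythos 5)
Your proposal is correct and follows essentially the same route as the paper's own proof: induction on $k$, left multiplication by $\T_s$, reduction of the term $a_{m-1}\T_s^m$ via the normalized order relation \eqref{powerrel:xi}, and the same index-sum bookkeeping showing both contributions to $a_{i-1}+\xi_i a_{m-1}$ lie in the residue class $i-(k+1)\bmod m$. Your extra paragraph on distinctness (disjoint residue classes plus nonvanishing of each $a_i$, which holds because the recursion only adds monomials with positive coefficients in the integral domain $\Z[\bm\xi]$, so no cancellation can occur) is a sound elaboration of a point the paper leaves implicit.
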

\begin{proof}
By induction on $k$.  If $k=0$ then we have from \eqref{powerrel:xi}:
  \[\T_s^m=\xi_0\T_1+\xi_1\T_{s}+\dots+\xi_{m-1}\T_{s^{m-1}}.\]
That is, $a_i=\xi_i$ and the index sum of the only term in $a_i$ is $i$.  Since $k=0$ the statement holds.

  Suppose now that $\T_s^{m+k-1}=\sum_{i=0}^{m-1}a_i\T_{s^i}$ with each $a_i$ being a linear combination of monomials in the $\xi_j$ with index sum $\equiv i-(k-1)\mod m$.  Then (setting $a_{-1}=0$)
\begin{align*}
\T_s^{m+k}=\T_s\sum_{i=0}^{m-1}a_i\T_{s^i}
    &=\left(\sum_{i=0}^{m-2}a_i\T_{s^{i+1}}\right)+a_{m-1}\T_s^m\\
    &=\sum_{i=1}^{m-1}a_{i-1}\T_{s^i}+\sum_{i=0}^{m-1}a_{m-1}\xi_i\T_{s^i}\\
    &=\sum_{i=0}^{m-1}\left(a_{i-1}+a_{m-1}\xi_i\right)\T_{s^i}.
\end{align*}
By induction we have that the index sums of $a_{i-1}$ are congruent to $(i-1)-(k-1)\equiv i-k\mod m$.  Similarly, since the index sum of $a_{m-1}$ is congruent to $(m-1)-(k-1)\equiv m-k\mod m$, we have that the index sum of $a_{m-1}\xi_i$ is congruent to $i+(m-k)\equiv i-k\mod m$.  Thus $a_{i-1}+a_{m-1}\xi_i$ has monomials whose index sums are congruent to $i-k\mod m$.
\end{proof}

\subsubsection{The algebra as an $R$-module}\label{subsubsec:R-mod}

It has been conjectured that the Hecke algebra $\H$ of a finite complex reflection group $W$ is a free $R$-module of dimension $|W|$ (see \cite[(1.17)]{BMM99} or \cite[(4.23)]{BMR98}).  This is known for the infinite family $G(r,p,n)$ and for all but a handful of exceptional cases (\cite{Mueller03}).  It is a slightly different question as to whether $\H$ has a particularly nice basis (for instance a basis consisting of reduced words in the generators $S$), whose answer is less complete (op. cit.).

The key problem for dealing with the Hecke algebra of a complex reflection group as an $R$-module is the lack of a natural choice of basis corresponding to the standard basis of the corresponding group algebra (the set of group elements).  This lack of a natural basis is caused by problems with reduced expressions for elements of the complex reflection group.

Let $Red(S)$ be the set of reduced words in the generating set $S$ for the complex reflection group $W$. Let $\sim_\mathrm{br}$ denote the equivalence relation on $Red(S)$ generated by the braid relations of $W$.  Then the map $\phi:Red(S)/\sim_{\rm br}\to W$ defined by $\phi(s)=s$ for $s\in S$ is surjective but not, in general, injective.  That is, there may be several $\sim_{\rm br}$-equivalence classes of reduced words in $S$ for a given element of $W$.

For each map $\eta:W\to Red(S)/\sim_{\rm br}$ satisfying $\phi_\circ\eta=\iota_W$, the identity map on $W$, we obtain a set of equivalence classes of reduced words $\{\eta(w)\mid w\in W\}$.  We will use the bold faced $\bm w$ to denote a representative of the class $\eta(w)$.  That is, $\bm w$ is a reduced word in the elements of $S$ representing the group element $w$ (following the conventions of~\cite{BremMa97}).
If $\bm w=s_{i_1}\dots s_{i_k}\in Red(S)$, then we write $\T_{\bm w}=\T_{s_{i_1}\dots s_{i_k}}:=\T_{s_{i_1}}\dots\T_{s_{i_k}}$.  If $\bm w\sim_{\rm br}\bm w'$ then $\T_{\bm w}=\T_{\bm w'}$.

In what follows we will assume that $\H$ is a free $R$-module with a basis $B$ of reduced words representing the elements of $W$ (a \emph{reduced basis} of $\H$).  A choice of reduced basis requires a choice of reduced expression for each element of the group $W$, or at least a choice of equivalence class in $Red(S)/\sim_{\rm br}$.  Consequently, each reduced basis $B$ corresponds to a map $\eta_B:W\to Red(S)/\sim_{\rm br}$.  To emphasize that the reduced word representing a group element is dependent on the basis $B$ via $\eta_B$, we write $\bm w, B$ for a representative of $\eta_B(w)$, so that $B=\{\T_{\bm w, B}\mid w\in W\}$.  That is, $\T_{\bm w,B}$ denotes the Hecke algebra element in the basis $B$ for $\H$ that corresponds to the group element $w$ (via the reduced expression $\bm w$ for $w$).

The assumption of the existence of a reduced basis, while known to hold for most cases, is not in itself enough to ensure that $\H$ behaves well (in the sense that it ``behaves like the Hecke algebra of a Coxeter group'').  With Hecke algebras of Coxeter groups, bases of reduced words of group elements are in fact equal, as each pair of reduced words for the same group element can be transformed from one to the other using only the braid relations.
That is, for Coxeter groups, the map $\phi:Red(S)/\sim_{\rm br}\to W$ is bijective; this ensures that the corresponding Hecke algebra elements are equal.  In Hecke algebras of complex reflection groups it is not in general the case that $\phi$ is injective and therefore not in general the case that two Hecke algebra elements corresponding to reduced words for the same group element are equal.  Hence there may be several distinct reduced bases for a given Hecke algebra.  In what follows we shall see that some (reduced) bases are more suited than others to different needs.

Although different reduced words may give different Hecke algebra elements, it is possible to say something about the relationship between the Hecke algebra elements in some complex reflection groups.

\begin{prop}[\cite{BremMa97}]\label{prop:bremal97}
Let $W=G(r,1,n)$.  If $\bm w$ and $\bm w'$ are two reduced words for the same element $w$ of $W$, then
\[\T_{\bm w}-\T_{\bm w'}\in\sum_{y\not\in S_n,l(y)<l(w)}R\T_{\bm y}.\]
\end{prop}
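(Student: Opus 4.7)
The plan is to proceed by induction on $l(w)$. For $l(w)\le 1$ the claim is vacuous, since any element of length at most one has a unique reduced expression. Assume the result for all elements of length strictly less than $l(w)$, and fix two reduced expressions $\bm w$ and $\bm w'$ for $w$.

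In the first case, suppose $\bm w$ and $\bm w'$ share a common first letter: $\bm w=s\bm u$ and $\bm w'=s\bm u'$ with $\bm u,\bm u'$ reduced expressions for the same element $w_0:=s^{-1}w$ of length $l(w)-1$. By induction $\T_{\bm u}-\T_{\bm u'}\in\sum_{y\not\in S_n,\,l(y)<l(w)-1}R\T_{\bm y}$, so $\T_{\bm w}-\T_{\bm w'}=\T_s(\T_{\bm u}-\T_{\bm u'})$, and it remains to check that left-multiplication by $\T_s$ keeps us in the target submodule. For each basis element $\T_{\bm y}$ appearing, the product $\T_s\T_{\bm y}$ must be re-expanded in the chosen reduced basis: when $s\bm y$ is reduced this is one further appeal to the inductive hypothesis (comparing $s\bm y$ to the chosen representative $\bm{sy}$); when it is not reduced one invokes Lemma~\ref{lem:power.of.Ts} to rewrite a power of $\T_s$, introducing only terms of strictly shorter length. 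In every subcase the $t$-count of the resulting $z$ is equal to (when $s=s_i$) or within one of (when $s=t$) that of $y$, so $z\not\in S_n$ continues to hold.

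In the second case, $\bm w$ and $\bm w'$ begin with different generators. Here I would reduce to the first case by proving an \emph{exchange-with-error} lemma for $G(r,1,n)$: for every generator $s$ with $l(sw)<l(w)$, there is a reduced expression $\bm w''$ for $w$ beginning with $s$ such that already $\T_{\bm w}-\T_{\bm w''}\in\sum_{y\not\in S_n,\,l(y)<l(w)}R\T_{\bm y}$. Applied with $s$ the first letter of $\bm w'$, this produces a $\bm w''$ sharing its first letter with $\bm w'$, after which the first case finishes the argument.

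The main obstacle is the exchange-with-error lemma, which is where the specific structure of $G(r,1,n)$ enters. The transformation of $\bm w$ to $\bm w''$ should be carried out via a sequence of moves using the braid relations of $G(r,1,n)$ --- the $S_n$-braids, the commutations $ts_i=s_it$ for $i>1$, and the length-four braid $ts_1ts_1=s_1ts_1t$ --- together with controlled applications of the deformed order relations. Correction terms coming from the $\T_{s_i}$-order relation preserve the $t$-count of a word and therefore automatically lie outside $S_n$. The delicate point is to show that any application of the $\T_t^r$-relation (whose expansion via Lemma~\ref{lem:power.of.Ts} includes a $\T_1$ term whose coefficient is independent of the $\xi$) always occurs inside a subword surrounded by enough additional $t$-letters that every resulting correction still has positive $t$-count. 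It is here that the invariance of the $t$-count across all reduced expressions of a given element of $G(r,1,n)$ plays the decisive role, and a careful case analysis of which subwords of a reduced expression can legitimately be shortened by $\T_t^r$ is what I expect to be the most technical step.
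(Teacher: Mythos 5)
There is a genuine gap, and it sits in the step you treat as routine rather than in the step you flag as hard. In your first case you assert that when re-expansion via Lemma~\ref{lem:power.of.Ts} is triggered, the $t$-count of any resulting term differs from that of $y$ by at most one, ``so $z\not\in S_n$ continues to hold.'' This is false: the normalized order relation reads $\T_t^{r}=\T_1+\xi_{t,1}\T_t+\dots+\xi_{t,r-1}\T_{t^{r-1}}$, and the $\T_1$-term carries coefficient $1$, so firing it drops the $t$-count by up to $r-1$, not by one. Concretely, if $y=t^{r-1}\sigma$ with $\sigma\in S_n$ (such $y\not\in S_n$, and nothing in your inductive hypothesis excludes such terms from the correction), then $\T_t\T_{\bm y}=\T_{\sigma}+\xi_{t,1}\T_{t\sigma}+\dots$ contains $\T_{\sigma}$ with $\sigma\in S_n$ and unit coefficient. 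Hence the target submodule $\sum_{y\not\in S_n,\,l(y)<l(w)}R\T_{\bm y}$ is \emph{not} stable under left multiplication by $\T_t$, and your induction cannot close as structured: mere membership in this span is too weak an invariant to propagate. Any correct argument must exploit the fact that the corrections arise in matched \emph{differences} whose dangerous low-$t$-count terms cancel in pairs --- exactly the phenomenon visible in the paper's own computation in Proposition~\ref{prop:s-dcoset.graph.Gr12}, where the diagonal terms $a_i\T_{t^ist^i}$ cancel between $L_+$ and $L_-$ while everything surviving retains positive $t$-count. On top of this, your second case rests entirely on the announced ``exchange-with-error'' lemma, which you do not prove; as stated it is essentially equivalent in strength to the proposition itself (apply it with $\bm w'$ any reduced word beginning with $s$), so the proposal reduces the statement to an unproven statement of the same difficulty.

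For comparison: the paper does not prove Proposition~\ref{prop:bremal97} at all --- it quotes it from Bremke--Malle --- but it reproduces precisely the two ingredients on which the cited proof runs, namely Lemma~\ref{lem:BM97:red_word} and Lemma~\ref{lem:BM97:red_word_diff}. That route avoids your first-letter case analysis entirely: every reduced word is \emph{weakly braid equivalent} to the normal form $t_{0,a_0}\dots t_{n-1,a_{n-1}}\sigma$, the genuine braid moves leave the Hecke element unchanged, and each application of the extra relation $s_1t^as_1t^b=t^bs_1t^as_1$ perturbs the element by the explicit paired corrections $\xi_s\sum_i\bigl(\T_t^{a+b-i}\T_{s_1}\T_{t^i}-\T_{t^i}\T_{s_1}\T_t^{a+b-i}\bigr)$ of Lemma~\ref{lem:BM97:red_word_diff}, each factor of which keeps a surviving $\T_{t^i}$ with $i\ge 1$ on one side, so positive $t$-weight is preserved through the cancellations. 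If you wish to salvage your plan, you would need to strengthen the inductive statement so that it remembers this difference structure (for instance, that the correction lies in the span of suitable pairs $\T_{\bm y}-\T_{\bm y'}$ with controlled $t$-weight) rather than bare membership in the span --- which is, in effect, what the cited proof does.
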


\subsection{Double cosets in complex reflection groups}\label{sec:d.cosets.in.cx.ref.gps}

An important component of what follows will be the behaviour of the span of certain double cosets within the Hecke algebra.

For $J\subset S$, the parabolic subgroup $W_J$ of a complex reflection group $W$ is generated by the elements of $J$ subject to the relevant relations from $W$.  Let $\H$ be the Hecke algebra of $W$ with reduced basis $B$.  Write $\H_J$ for the parabolic subalgebra of $\H$ corresponding to $W_J$, and write $\H_{W_JdW_J}$ for the submodule of $\H$ spanned over $R$ by the set $\{\T_{\bm w, B}\mid w\in W_JdW_J\}$.
It should be emphasized that $\H_{W_JdW_J}$ differs with respect to different bases of $\H$.

\begin{defn}\label{def:stablebasis}\label{def:stabledcoset}
  Let $B$ be a reduced basis for $\H=\H(W)$, let $W_J$ be a parabolic subgroup of $W$, and let $d\in W$.

  We say the double coset $W_JdW_J$ is \emph{stable with respect to $B$} if
    \[\H_J\T_{\bm d, B}\H_J\subseteq\H_{W_JdW_J}.\]

  We say the reduced basis $B$ is \emph{$W_J$-stable} if \emph{all} double cosets $W_JdW_J$ are stable with respect to $B$.  If $W_J=\l s\r$ for some $s\in S$, we abbreviate this to \emph{$s$-stable}.
\end{defn}

These definitions are trivial in the case $W$ is a Coxeter group as (i) all reduced bases $B$ for $\H$ are identical, (ii) all double cosets of parabolic subgroups are stable with respect to any $B$, and therefore (iii) all reduced bases (and there is only one) are $W_J$-stable for any $J\subseteq S$.  We will see examples of such double cosets and such bases in later sections (Sections \ref{sec:g4.hecke.bases} and \ref{sec:gr1n:reduced.exprs}).  In general, $W_J$-stable bases seem to be the exception rather than the rule.

In finite Coxeter groups one has some additional useful properties of double cosets.  Each double coset contains a unique element $d$ of minimal length, called a \emph{distinguished} element.  In this case every element $w$ of the double coset $W_JdW_J$ can be written $w=w_1dw_2$ for some $w_i\in W_J$ and with $l(w)=l(d)+l(w_1)+l(w_2)$.  There are tighter facts about this due to Howlett \cite[Prop.~(2.7.5)]{Carter85}.  The double cosets are then neatly divided into those that centralize $W_J$ and those that don't.  This partition carries over to the Hecke algebras of finite Coxeter groups: if a $W_J$-$W_J$ double coset representative $d$ is in the centralizer $Z_W(W_J)$ of $W_J$ in $W$, then $\T_{d}\in Z_\H(\H_J)$.

In the generality of complex reflection groups, it is no longer the case that double cosets always have unique minimal length elements.  Even when they do, it is not always the case that if a (unique) minimal length element $d\in Z_W(W_J)$ then $\T_{\bm d,B}\in Z_\H(\H_J)$.  As far as I am aware no results analogous to those of Howlett are known for complex reflection groups.

\subsection{The $\H$-double coset graph}\label{sec:bg:H.dcoset.graph}

Let $W$ be a complex reflection group with Hecke algebra $\H$ defined over $R$ with reduced basis $B$.  We define a directed graph $\mathcal G$ whose vertices are the double cosets $W_JdW_J$ and with arrows
\[W_JdW_J\longrightarrow W_Jd'W_J\]
\text{when }
\[\H_J\T_{\bm d,B}\H_J\cap\H_{W_Jd'W_J}\neq\emptyset.\]
A \emph{terminal} vertex $W_JdW_J$ is one for which there is no $W_Jd'W_J\neq W_JdW_J$ such that $W_JdW_J\to W_Jd'W_J$.

Note:
\begin{itemize}
    \item For each $d\in W$, $W_JdW_J\to W_JdW_J$.
    \item A double coset is stable if and only if it is a terminal vertex of $\mathcal G$.
    \item In a finite Coxeter group all vertices are terminal as all double cosets are stable.
\end{itemize}
In this paper we will focus on the case $|J|=1$.  %[Should we define this just for $|J|=1$?????]

The $\H$-double coset graph will be a useful way to visualize the relationships among double cosets when it comes to determining relationships among coefficients in an element of the centralizer $Z_\H(\H_J)$.  This is because it will be enough to consider, for each double coset $W_JdW_J$, relationships between coefficients $r_{\bm w,B}$ in
\[\sum_{\substack{w\in W_Jd'W_J,\\W_JdW_J\to W_Jd'W_J}}r_{\bm w,B}\T_{\bm w,B}.\]

\section{The centralizer of a generator: additive double cosets}\label{sec:additive.d.cosets.coeff.rels}

In the current section we focus on the centralizer of a generator in the Hecke algebra of a complex reflection group: the case $J=\{s\}\subseteq S$.  The reason for this is that we hope that by understanding bases for centralizers of generators we may build bases for centralizers of larger subalgebras (including the centre).  This is the philosophy used for Coxeter groups in \cite{Fmb,Fcent,Fcent2}, and we apply it in some cases in Sections~\ref{sec:g4} and~\ref{sec:Gr1n}.%later in this paper.

Let $\H$ be the Hecke algebra of a complex reflection group $W$, with reduced basis $B$.

\subsection{Additivity and stability}

\begin{defn}\label{def:additive} An $\l s\r$-$\l s\r$ double coset is said to be \emph{additive} if it contains a minimal length representative $d$ with the property that $l(s^ids^j)=l(d)+i+j$ for $0\le i,j\le m_s-1$.

An $\l s\r$-$\l s\r$ double coset $\l s\r d\l s\r$ is said to be \emph{centralizing} if for all $w\in \l s\r d\l s\r$, $\T_s\T_{\bm w,B}=\T_{\bm w,B}\T_s$.
\end{defn}

The additivity as defined is not a useful definition for larger parabolic subgroups $W_J$ for $J\subseteq S$ and $|J|>1$.  In fact, in this case even Coxeter groups do not have many additive double cosets (see the remarks above about the result of Howlett).

Unlike the situation for finite Coxeter groups, these are not the only possibilities for $\l s\r$-$\l s\r$ double cosets in $W$.  For instance, in the case of complex reflection groups we do not have that $d\in Z_W(s)\implies \T_{\bm d,B}\in Z_\H(s)$.  Also, if $d\not\in Z_W(W_J)$ it does not imply that $\l s\r d\l s\r$ is additive.

However, despite the extra classes of $\l s\r$-$\l s\r$ double cosets --- those that are neither additive nor centralizing --- the additive double cosets nevertheless form a significant case.  For instance, in the infinite family $G(r,1,n)$ there are a total of $r^{n-2}(n-1+r)(n-1)!$  $\l t\r$-$\l t\r$ double cosets, of which $r^{n-2}(n-1)(n-1)!$ are additive~\cite{FM:dcosets}.

If the double coset $\l s\r d\l s\r$ is additive, then we have an explicit listing of its elements
\begin{equation}\l s\r d\l s\r=\{s^ids^j\mid 0\le i,j<m_s\}\label{eq:additive_d_cosets}\end{equation}
and of course by definition of additivity, $l(s^ids^j)=l(d)+i+j$ for each $0\le i,j\le m_s-1$.  Thus such a double coset has exactly $m_s^2$ elements.

If the double coset $\l s\r d\l s\r$ is centralizing then $d\in Z_W(s)$ and
\[\l s\r d\l s\r=\{ds^i\mid 0\le i< m_s\}.\]

\begin{lem}\label{lem:additive_implies_stable}
  Let $\H$ be the Hecke algebra of the complex reflection group $G$, with reduced basis $B$, and let $\l s\r d\l s\r$ be a double coset in $G$ for $s\in S$.

  If $\l s\r d\l s\r$ is additive and
  $\T_{s^i}\T_{\bm d,B}\T_{s^j}=\T_{\bm{s^ids^j},B}$ for any $1\le i,j\le m_s-1$,
  then $\l s\r d\l s\r$ is stable with respect to $B$.
\end{lem}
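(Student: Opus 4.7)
The plan is to unpack stability into a finite spanning-set check and then apply the hypothesis elementwise.

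First, by the normalized order relation~\eqref{powerrel:xi} together with Lemma~\ref{lem:power.of.Ts}, the parabolic subalgebra $\H_{\langle s\rangle}$ is $R$-spanned by $\{\T_{s^i}\mid 0\le i\le m_s-1\}$, since every higher power of $\T_s$ reduces inductively to an $R$-combination of these. Consequently $\H_{\langle s\rangle}\T_{\bm d,B}\H_{\langle s\rangle}$ is the $R$-span of the $m_s^2$ products $\T_{s^i}\T_{\bm d,B}\T_{s^j}$ with $0\le i,j\le m_s-1$, and stability reduces to placing each such product inside $\H_{\langle s\rangle d\langle s\rangle}$.

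For the interior indices $1\le i,j\le m_s-1$, the hypothesis applies directly: each product equals the basis element $\T_{\bm{s^ids^j},B}$, and by additivity~\eqref{eq:additive_d_cosets}, $s^ids^j\in\langle s\rangle d\langle s\rangle$, so this basis element lies in $\H_{\langle s\rangle d\langle s\rangle}$. The case $i=j=0$ is trivial because $\T_{\bm d,B}$ is itself a basis element for $d$.

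For the remaining boundary cases $i=0,\,j\ge 1$ (and symmetrically $i\ge 1,\,j=0$), I would compute $\T_s^{m_s}\T_{\bm d,B}\T_{s^j}$ in two ways: expanding $\T_s^{m_s}=\sum_k\xi_k\T_{s^k}$ via~\eqref{powerrel:xi} and invoking the interior hypothesis on the $k\ge 1$ summands gives
\[
\T_s^{m_s}\T_{\bm d,B}\T_{s^j}=\xi_0\T_{\bm d,B}\T_{s^j}+\sum_{k=1}^{m_s-1}\xi_k\T_{\bm{s^kds^j},B},
\]
while regrouping as $\T_s\cdot(\T_s^{m_s-1}\T_{\bm d,B}\T_{s^j})=\T_s\,\T_{\bm{s^{m_s-1}ds^j},B}$ by the hypothesis supplies a second expression for the same element. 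Comparing the two isolates $\xi_0\T_{\bm d,B}\T_{s^j}$ as a combination of elements already known to lie in $\H_{\langle s\rangle d\langle s\rangle}$ plus a single $\T_s$-translate of a hypothesis basis element.

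The main obstacle I expect is controlling this remaining $\T_s$-translate: one must show $\T_s\,\T_{\bm{s^{m_s-1}ds^j},B}$ is itself an element of $\H_{\langle s\rangle d\langle s\rangle}$, so that the isolated term really is in the span. I would rely on additivity---which guarantees the $m_s^2$ distinct elements $s^ids^j$ with lengths $l(d)+i+j$---to prevent any ``length escape'' outside the double coset when $\T_s$ acts, combined with the symmetry between left and right $\T_s$-multiplication, to close the recursion. The symmetric boundary $\T_{s^i}\T_{\bm d,B}$ then follows by the mirror argument applied on the right factor, completing the proof.
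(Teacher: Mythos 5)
Your first two steps match the paper's intended (one-line) argument exactly: $\H_{\l s\r}$ is spanned by $\{\T_{s^i}\mid 0\le i\le m_s-1\}$, so stability reduces to placing the $m_s^2$ products $\T_{s^i}\T_{\bm d,B}\T_{s^j}$ in $\H_{\l s\r d\l s\r}$, and for interior indices the hypothesis together with the listing \eqref{eq:additive_d_cosets} does this immediately. The paper's proof (``clear from the definitions and from \eqref{eq:additive_d_cosets}'') is precisely this, read with the hypothesis covering \emph{all} $0\le i,j\le m_s-1$ — that this is the intended reading is confirmed by the later proofs, e.g.\ Lemma~\ref{symmetry}, which freely uses identities such as $\T_s\T_{\bm{s^{n-1}d},B}=\T_s^{n}\T_{\bm d,B}$, i.e.\ the boundary case $j=0$. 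You are right that the literal statement ``$1\le i,j\le m_s-1$'' leaves the boundary products uncovered; that is a fair catch about the phrasing.

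However, your attempt to \emph{derive} the boundary cases from the interior ones has a genuine and, as set up, unfixable gap: the two-way computation is circular. Your second expression $\T_s\,\T_{\bm{s^{m_s-1}ds^j},B}$ equals, by the interior hypothesis, $\T_s^{m_s}\T_{\bm d,B}\T_{s^j}$ — which is exactly the element you expanded the first way; comparing the two expansions yields a tautology and isolates nothing, and ``controlling the remaining $\T_s$-translate'' is literally the statement you are trying to prove, since re-expanding $\T_s^{m_s}$ reproduces the unknown term $\xi_0\T_{\bm d,B}\T_{s^j}$. Structurally, the interior hypothesis fixes only the $(m_s-1)^2$ interior products and places no constraint on $\T_{\bm d,B}\T_{s^j}$ beyond its definition in $\H$: no recursion among these identities can add information. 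Nor can additivity close the gap. Whether $\T_{\bm d,B}\T_{s^j}$ lies in $\H_{\l s\r d\l s\r}$ depends on the choice $\eta_B(ds^j)$: by additivity the concatenation $\bm d s^j$ is a reduced word for $ds^j$, but if $B$ selects a reduced expression not braid-equivalent to it, the difference $\T_{\bm d,B}\T_{s^j}-\T_{\bm{ds^j},B}$ is in general a combination of terms of shorter length which need not lie in the coset span (cf.\ Proposition~\ref{prop:bremal97}); indeed the instability phenomena in Section~\ref{sec:gr1n:reduced.exprs} all occur for length-additive products, so a purely group-theoretic length condition cannot prevent ``escape.'' The correct repair is not a recursion but to read (or restate) the hypothesis with $0\le i,j\le m_s-1$, after which the lemma is immediate, exactly as the paper treats it.
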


\begin{proof}
  This is clear from the definitions and from \eqref{eq:additive_d_cosets}.
\end{proof}

Given a fixed reduced basis for the Hecke algebra, the basic tactic is to find relations among the coefficients of these basis elements in an element of the centralizer.  That is, if $B$ is a reduced basis for $\H$, then $h\in Z_\H(\T_s)$ may be written as an $R$-linear combination $h=\sum_{w\in W}r_{\bm w,B}\T_{\bm w,B}$.  Our question is, what relationships among coefficients $r_{\bm w,B}$ are necessary or sufficient for $h$ to centralize $\T_s$?

\subsection{Coefficient relations}%\label{sec:additive.d.cosets.coeff.rels}

In this subsection we provide some relationships in the case that the $\l s\r$-$\l s\r$ double coset is stable and additive.  We start by proving a Lemma that gives a symmetry feature of centralizer elements in common with those of Hecke algebras of Coxeter groups.

\begin{lem}\label{symmetry} %Suppose that $\H$ satisfies the freeness assumption.
Suppose that $\H$ is a Hecke algebra of a complex reflection group with basis $B$ satisfying $\T_{s^i}\T_{\bm d,B}\T_{s^j}=\T_{\bm{s^ids^j},B}$ for any $1\le i,j\le m_s-1$, where $\l s\r d\l s\r$ is additive with $d$ its minimal length representative.  Write $n=m_s$.

If $h=\sum_{w\in W} r_{\bm w,B}\T_{\bm w,B}\in Z_\H(\T_s)$, then
\begin{equation*}
r_{\bm{s^ids^j},B}=r_{\bm{s^jds^i},B}
\end{equation*}
for any $0\le i,j\le n-1$.
\end{lem}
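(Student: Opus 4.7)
The plan is to project the centralizer identity $\T_s h = h\T_s$ onto the $R$-submodule $M := \H_{\l s\r d\l s\r}$ spanned by the $n^2$ basis elements $\T_{\bm{s^ids^j},B}$ ($0\le i,j\le n-1$), and then extract linear relations among the coefficients $r_{ij}:=r_{\bm{s^ids^j},B}$. By the hypothesis together with Lemma~\ref{lem:additive_implies_stable} the double coset is stable, so $\T_s M \subseteq M$ and $M\T_s\subseteq M$. Writing $h = h_M + h_{\bar M}$ one sees that $\T_s h_M - h_M\T_s \in M$, and the main obstacle is to show that the complementary term $\T_s h_{\bar M} - h_{\bar M}\T_s$ contributes nothing to $M$, so that $\T_s h_M = h_M\T_s$ holds as an identity inside $M$. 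Group-theoretically this is clean, since for $w\notin\l s\r d\l s\r$ the elements $sw$ and $ws$ again lie outside $\l s\r d\l s\r$; the algebraic content is that re-expressing each Hecke product $\T_s\T_{\bm w,B}$ and $\T_{\bm w,B}\T_s$ in the basis $B$ should introduce no basis elements of $M$. This rests on the $B$-compatibility imposed in the hypothesis together with the stability of $\l s\r d\l s\r$.

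Granting this reduction, I would factor $\T_{\bm{s^ids^j},B} = \T_{s^i}\T_{\bm d,B}\T_{s^j}$ and invoke the power relation \eqref{powerrel:xi} for $\T_s^n$. Matching the coefficient of each $\T_{\bm{s^kds^j},B}$ on the two sides of $\T_s h_M = h_M \T_s$ yields the linear system
\[r_{k-1,j} + r_{n-1,j}\xi_k \;=\; r_{k,j-1} + r_{k,n-1}\xi_j \qquad (0\le k,j\le n-1),\]
with boundary convention $r_{-1,j}=r_{k,-1}=0$ and with $\xi_0=1$.

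From this system the symmetry $r_{ij}=r_{ji}$ propagates outward from $(0,0)$. The base case $(k,j)=(0,0)$ already yields $r_{n-1,0}=r_{0,n-1}$. Pairing the equation at $(k,0)$ with the one at $(0,k)$ produces parallel closed forms
\[r_{k,n-1} = r_{k-1,0} + r_{n-1,0}\xi_k, \qquad r_{n-1,k} = r_{0,k-1} + r_{n-1,0}\xi_k,\]
so the symmetry $r_{k,n-1}=r_{n-1,k}$ reduces to the earlier symmetry $r_{k-1,0}=r_{0,k-1}$. Substituting these boundary expressions back into the general interior equation at $(k,j)$ and comparing with the swapped equation at $(j,k)$ then forces the remaining antisymmetric differences $r_{ij}-r_{ji}$ to vanish by induction. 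The technical heart of the argument lies in the first step; everything afterward is bookkeeping in the power relation.
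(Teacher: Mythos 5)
Your proposal is correct and follows essentially the paper's own argument: the restriction to the span of $\l s\r d\l s\r$ is justified exactly as in the paper (by appeal to stability via Lemma~\ref{lem:additive_implies_stable}), and your linear system $r_{k-1,j}+\xi_k r_{n-1,j}=r_{k,j-1}+\xi_j r_{k,n-1}$ is precisely the family of coefficient comparisons between $\T_s h$ and $h\T_s$ that the paper carries out. Your outward propagation from $(0,0)$ --- the base case $r_{n-1,0}=r_{0,n-1}$, then pairing the equations at $(k,0)$ and $(0,k)$ for the boundary symmetry, then comparing $(k,j)$ with $(j,k)$ for the interior --- is the paper's interleaved induction on its statements (I) and (II), just written in compact index notation.
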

\begin{proof}
We will prove this by induction, treating two separate cases simultaneously.  The general approach is to check the coefficients of an element $\T_{\bm w,B}$ in both $\T_sh$ and $h\T_s$.  These should of course be equal, since $h$ commutes with $\T_s$.

An important part of the approach is to restrict our attention to the $\l s\r$-$\l s\r$ double coset containing $d$.  This is possible because the double coset is stable with respect to the basis $B$ of $\H$ (Lemma~\ref{lem:additive_implies_stable}).  This restriction allows us to consider instead of the whole of $h=\sum_{w\in W}r_{\bm w,B}\T_{\bm w,B}$ just the sum
\[\sum_{w\in \l s\r d\l s\r}r_{\bm w,B}\T_{\bm w,B}\]
%\substack{w\in \l s\r d\l s\r\\ \T_w\in B}}
of terms from the double coset, since the product of $\T_s$ with any of these terms is a linear combination of terms from within the same double coset (see the definition of a stable basis Definition \ref{def:stablebasis}).  This idea was used in the symmetric group case by R.~Dipper and G.~James \cite[(2.4)]{DJ87}, and in the finite Coxeter group case in \cite[(3.1)]{Fmb}.

To simplify notation in the remainder of this proof, we will omit reference to the basis $B$ in the subscripts, writing $r_{\bm w}$ for $r_{\bm w, B}$ and $\T_{\bm w}$ for $\T_{\bm w,B}$.

In what will become the first step in our induction, we will check the coefficient of $\T_{\bm d}$ in both $\T_sh$ and $h\T_s$.  Of course, the coefficient must be the same in each as $h\in Z_\H(\T_s)$.  In $\T_sh$, $\T_{\bm d}$ will appear only in the product $\T_s\T_{\bm{s^{n-1}d}}$, and there with coefficient $r_{\bm{s^{n-1}d}}$.  In $h\T_s$ similarly, $\T_{\bm d}$ will appear only in the product $\T_{\bm{ds^{n-1}}}\T_s$, and there with coefficient $r_{\bm{ds^{n-1}}}$.
Thus, \begin{equation}\label{dn-1}
r_{\bm{ds^{n-1}}}=r_{\bm{s^{n-1}d}}.
\end{equation}

Now suppose for the purposes of induction on $k$ that for $0\le i,j\le k$, we have
\begin{align}
r_{\bm{s^ids^{n-1}}}&=r_{\bm{s^{n-1}ds^i}}\tag{I}\label{hypI}\\
\intertext{and}
r_{\bm{s^ids^j}}&=r_{\bm{s^jds^i}}\tag{II}\label{hypII}
\end{align}
When $k=0$, \eqref{hypI} is the case shown above in \eqref{dn-1}, and (II) is trivial.  We need to show (I) and (II) for when $i$ and $j$ are allowed to be $k+1$.

In the case of (I), we need to show $r_{\bm{s^{k+1}ds^{n-1}}}=r_{\bm{s^{n-1}ds^{k+1}}}$. Consider the coefficient of $\T_{\bm{s^{k+1}d}}$ in $\T_sh$ and $h\T_s$.  In $\T_sh$, $\T_{\bm{s^{k+1}d}}$ appears in the products $\T_s\T_{\bm{s^kd}}$ and $\T_s\T_{\bm{s^{n-1}d}}$, giving the coefficient $r_{\bm{s^kd}}+\xi_{k+1}r_{\bm{s^{n-1}d}}$.
In $h\T_s$, it appears only in $\T_{\bm{s^{k+1}ds^{n-1}}}\T_s$, giving the coefficient $r_{\bm{s^{k+1}ds^{n-1}}}$.  Equating the two gives
\begin{subequations}
\begin{align}\label{k+1d}
r_{\bm{s^{k+1}ds^{n-1}}}&=r_{\bm{s^kd}}+\xi_{k+1}r_{\bm{s^{n-1}d}}.\\
\intertext{On the other hand, if we consider the coefficient of $\T_{\bm{ds^{k+1}}}$ in both $\T_sh$ and $h\T_s$, we obtain the symmetrical equation:}
r_{\bm{s^{n-1}ds^{k+1}}}&=r_{\bm{ds^k}}+\xi_{k+1}r_{\bm{ds^{n-1}}}.
\end{align}
\end{subequations}
Now $r_{\bm{ds^{n-1}}}=r_{\bm{s^{n-1}d}}$ by \eqref{dn-1}, and $r_{\bm{ds^k}}=r_{\bm{s^kd}}$ by induction hypothesis (II), so we have
\begin{equation}\label{k+1dn-1}
r_{\bm{s^{k+1}ds^{n-1}}}=r_{\bm{s^{n-1}ds^{k+1}}},
\end{equation}
which proves (I) in the $k+1$ case.

In the case of (II), we need to show that if one of $i$ or $j$ are set to be $k+1$, we still have $r_{\bm{s^ids^j}}=r_{\bm{s^jds^i}}$ (if both are $k+1$ the statement is trivial).  That is, we need to show $r_{\bm{s^ids^{k+1}}}=r_{\bm{s^{k+1}ds^i}}$ where $i\le k$.

Consider the coefficient of $\T_{\bm{s^{i+1}{d}s^{k+1}}}$ in $\T_sh$ and $h\T_s$.  In $\T_sh$, it appears in the products $\T_s\T_{\bm{s^ids^{k+1}}}$ and $\T_s\T_{\bm{s^{n-1}ds^{k+1}}}$, and in $h\T_s$ it appears in $\T_{\bm{s^{i+1}ds^k}}\T_s$ and $\T_{\bm{s^{i+1}ds^{n-1}}}\T_s$, resulting in the equation:
\begin{subequations}
\begin{align}
r_{\bm{s^ids^{k+1}}}+\xi_{i+1}r_{\bm{s^{n-1}ds^{k+1}}}&=r_{\bm{s^{i+1}ds^k}}+\xi_{k+1}r_{\bm{s^{i+1}ds^{n-1}}}.\label{eq:i+1dk+1coeff}\\
\intertext{Now if $i=k$ we have $\xi_{i+1}r_{\bm{s^{n-1}ds^{k+1}}}=\xi_{k+1}r_{\bm{s^{i+1}ds^{n-1}}}$ by (I), and \eqref{eq:i+1dk+1coeff} then yields $r_{\bm{s^kds^{k+1}}}=r_{\bm{s^{k+1}ds^k}}$.  Thus we may suppose $i<k$.  Now consider the coefficient of $\T_{\bm{s^{i+1}ds^{k+1}}}$ in $\T_sh$ and $h\T_s$.  We obtain a relation symmetric to the above:}
r_{\bm{s^{k+1}ds^i}}+\xi_{i+1}r_{\bm{s^{k+1}ds^{n-1}}}&=r_{\bm{s^kds^{i+1}}}+\xi_{k+1}r_{\bm{s^{n-1}ds^{i+1}}}.
\end{align}
\end{subequations}
From (I) we have $r_{\bm{s^{n-1}ds^{k+1}}}=r_{\bm{s^{k+1}ds^{n-1}}}$ and $r_{\bm{s^{i+1}ds^{n-1}}}=r_{\bm{s^{n-1}ds^{i+1}}}$, and from our induction hypothesis (II) we have $r_{\bm{s^{i+1}ds^k}}=r_{\bm{s^kds^{i+1}}}$ (remembering we have been able to assume $i<k$).  Thus we can conclude
\begin{equation}
r_{\bm{s^ids^{k+1}}}=r_{\bm{s^{k+1}ds^i}}\notag
\end{equation}
and therefore by induction that for $0\le i,j\le n-1$ we have $r_{\bm{s^ids^j}}=r_{\bm{s^jds^i}}$.
\end{proof}

The following theorem allows one to determine exactly the coefficients of terms in an element of the centralizer of $\T_s$ when the relevant double-coset is additive (see Definition \ref{def:additive}), and when the algebra has an $s$-stable basis.

\begin{thm}\label{characterized}
    Let $\H$ be the Hecke algebra of the complex reflection group $W$ considered over $R$.
    Suppose that $\H$ is free as an $R$-module with reduced basis $B=\{\T_{\bm{w},B}\mid w\in W\}$, and consider the double coset $\l s\r d\l s\r$, for $s$ one of the standard generators of $W$ with order $m_s=n$.  For simplicity and without ambiguity, write $\xi_i$ for $\xi_{s,i}$.

    If $\l s\r d\l s\r$ is additive with $d$ the unique distinguished element and $B$ satisfies $\T_{s^i}\T_{\bm d,B}\T_{s^j}=\T_{\bm{s^ids^j},B}$ for any $1\le i,j\le m_s-1$, then the sum $\displaystyle\sum_{0\le i,j<n}r_{\bm{s^ids^j},B}\T_{\bm{s^ids^j},B}$
    with $r_{\bm{s^ids^j},B}\in R$ is in $Z_\H(\T_s)$ if and only if the coefficients satisfy the following:
        \begin{equation}\label{symmetry2}r_{\bm{s^ids^j},B}=r_{\bm{s^jds^i},B}\quad\text{with $0\le i,j\le n-1$}\end{equation}
    and for $1\le i,j\le n-1$,
        \begin{equation}\label{coeffrels}
        r_{\bm{s^ids^j},B}=
         \begin{cases}
\displaystyle r_{\bm{ds^{i+j}},B}+\sum_{k=0}^{i-1}\left(\xi_{i-k}r_{\bm{ds^{j+k}},B}-\xi_{j+k+1}r_{\bm{ds^{i-k-1}},B}\right)&i+j<n,\\ \\
\displaystyle \begin{split}r_{\bm{ds^{i+j-n}},B}+&\xi_{i+j-n+1}r_{\bm{ds^{n-1}},B}+ \\ &\sum_{k=0}^{n-j-2}\left(\xi_{i-k}r_{\bm{ds^{j+k}},B}-\xi_{j+k+1}r_{\bm{ds^{i-k-1}},B}\right)\end{split}
&\text{otherwise.}
         \end{cases}
        \end{equation}
\end{thm}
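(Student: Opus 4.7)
The symmetry relation \eqref{symmetry2} is already established by Lemma \ref{symmetry}, so the task reduces to deriving the explicit coefficient formulas \eqref{coeffrels} from $\T_s h = h\T_s$, and conversely verifying that elements whose coefficients satisfy both \eqref{symmetry2} and \eqref{coeffrels} lie in $Z_\H(\T_s)$.

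My plan is to compare the coefficient of each basis element $\T_{\bm{s^ids^j},B}$ on the two sides of $\T_s h = h\T_s$. Because $\langle s\rangle d\langle s\rangle$ is additive and the hypothesis $\T_{s^i}\T_{\bm d,B}\T_{s^j} = \T_{\bm{s^ids^j},B}$ holds for $1\le i,j\le n-1$, Lemma \ref{lem:additive_implies_stable} lets me restrict attention to $\H_{\langle s\rangle d\langle s\rangle}$. For $0\le i\le n-2$ we have $\T_s\T_{\bm{s^ids^j},B} = \T_{\bm{s^{i+1}ds^j},B}$, while for $i=n-1$ the normalized order relation \eqref{powerrel:xi} gives $\T_s\T_{\bm{s^{n-1}ds^j},B} = \sum_{k=0}^{n-1}\xi_k \T_{\bm{s^kds^j},B}$, with analogous formulas on the right. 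Equating coefficients of $\T_{\bm{s^ids^j},B}$ then yields, for $1\le i,j\le n-1$, the recurrence
\[
r_{\bm{s^{i-1}ds^j},B} + \xi_i r_{\bm{s^{n-1}ds^j},B} = r_{\bm{s^ids^{j-1}},B} + \xi_j r_{\bm{s^ids^{n-1}},B},
\]
together with the $i=0$ boundary identity
\[
r_{\bm{s^{n-1}ds^j},B} = r_{\bm{ds^{j-1}},B} + \xi_j r_{\bm{ds^{n-1}},B}
\]
valid for $1\le j\le n-1$; by \eqref{symmetry2} this also evaluates $r_{\bm{s^jds^{n-1}},B}$.

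The core calculation is to iterate the recurrence in the rewritten form $r_{\bm{s^ids^j},B} = r_{\bm{s^{i-1}ds^{j+1}},B} + \xi_i r_{\bm{s^{n-1}ds^{j+1}},B} - \xi_{j+1} r_{\bm{s^ids^{n-1}},B}$. In Case $i+j<n$, applying the step $(i,j)\to(i-1,j+1)$ exactly $i$ times reaches $r_{\bm{ds^{i+j}},B}$ directly. In Case $i+j\ge n$, the same step applied $n-1-j$ times halts at $r_{\bm{s^{i+j-n+1}ds^{n-1}},B}$, whose value the boundary identity (combined with symmetry) supplies as $r_{\bm{ds^{i+j-n}},B} + \xi_{i+j-n+1} r_{\bm{ds^{n-1}},B}$. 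In either case, every accumulated $r_{\bm{s^{n-1}ds^{j+k+1}},B}$ and $r_{\bm{s^{i-k}ds^{n-1}},B}$ is itself expanded via the boundary identity, and the cross terms in $r_{\bm{ds^{n-1}},B}$ cancel in pairs since $\xi_{i-k}\xi_{j+k+1} - \xi_{j+k+1}\xi_{i-k} = 0$. What survives is precisely \eqref{coeffrels}. The converse direction follows by reversing the derivation: any coefficients obeying \eqref{symmetry2} and \eqref{coeffrels} satisfy the recurrence by construction, hence give an element commuting with $\T_s$.

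The main obstacle I anticipate is the index bookkeeping in Case 2, where I must verify that $n-2-j\le i-1$ (equivalent to the hypothesis $i+j\ge n$) so that the boundary identity applies to every $r_{\bm{s^{i-l}ds^{n-1}},B}$ encountered, and that $r_{\bm{s^{i+j-n+1}ds^{n-1}},B}$ can indeed be read off by symmetry from the boundary formula (requiring $i+j-n+1\ge 1$, again the Case 2 hypothesis). Once these range constraints are confirmed, the algebraic cancellation of $r_{\bm{ds^{n-1}},B}$ terms is essentially forced, and the closed-form expressions \eqref{coeffrels} emerge.
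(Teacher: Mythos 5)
Your forward direction reproduces the paper's argument essentially step for step: restriction to $\H_{\l s\r d\l s\r}$ via stability (Lemma~\ref{lem:additive_implies_stable}), the boundary identity obtained by comparing coefficients of $\T_{\bm{ds^j}}$ (the paper's \eqref{n-1dj}), the rewritten one-step recurrence (the paper's \eqref{idj}), the iteration \eqref{eq:idj.induction} with the case split $m=i$ versus $m=n-1-j$, and the terminal expansion of $r_{\bm{s^{i+j-n+1}ds^{n-1}}}$ by symmetry plus the boundary identity. One small slip there: $n-2-j\le i-1$ is equivalent to $i+j\ge n-1$, not to $i+j\ge n$; this is harmless since the Case 2 hypothesis implies it.

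The genuine gap is your converse. ``Reversing the derivation'' is not automatic: the forward chain shows the closed forms are \emph{necessary} consequences of a selection of coefficient-matching equations, whereas sufficiency requires that the assignment defined by \eqref{symmetry2} and \eqref{coeffrels} satisfy \emph{every} equation obtained by comparing the coefficient of each $\T_{\bm{s^ads^b}}$ in $\T_sh$ and $h\T_s$ --- including the boundary equations and the instances that straddle the two cases of \eqref{coeffrels}. These do not hold ``by construction.'' For instance, setting $i=n-1$ in the second case of \eqref{coeffrels} is consistent with your boundary identity only because $\sum_{k=0}^{n-j-2}\left(\xi_{n-1-k}r_{\bm{ds^{j+k}}}-\xi_{j+k+1}r_{\bm{ds^{n-2-k}}}\right)=0$, which holds since the substitution $k\mapsto n-j-2-k$ carries one half of the sum onto the other --- exactly the identity the paper flags parenthetically in its converse. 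Likewise, the recurrence instances where $(i,j-1)$ falls under the first case of \eqref{coeffrels} but $(i,n-1)$ under the second, or where both indices lie in the second case, require the comparison of the two multi-line expressions \eqref{eq:hTs.coeff} and \eqref{eq:Tsh.coeff}, which the paper itself describes as needing ``a careful check.'' Your cancellation remark $\xi_{i-k}\xi_{j+k+1}-\xi_{j+k+1}\xi_{i-k}=0$ addresses a different and easier cancellation, internal to one iteration step of the forward direction, and does not supply these verifications. Nor can a dimension count substitute: the forward direction shows the map sending a centralizing element to its tuple of coefficients $(r_{\bm{ds^j}})_{0\le j<n}$ is injective, but surjectivity --- that every choice of these free parameters actually yields an element commuting with $\T_s$ --- \emph{is} the converse, and the paper spends roughly half its proof verifying it directly.
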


\begin{proof}
We begin by supposing that $h\in Z_\H(\T_s)$, and show that the given relations hold.  The symmetry statement  \eqref{symmetry2} has already been shown in Lemma~\ref{symmetry}. As with the proof of Lemma~\ref{symmetry}, we drop reference to the basis $B$ in the subscripts for the sake of readability.

Firstly, we deal with the case where $i$ or $j$ is $n-1$.  By Lemma \ref{symmetry} it suffices to prove the case for just one, say $i=n-1$. Consider the coefficients of $\T_{\bm{ds^j}}$ in $\T_sh$ and in $h\T_s$.  They must be equal since $h$ commutes with $\T_s$.  The only occurrence of $\T_{\bm{ds^j}}$ in $\T_sh$ is in the product $\T_s\T_{\bm{s^{n-1}ds^j}}$, and has coefficient $r_{\bm{s^{n-1}ds^j}}$.  The only occurrences of $\T_{\bm{ds^j}}$ in $h\T_s$ are in the products $\T_{\bm{ds^{j-1}}}\T_s$ and $\T_{\bm{ds^{n-1}}}\T_s$, and so it has coefficient $r_{\bm{ds^{j-1}}}+\xi_jr_{\bm{ds^{n-1}}}$.  Thus,
\begin{equation}\label{n-1dj}
r_{\bm{s^{n-1}ds^j}}=r_{\bm{ds^{j-1}}}+\xi_jr_{\bm{ds^{n-1}}},
\end{equation}
and this gives the case when $i=n-1$.

Now we need to deal with the case where $i$ and $j$ are neither $0$ nor $n-1$.  We will show that $r_{\bm{s^ids^j}}$ may be written as a linear combination of $r_{\bm{s^{i-1}ds^{j+1}}}$, $r_{\bm{s^{n-1}ds^{j+1}}}$ and $r_{\bm{s^ids^{n-1}}}$.  A sequence of such steps (re-writing $r_{\bm{s^{i-1}ds^{j+1}}}$ repeatedly) will complete the result.

Compare the coefficient of $\T_{\bm{s^ids^{j+1}}}$ in $\T_sh$ and $h\T_s$.  In $\T_sh$ it occurs in terms $\T_s\T_{\bm{s^{i-1}ds^{j+1}}}$ and $\T_s\T_{\bm{s^{n-1}ds^{j+1}}}$, giving a coefficient of $r_{\bm{s^{i-1}ds^{j+1}}}+\xi_ir_{\bm{s^{n-1}ds^{j+1}}}$.  In $h\T_s$ it occurs in terms $\T_{\bm{s^i{d}s^j}}\T_s$ and $\T_{\bm{s^i{d}s^{n-1}}}\T_s$, giving a coefficient of $r_{\bm{s^ids^j}}+\xi_{j+1}r_{\bm{s^ids^{n-1}}}$.  Thus we have
\begin{equation*}
r_{\bm{s^{i-1}ds^{j+1}}}+\xi_ir_{\bm{s^{n-1}ds^{j+1}}}=r_{\bm{s^ids^j}}+\xi_{j+1}r_{\bm{s^ids^{n-1}}}.
\end{equation*}
This we can re-write (using \eqref{n-1dj}) to give
\begin{align}\label{idj}
r_{\bm{s^ids^j}}  &=r_{\bm{s^{i-1}ds^{j+1}}}+(\xi_ir_{\bm{s^{n-1}ds^{j+1}}}-\xi_{j+1}r_{\bm{s^ids^{n-1}}})\notag\\
&=r_{\bm{s^{i-1}ds^{j+1}}}+\xi_i(r_{\bm{ds^j}}+\xi_{j+1}r_{\bm{ds^{n-1}}})-\xi_{j+1}(r_{\bm{ds^{i-1}}}+\xi_ir_{\bm{ds^{n-1}}})\notag\\
&=r_{\bm{s^{i-1}ds^{j+1}}}+\xi_ir_{\bm{ds^j}}-\xi_{j+1}r_{\bm{ds^{i-1}}}.
\end{align}

An elementary induction using \eqref{idj} as the base step then shows that for $1\le m\le \min(i,n-j-1)$ we have
\begin{equation}\label{eq:idj.induction}
r_{\bm{s^ids^j}}=r_{\bm{s^{i-m}ds^{j+m}}}+\sum_{k=0}^{m-1}\left(\xi_{i-k}r_{\bm{ds^{j+k}}}-\xi_{j+k+1}r_{\bm{ds^{i-k-1}}}\right).
\end{equation}
We then have two cases depending on whether $i\le n-j-1$ (the case $i+j<n$) or otherwise.  If $i\le n-j-1$ then $i=\min(i,n-j-1)$ so setting $m=i$ in \eqref{eq:idj.induction} gives the first part of \eqref{coeffrels}.  If on the other hand $n-j-1<i$ ($i+j\ge n$) then setting $m=n-j-1$ in \eqref{eq:idj.induction} gives:
\begin{equation}
  r_{\bm{s^ids^j}}=r_{\bm{s^{i+j+1-n}ds^{n-1}}}+\sum_{k=0}^{n-j-2}\left(\xi_{i-k}r_{\bm{ds^{j+k}}}-\xi_{j+k+1}r_{\bm{ds^{i-k-1}}}\right).
\end{equation}
Combining this with the fact that
\begin{equation}
  r_{\bm{s^{i+j+1-n}ds^{n-1}}}=r_{\bm{s^{n-1}ds^{i+j+1-n}}}=r_{\bm{ds^{i+j-n}}}+\xi_{i+j+1-n}r_{\bm{ds^{n-1}}},
\end{equation}
by \eqref{n-1dj}, gives the second case of \eqref{coeffrels} and the forward direction is complete.

It remains to show the reverse implication: that an element whose coefficients satisfy \eqref{symmetry2} and \eqref{coeffrels} must centralize $\T_s$.  To show an element commutes with $\T_s$, we may restrict our attention to those elements from a given $\l s\r$-$\l s\r$ double coset.

We proceed in a very similar vein to the forward direction.  Suppose $h\in\H$ has expression in terms of the basis $B$ with coefficients satisfying the relations given in the Theorem statement.  We will show that the coefficient of any given $\T_{\bm{w}}$ in $h\T_s$ is equal to the coefficient of $\T_{\bm{w}}$ in $\T_sh$.  As before, we will need to look at different cases of $w$, and will need only look at a particular $\l s\r$-$\l s\r$ double coset.  That is, we will take $h$ to be a linear combination of elements of a double coset:
\[h=\sum_{0\le i,j\le n-1}r_{\bm{s^ids^j}}\T_{\bm{s^i\bm{d}s^j}}\]
where $d$ is a minimal length $\l s\r$-$\l s\r$ double coset representative.  Assuming coefficients satisfy \eqref{symmetry2} and \eqref{coeffrels}, and using the symmetry relation \eqref{symmetry2} to collect terms in the first summation below, we then have for $1\le i,j\le n-1$:
\begin{align}
h=&r_{\bm d}\T_{\bm d}+\sum_{i=1}^{n-1}r_{\bm{ds^i}}(\T_{\bm{ds^i}}+\T_{\bm{s^i\bm{d}}})+\nonumber\\
  &\sum_{i+j<n}\Bigl(r_{\bm{ds^{i+j}}}+\sum_{k=0}^{i-1}\left(\xi_{i-k}r_{\bm{ds^{j+k}}}-\xi_{j+k+1}r_{\bm{ds^{i-k-1}}}\right)\Bigr)\T_{\bm{s^i\bm{d}s^j}}+\nonumber\\
  &\sum_{i+j\ge n}\left(r_{\bm{ds^{i+j-n}}}+\xi_{i+j-n+1}r_{\bm{ds^{n-1}}}+ \sum_{k=0}^{n-j-2}\left(\xi_{i-k}r_{\bm{ds^{j+k}}}-\xi_{j+k+1}r_{\bm{ds^{i-k-1}}}\right)\right) \T_{\bm{s^i\bm{d}s^j}}.\label{genericdcosetsum}
\end{align}

We start as before by looking at the coefficient of $\T_{\bm{ds^j}}$ ($j>0$) in both $h\T_s$ and $\T_sh$.

In $\T_sh$ we will have such terms arising in products $\T_s\T_{\bm{s^{n-1}\bm{d}s^j}}$ with coefficient $\xi_0=1$, so its coefficient will be $r_{\bm{s^{n-1}ds^{j}}}=r_{\bm{ds^{j-1}}}+\xi_jr_{\bm{ds^{n-1}}}$ (since the coefficients in $h$ satisfy equations~\eqref{symmetry2} and~\eqref{coeffrels}).  In $h\T_s$ such terms will arise in the products $\T_{\bm{ds^{j-1}}}\T_s$ with coefficient 1 and in $\T_{\bm{ds^{n-1}}}\T_s$ with coefficient $\xi_j$.  Thus its coefficient in $h\T_s$ will be $1(r_{\bm{ds^{j-1}}})+\xi_j(r_{\bm{ds^{n-1}}})$ as it is in $\T_sh$.

Now we must show that the coefficients of terms of form $\T_{\bm{s^i\bm{d}s^j}}$ for $i,j\ge 1$ are the same in both $h\T_s$ and $\T_sh$.

Begin by supposing $i+j\le n$, and consider the coefficient of the term $\T_{\bm{s^i\bm{d}s^j}}$ in $h\T_s$.  This term will arise through the products $\T_{\bm{s^i\bm{d}s^{j-1}}}\T_s$ with coefficient 1, and $\T_{\bm{s^i\bm{d}s^{n-1}}}\T_s$ with coefficient $\xi_j$.  We have $i+(j-1)< n$ and $i+(n-1)\ge n$ and so our coefficient of $\T_{\bm{s^i\bm{d}s^j}}$ in $h\T_s$ is:
\begin{align*}
1&\left(r_{\bm{ds^{i+j-1}}}+\sum_{k=0}^{i-1}\left(\xi_{i-k}r_{\bm{ds^{j-1+k}}}-\xi_{j+k}r_{\bm{ds^{i-k-1}}}\right)\right)+ \xi_j(r_{\bm{ds^{i-1}}}+\xi_{i}r_{\bm{ds^{n-1}}})\\
 &=r_{\bm{ds^{i+j-1}}}+ \xi_i\left(r_{\bm{ds^{j-1}}}+\xi_jr_{\bm{ds^{n-1}}}\right)+
 \sum_{k=1}^{i-1}\left(\xi_{i-k}r_{\bm{ds^{j-1+k}}}-\xi_{j+k}r_{\bm{ds^{i-k-1}}}\right).
\end{align*}

Correspondingly, the coefficient of $\T_{\bm{s^i\bm{d}s^j}}$ in $\T_sh$ will arise in the products $\T_s\T_{\bm{s^{i-1}\bm{d}s^j}}$ with coefficient 1, and $\T_s\T_{\bm{s^{n-1}\bm{d}s^j}}$ with coefficient $\xi_i$.  This gives a coefficient of
\[r_{\bm{ds^{i-1+j}}}+\sum_{k=0}^{i-2}\left(\xi_{i-k-1}r_{\bm{ds^{j+k}}}-\xi_{j+k+1}r_{\bm{ds^{i-k-2}}}\right)+\xi_i\left(r_{\bm{ds^{j-1}}}+\xi_jr_{\bm{ds^{n-1}}}\right)
\]
(noting that when $i=n-1$, $\sum_{k=0}^{n-j-2}\left(\xi_{i-k}r_{\bm{ds^{j+k}}}-\xi_{j+k+1}r_{\bm{ds^{i-k-1}}}\right)=0$). This simplifies to the above coefficient in $h\T_s$.

It remains to show that the coefficients of $\T_{\bm{s^i\bm{d}s^j}}$ in $\T_sh$ and $h\T_s$ are equal when $i+j>n$.
Again this term arises in the products $\T_{\bm{s^i\bm{d}s^{j-1}}}\T_s$ with coefficient 1, and $\T_{\bm{s^i\bm{d}s^{n-1}}}\T_s$ with coefficient $\xi_j$.  This time, $i+(j-1)$ and $i+(n-1)$ are both $\ge n$, and so our coefficient of $\T_{\bm{s^i\bm{d}s^j}}$ in $h\T_s$ is:
\begin{multline}\label{eq:hTs.coeff}
r_{\bm{ds^{i+j-1-n}}}+\xi_{i+j-n}r_{\bm{ds^{n-1}}}+ \sum_{k=0}^{n-j-1}\left(\xi_{i-k}r_{\bm{ds^{j+k-1}}}-\xi_{j+k}r_{\bm{ds^{i-k-1}}}\right)+ \\ \xi_j\left(r_{\bm{ds^{i-1}}}+\xi_{i}r_{\bm{ds^{n-1}}}\right).
\end{multline}
Similarly, the coefficient in $\T_sh$ is from the term's appearance in the product $\T_s\T_{\bm{s^{i-1}\bm{d}s^j}}$ with coefficient 1 and $\T_s\T_{\bm{s^{n-1}\bm{d}s^j}}$ with coefficient $\xi_i$.  Since the coefficient in $h$ satisfy the symmetry relation~\eqref{symmetry2}, $r_{\bm{s^{i-1}{d}s^j}}=r_{\bm{s^{j}{d}s^{i-1}}}$ and $r_{\bm{s^{n-1}{d}s^j}}=r_{\bm{s^j{d}s^{n-1}}}$.  Thus the coefficient of $\T_{\bm{s^i\bm{d}s^j}}$ in $\T_sh$ is
\begin{multline}\label{eq:Tsh.coeff}
r_{\bm{ds^{i+j-1-n}}}+\xi_{i+j-n}r_{\bm{ds^{n-1}}}+ \sum_{k=0}^{n-i-1}\left(\xi_{j-k}r_{\bm{ds^{i+k-1}}}-\xi_{i+k}r_{\bm{ds^{j-k-1}}}\right)+ \\ \xi_i\left(r_{\bm{ds^{j-1}}}+\xi_{j}r_{\bm{ds^{n-1}}}\right).
\end{multline}
A careful check of these summations shows that the expressions~\eqref{eq:hTs.coeff} and~\eqref{eq:Tsh.coeff} are equal.
It follows that $h\T_s=\T_sh$ and the proof is complete.
\end{proof}

\begin{rem}
    Theorem \ref{characterized} is a generalization of \cite[Lemma (2.4)]{DJ87} and \cite[Lemma (3.1)]{Fmb} to generators of order greater than or equal to $2$.
\end{rem}
    Theorem \ref{characterized} has as a consequence an $s$-conjugacy class analogue of results (due to Ram \cite{Ram91} in type $A$ and Geck and Pfeiffer \cite{GP93} for general finite Coxeter groups) which state that coefficients of terms in central elements of a Hecke algebra can be written as linear combinations of coefficients of shortest elements of conjugacy classes.  Note that all the coefficients on the right hand side of equations \eqref{coeffrels} are shortest in their $s$-conjugacy classes.

    However there is an important distinction between generators of order two and higher here. In a double coset of a generator of order two the shortest elements all either appear on the right hand side of the equations in the Theorem statement, or they are accounted for by Lemma \ref{symmetry}.  This forces all coefficients of minimal length elements to be equal.  When the generator has higher order, not all minimal elements are accounted for in this way, and so not all corresponding coefficients are equal.  For instance, if $s^3=1$ and $\l s\r d\l s\r$ is additive, then $\{ds^2, s^2d, sds\}$ are all shortest elements of the same $s$-conjugacy class but $r_{sds}$ is dependent on other coefficients: $r_{sds}=r_{ds^2}+\xi_1r_{ds}-\xi_2r_d$ (see equation~\eqref{coeffrels} in Theorem~\ref{characterized}).

Note that the coefficients of the $r_w$ in the right hand sides of the expressions for $r_{s^ids^j}$ in Theorem \ref{characterized} are $1$ if and only if $w$ is $s$-conjugate to $s^ids^j$.

\begin{cor}
Let $\H$ be the Hecke algebra of a finite complex reflection group $W$ with reduced basis $B$, and $s\in S$.

If $\l s\r d\l s\r$ is additive and $B$ satisfies $\T_{s^i}\T_{\bm d,B}\T_{s^j}=\T_{\bm{s^ids^j},B}$ for any $1\le i,j\le m_s-1$
(or centralizing with $\T_{s^i}\T_{\bm d,B}=\T_{\bm{s^id},B}$ for any $1\le i\le m_s-1$), then the coefficient $r_{\bm w,B}$ for $w\in\l s\r d\l s\r$ of any $\T_{\bm w,B}$ appearing in an element $h$ of $Z_\H(\T_s)$ may be written as an $ R$-linear combination of the coefficients of shortest elements of $s$-conjugacy classes in $h$.
\end{cor}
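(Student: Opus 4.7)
The plan is to derive the Corollary directly from Theorem \ref{characterized} together with the remark stated immediately before it, handling the additive and centralizing cases separately. Throughout I will write $n=m_s$ as in the Theorem.

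For the additive case, I would start with an element $h\in Z_\H(\T_s)$ and restrict attention to its portion supported on $\l s\r d\l s\r$; this is legitimate because additivity together with the hypothesis on $B$ implies stability of the double coset by Lemma \ref{lem:additive_implies_stable}. Theorem \ref{characterized}, through equations \eqref{symmetry2} and \eqref{coeffrels}, then expresses each coefficient $r_{\bm{s^ids^j},B}$ with $1\le i,j\le n-1$ as an explicit $R$-linear combination of the coefficients $r_{\bm{ds^k},B}$ for $0\le k\le n-1$. The boundary cases where $i=0$ or $j=0$ either already take this form or reduce to it via the symmetry relation \eqref{symmetry2}, which yields $r_{\bm{s^id},B}=r_{\bm{ds^i},B}$.

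It then remains to verify that each $ds^k$ for $0\le k\le n-1$ is a shortest element of its $s$-conjugacy class within the double coset. I would use additivity to observe that the $s$-conjugates of $ds^k$ inside $\l s\r d\l s\r$ are the elements $s^ids^{k-i\,\mathrm{mod}\,n}$, and a direct length calculation gives $l(s^ids^{k-i\,\mathrm{mod}\,n})=l(d)+k$ when $0\le i\le k$ and $l(d)+k+n$ when $k<i\le n-1$. This identifies $ds^k$ as a shortest representative, confirming the observation made immediately before the Corollary that every coefficient appearing on the right hand side of \eqref{coeffrels} is a coefficient of a shortest element of an $s$-conjugacy class.

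In the centralizing case the double coset is $\{ds^i\mid 0\le i<n\}$; since $d\in Z_W(s)$, each $ds^i$ is fixed by conjugation by every power of $s$, so each element forms a singleton $s$-conjugacy class and is trivially its own shortest representative, and the conclusion is immediate. The main obstacle here is essentially cosmetic: it lies in the length bookkeeping needed to identify the $ds^k$ as shortest representatives in the additive case, and in matching the boundary coefficients to the framework of Theorem \ref{characterized} via the symmetry lemma; the substantive work has already been carried out in that theorem.
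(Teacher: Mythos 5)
Your proposal is correct and follows essentially the same route as the paper: the additive case is read off from the right hand sides of equations \eqref{coeffrels} (together with the symmetry relation \eqref{symmetry2}) in Theorem \ref{characterized}, and the centralizing case is trivial because each $ds^k$ is a singleton $s$-conjugacy class. Your explicit length computation showing $l(s^ids^{k-i\bmod n})=l(d)+k$ for $0\le i\le k$ and $l(d)+k+n$ otherwise merely verifies the observation the paper records in the remark preceding the Corollary, so it is a welcome detail rather than a different argument.
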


\begin{proof}
The centralizing case is trivial as in this case each $w\in\l s\r d\l s\r$ is itself shortest in its (singleton) conjugacy class.  The additive case is immediate from the Theorem, by inspection of the right hand side of equations \eqref{coeffrels}.
\end{proof}

\subsection{Basis elements}\label{sec:basis.thm.for.centralizers}

We collect here sufficient conditions for a set to be an integral basis of a centralizer of a parabolic subalgebra.
Suppose $\H$ is the Hecke algebra of a complex reflection group $W$, with standard generating set $S$.

\begin{prop}\label{basis}
Let $W$ be a finite complex reflection group with $r$ $J$-conjugacy classes (for $J\subseteq S$), whose Hecke algebra $\H$ is a free $R$-module with reduced basis $B=\{\T_{\bm w,B}\mid w\in W\}$.

We assume the existence of a set of $r$ ``distinguished'' reduced expressions
\[\mathcal M:=\{\bm w_i\in\eta_B(w_i)\mid w_i\in C_i\in ccl_J(W)\}\subset Red(S)\]
indexed by the $J$-conjugacy classes $ccl_J(W)$ of $W$, together with relations
\begin{equation}\mathcal R:=\{r_{\bm g,B}=\sum_{w\in\mathcal M}a_{\bm{g},\bm w}r_{\bm w,B}\mid a_{\bm g,\bm w}\in R\}\label{eq:basis:rels}\end{equation}
with the property that $h=\sum_{g\in W}r_{\bm g,B}\T_{\bm g,B}\in Z_\H(\H_J)$ if and only if the relations $\mathcal R$ hold.

Then there exists a set $\B=\{b_1,\dots,b_r\}\subseteq Z_\H(\H_J)$ satisfying
    \begin{enumerate}
    \item\label{Bcriterion2} Each $b_i$ contains a term $\T_{\bm w_i,B}$ for $\bm{w_i}\in\mathcal M$ with coefficient $1$ which does not appear with non-zero coefficient in any other $b_j$ ($j\neq i$),
    \item $\B$ is an $ R$-basis for $Z_\H(\H_J)$.
    \end{enumerate}
\end{prop}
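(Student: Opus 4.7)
The plan is to construct the $b_i$ directly from the given relations $\mathcal R$ and then verify each claim separately. For each distinguished element $\bm w_i \in \mathcal M$, I would set
\[ b_i := \sum_{g \in W} a_{\bm g, \bm w_i}\, \T_{\bm g, B}, \]
where the $a_{\bm g,\bm w}$ are the coefficients appearing in $\mathcal R$. A preliminary observation is needed: among the relations in $\mathcal R$ are those with $\bm g = \bm w_j$ for $w_j \in \mathcal M$, and since the $r_{\bm w_i, B}$ are meant to be free parameters in the parametrisation of the centraliser, these relations must reduce to the trivial identities $r_{\bm w_j, B} = r_{\bm w_j, B}$. This forces the Kronecker delta normalisation $a_{\bm w_j, \bm w_i} = \delta_{ij}$. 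With this in place the coefficient of $\T_{\bm w_j, B}$ in $b_i$ equals $\delta_{ij}$, which is precisely property~(1).

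Next I would check $b_i \in Z_\H(\H_J)$ by appealing to the ``if'' direction of the hypothesis on $\mathcal R$. By construction the coefficients of $b_i$ are exactly the values produced by the relations $\mathcal R$ under the specialisation $r_{\bm w_k, B} = \delta_{ik}$, so they satisfy $\mathcal R$, and hence $b_i$ centralises $\H_J$. For the basis claim, linear independence is immediate from the delta property: in any relation $\sum_i c_i b_i = 0$ the coefficient of $\T_{\bm w_j, B}$ is exactly $c_j$, so each $c_j$ vanishes. For spanning, given any $h = \sum_g r_{\bm g, B}\, \T_{\bm g, B} \in Z_\H(\H_J)$, the ``only if'' direction gives that the coefficients satisfy $\mathcal R$, and rearranging yields
\[ h = \sum_{g \in W}\Bigl( \sum_{i=1}^r a_{\bm g, \bm w_i}\, r_{\bm w_i, B} \Bigr) \T_{\bm g, B} = \sum_{i=1}^r r_{\bm w_i, B}\, b_i, \]
exhibiting $h$ as an $R$-linear combination of $\B$.

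The main non-mechanical point is really the implicit normalisation step: one has to recognise that the way $\mathcal R$ is formulated (as identities valid for every $h \in Z_\H(\H_J)$) forces $a_{\bm w_j, \bm w_i} = \delta_{ij}$, and hence that each $b_i$ is ``picked out'' by its distinguished term. Once that is pinned down, everything else — membership in $Z_\H(\H_J)$, linear independence, spanning — is bookkeeping driven by the two directions of the biconditional in the hypothesis. An appealing feature of this argument is that it is entirely effective: from the explicit relations $\mathcal R$ one literally reads off the basis $\B$.
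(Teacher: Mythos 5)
Your proposal is correct and takes essentially the same approach as the paper: each $b_i$ is the centralizer element obtained by the delta specialisation $r_{\bm w_k,B}=\delta_{ik}$ of the distinguished coefficients (your explicit formula $b_i=\sum_{g}a_{\bm g,\bm w_i}\T_{\bm g,B}$ is exactly that element), membership follows from the ``if'' direction, independence from the delta property, and spanning from the ``only if'' direction, where your direct rearrangement of the double sum is trivially equivalent to the paper's subtract-and-show-zero step. Your explicit normalisation $a_{\bm w_j,\bm w_i}=\delta_{ij}$ is a point the paper leaves implicit when it asserts the delta specialisation defines a centralizer element, so making it visible is a small improvement rather than a deviation.
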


\begin{defn}
Elements $b_i$ of $\B$ satisfying the properties of the theorem will be called \emph{$J$-class elements}.
\end{defn}

\begin{proof}
The assumption of the existence of the set $\mathcal M$ with the properties as declared means that an element of the centralizer is uniquely determined by the values of its coefficients of elements $\T_{\bm w,B}$ corresponding to the elements $\bm w$ of $\mathcal M$.
It also follows immediately from the assumptions that any element of $\H$ whose coefficients satisfy the relations $r_{\bm g,B}=\sum_{w\in\mathcal M}a_{\bm g,\bm w}r_{\bm w,B}$ is in the centralizer.  Thus setting $r_{\bm{w'},B}=\delta_{\bm w,\bm w'}$ where $\bm w'$ runs over the elements of $\mathcal M$, uniquely defines an element of the centralizer corresponding to the element $\bm w\in\mathcal M$.   We will denote this element $b_i$, corresponding to the element $\bm{w_i}\in\mathcal M$ (which corresponds to the $J$-conjugacy class $C_i$).

We claim that the set $\B=\{b_i\mid C_i\in ccl_J(W)\}$ is the set whose existence is asserted in the theorem.  Claim \eqref{Bcriterion2} follows trivially from the definition of $b_i$.  Note also that the linear independence of the set $\B$ follows immediately from part \eqref{Bcriterion2} of the Claim.  Thus it remains to show that $\B$ spans $Z_\H(\H_J)$ over $ R$.

Suppose $h\in Z_\H(\H_J)$, and write $h$ with respect to $B$, $h=\sum_{g\in W}r_{\bm g,B}\T_{\bm g,B}$ with $r_{\bm g,B}\in R$.  Then we may write $h=\sum_{w_i\in\mathcal M}r_{\bm w_i}\T_{\bm w_i}+\sum_{\T_{\bm g}\in B,g\not\in\mathcal M}r_{\bm g,B}\T_{\bm g,B}$.  We then have that
\[h':=h-\sum_{1\le i\le r}r_{\bm{w_i}}b_i\in Z_\H(\H_J),\]
however this new element $h'$ of the centralizer contains no terms from $\mathcal M$ with non-zero coefficient.  That is, for $\bm{g_i}\in\mathcal M$, the coefficient of $\T_{\bm g_i}$ in $h'$ is zero.  It follows that $h'=0$ as all coefficients of terms $\T_{\bm g,B}$ in $h'$ are linear combinations of the coefficients of the terms corresponding to elements of $\mathcal M$ (which are all zero).  Thus $h=\sum_{1\le i\le r}r_{\bm{g_i}}b_i$, and so is in $span_{ R}(B)$.  Thus $\B$ is an $ R$-basis for $Z_\H(\H_J)$.
\end{proof}

\begin{cor}\label{cor:basis+classes}
If in addition we have the property that in the relations \eqref{eq:basis:rels} we have $a_{\bm g,\bm w}=1$ if and only if $w$ and $g$ are conjugate, and all other $a_{\bm g,\bm{w'}}$ specialize to zero, then the basis elements $\B$ specialize to conjugacy class sums in the group algebra.
\end{cor}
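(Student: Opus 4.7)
The plan is to trace the action of the specialization $\rho$ (sending $\xi_{s,i}\mapsto 0$ for $0<i<m_s$) on each basis element $b_i$, and to verify that its image in the group algebra is the $J$-conjugacy class sum of $C_i$. Under $\rho$, the Hecke algebra $\H$ collapses to the group algebra of $W$; in particular every reduced expression for a given group element represents the same element, so $\rho(\T_{\bm g, B}) = g$ for each $g\in W$.

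First I would unpack the construction of $b_i$ from the proof of Proposition~\ref{basis}: it is the unique element of $Z_\H(\H_J)$ whose coefficients on the distinguished reduced expressions satisfy $r_{\bm w_j, B}(b_i)=\delta_{ij}$ for all $w_j\in\mathcal M$. Feeding this into the defining relations \eqref{eq:basis:rels} shows that the coefficient of $\T_{\bm g, B}$ in $b_i$ is precisely $a_{\bm g, \bm w_i}$, with $a_{\bm w_j, \bm w_i}=\delta_{ij}$ (this is consistent with the additional hypothesis, since $w_j$ is $J$-conjugate to $w_i$ exactly when $i=j$).

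Next, applying $\rho$ and invoking the additional hypothesis, $\rho(a_{\bm g, \bm w_i}) = 1$ whenever $g$ is $J$-conjugate to $w_i$ and $\rho(a_{\bm g, \bm w_i}) = 0$ otherwise. Therefore
\[\rho(b_i) = \sum_{g\in W} \rho(a_{\bm g, \bm w_i})\, g \;=\; \sum_{g\in C_i} g,\]
which is the $J$-conjugacy class sum of $C_i$ in the group algebra of $W$.

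Because the argument is essentially an unpacking of the construction of $b_i$ together with the hypothesis on the $a_{\bm g,\bm w}$, I do not anticipate a substantial obstacle. The only place to proceed carefully is the coefficient of a distinguished basis element $\T_{\bm w_j, B}$ itself within $b_i$: by construction this coefficient is $\delta_{ij}$, and the ``if and only if'' in the hypothesis is exactly what guarantees this agrees with the specialization pattern applied to a generic $g$, so no separate case analysis is needed.
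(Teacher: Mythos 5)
Your proof is correct and follows exactly the intended route: the paper states this corollary without a separate proof, treating it as immediate from the construction of the $b_i$ in Proposition~\ref{basis}, and your argument is precisely that unpacking (coefficient of $\T_{\bm g,B}$ in $b_i$ equals $a_{\bm g,\bm w_i}$ by the relations \eqref{eq:basis:rels} with $r_{\bm w_j,B}=\delta_{ij}$, then apply $\rho$). Your care about the distinguished coefficients $a_{\bm w_j,\bm w_i}=\delta_{ij}$ and the fact that $\rho$ identifies all reduced expressions for a group element is exactly the right level of detail.
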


The scenario of Corollary \ref{cor:basis+classes} is, of course, the familiar situation in which we find ourselves for the finite Coxeter groups, but also applies for $G_4$ (see Corollary \ref{cor:basis+classes+g4} below).

The Proposition and its Corollary are an attempt to formalize the tools used to find such integral bases in real reflection groups in a way which allows the generalization to complex reflection groups.  In the real case, the set $\mathcal M$ can be any selection of representatives of minimal elements of the $J$-conjugacy classes. In the complex case it is necessary to select more carefully, and it is not clear firstly whether such a set exists in general and secondly if it does then whether there is a general characterization of such distinguished terms.

\section{Type $G_4$}\label{sec:g4}

In this section we focus our attention on the Hecke algebra of the complex reflection group $G_4$.  In Section \ref{sec:g4.centralizers}, after introducing some properties of the group and its associated Hecke algebra (Sections \ref{sec:g4.hecke.algebra} and \ref{sec:g4.hecke.bases}) we use the results of Section \ref{sec:additive.d.cosets.coeff.rels} to describe the centralizers of the generators.  In Section \ref{sec:g4.centre} we use these centralizer bases to construct a basis for the centre.  The principal difficulty in combining the centralizer bases is that they are each with respect to different bases for the Hecke algebra.  In order to combine them into a basis for the centre they need re-writing with respect to a common basis for $\H$, which we do in Section \ref{sec:g4.hecke.bases.rewriting}.

\subsection{The Hecke algebra of type $G_4$}\label{sec:g4.hecke.algebra}

The complex reflection group $G_4$ is generated by two pseudo-reflections $S=\{s,t\}$, each of order $3$, and who together satisfy the standard braid relation.  That is,
\[G_4=\l s,t\mid s^3=t^3=1,\ sts=tst\r.\]
The group can be represented by a Coxeter-type diagram as in \cite{BMR98} (see Figure \ref{G4diagram}).

\begin{figure}[h]
   \begin{minipage}[t]{\linewidth}
        \centering
        \includegraphics{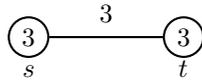}
    \caption{Coxeter-type diagram of the group $G_4$}\label{G4diagram}
   \end{minipage}%
\end{figure}

The group has $24$ elements and seven conjugacy classes:
\[\{1\}, \{s,t,s^2ts,sts^2\}, \{s^2,t^2,s^2t^2s,st^2s^2\}, \{st,ts,t^2st^2,s^2ts^2\},\]
\[\{s^2t^2,t^2s^2,st^2s,ts^2t\}, \{ts^2,s^2t,st^2,t^2s,sts,s^2t^2s^2\}, \{ststst\}.\]

There are several useful relations among words in $s$ and $t$:
\[s^it^js^k=t^ks^jt^i\quad\text{for }i,j,k\in\{1,2\},\ j=i\text{ and/or }k.\]

The centralisers of the generators are $Z_{G_4}(s)=\{1,s,s^2,ts^2t,t^2st^2,tststs\}$ and $Z_{G_4}(t)=\{1,t,t^2,st^2s,s^2ts^2,tststs\}$, and the centre is $Z(G_4)=\{1,ststst\}$.  The $\l s\r$-$\l s\r$ double coset representatives are $\{1,ts^2t,t,t^2\}$ and the $\l t\r$-$\l t\r$ double coset representatives are $\{1,st^2s,s,s^2\}$.

Note that all $\l s\r$-$\l s\r$ and $\l t\r$-$\l t\r$ double cosets in $G_4$ either centralize $s$ (resp. $t$) or are additive.

The Hecke algebra of $G_4$ is the algebra generated over $R=\Z[\xi_1,\xi_2]$ by $\{\T_s,\T_t\}$ subject to the analogue of the braid relation and the order relation
\begin{equation}\label{xiHpower}
\T_x^3=1+\xi_{1}\T_x+\xi_{2}\T_{x^2}
\end{equation}
for $x\in S$.  Note that since $s$ and $t$ are conjugate, $\xi_{s,i}=\xi_{t,i}=:\xi_i$.

\subsection{Bases for the Hecke algebra}\label{sec:g4.hecke.bases}
As noted in Section \ref{sec:heckealg}, different reduced expressions for the same group element can yield different elements of the algebra $\H$.  In what follows we will use two bases for $\H$, one appropriate to each of the centralizers $Z_\H(\T_s)$ and $Z_\H(\T_t)$. When we proceed to find an integral basis for the centre of $\H$ we will have to choose one of these.  Thus, in order to study the centre we will need to be able to transform between the bases.

Because for much of this section we will be representing basis elements by explicit reduced expressions, we will drop the reference to the basis $B$ from the notation where appropriate, writing for instance $\T_{s^2ts}$ to represent $\T_{s^2}\T_t\T_s$ instead of the more general $\T_{\bm{s^2ts},B}$, which refers to the reduced expression in the basis $B$ that corresponds to the group element $s^2ts$.

The bases we will use are as follows.

\noindent
The $sts$ basis:
\begin{multline*}
B_{sts}=\{\T_1, \T_s, \T_t, \T_{s^2}, \T_{t^2}, \T_{st}, \T_{ts}, \T_{st^2}, \T_{t^2s}, \T_{s^2t}, \T_{ts^2}, \T_{s^2t^2}, \T_{t^2s^2}, \T_{st^2s}, \\
\T_{ts^2t}, \T_{s^2ts^2}, \T_{t^2st^2}, \T_{ststst}, \T_{sts}, \T_{sts^2}, \T_{s^2ts}, \T_{s^2t^2s}, \T_{st^2s^2}, \T_{s^2t^2s^2}\},
\end{multline*}
and the $tst$ basis:
\begin{multline*}
B_{tst}=\{\T_1, \T_s, \T_t, \T_{s^2}, \T_{t^2}, \T_{st}, \T_{ts}, \T_{st^2}, \T_{t^2s}, \T_{s^2t}, \T_{ts^2}, \T_{s^2t^2}, \T_{t^2s^2}, \T_{st^2s}, \\
\T_{ts^2t}, \T_{s^2ts^2}, \T_{t^2st^2}, \T_{ststst}, \T_{tst}, \T_{t^2st}, \T_{tst^2}, \T_{ts^2t^2}, \T_{t^2s^2t}, \T_{t^2s^2t^2}\}.
\end{multline*}

Note that all but the last six elements of these bases are identical.  The last six elements of each basis are listed so that equal elements in $G_4$ correspond, but with different reduced expressions. For these we will describe the transformations between bases.

Firstly, if $\bm w$ and $\bm{w'}$ are two reduced expressions for the same group element and $\bm{w}$ can be reached from $\bm{w'}$ by repeated application of the braid relation alone, then as mentioned above $\T_{\bm w}=\T_{\bm{w'}}$ (Subsection~\ref{subsubsec:R-mod}).  This is the case for the first three of the remaining terms ($T_{sts}=T_{tst}$, $T_{sts^2}=T_{t^2st}$, $T_{s^2ts}=T_{tst^2}$).  For example, \[\T_{s^2ts}=\T_s\T_s\T_t\T_s=\T_s\T_t\T_s\T_t=\T_t\T_s\T_t\T_t=\T_{tst^2}.\]
The remaining relations, $s^2t^2s=ts^2t^2$, $st^2s^2=t^2s^2t$, and $s^2t^2s^2=t^2s^2t^2$ are not so neatly transcribed to the Hecke algebra, as the transformation needs the substitution of $1=\T_s^3-\xi_2\T_{s^2}-\xi_1\T_s=\T_t^3-\xi_2\T_{t^2}-\xi_1\T_t$.
For brevity, we omit the details, but the following relations can easily be shown to hold:
\begin{align}\label{t2s2t}
\T_{s^2t^2s}-\T_{ts^2t^2}&=\xi_2(\T_{st^2s}-\T_{ts^2t})+\xi_1(\T_{t^2s}-\T_{ts^2}),\\
\T_{st^2s^2}-\T_{t^2s^2t}&=\xi_2(\T_{st^2s}-\T_{ts^2t})+\xi_1(\T_{st^2}-\T_{s^2t}),\text{ and}\notag\\
\T_{s^2t^2s^2}-\T_{t^2s^2t^2}&=\xi_2(\T_{s^2ts^2}-\T_{t^2st^2})+\xi_2^2(\T_{st^2s}-\T_{ts^2t})+\notag\\
 &\hspace{2cm}\xi_1\xi_2(\T_{t^2s}-\T_{s^2t}+\T_{st^2}-\T_{ts^2})+\xi_1(\T_s-\T_t).\notag
\end{align}

\begin{rem}
Note that these relations all agree with the ordering in Proposition~\ref{prop:bremal97}, which relates specifically to the Hecke algebras of $G(r,1,n)$.  It would be interesting to know whether this were true for Hecke algebras of complex reflection groups wherever a reduced basis exists.
\end{rem}

Recall the definition of an $x$-stable basis, for $x\in S$ (Definition \ref{def:stablebasis}).  For the Hecke algebra of $G_4$, there is an $x$-stable basis for each $x\in S$ as follows (the proof involves elementary checking and is omitted).

\begin{lem}\label{lem:sgoodbasis:G4}
The Hecke algebra $\H$ of $G_4$ has $x$-stable bases for each $x\in S$.  $B_{sts}$ is an $s$-stable basis for $\H$ and $B_{tst}$ is a $t$-stable basis for $\H$.
\end{lem}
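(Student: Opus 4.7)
The plan is to verify stability directly, case by case, for each of the four $\langle s \rangle$-$\langle s \rangle$ double cosets with respect to $B_{sts}$, and then to obtain the analogous $t$-stability of $B_{tst}$ by the obvious $s \leftrightarrow t$ symmetry of the setup. From Section~\ref{sec:g4.hecke.algebra}, the minimal-length representatives are $1$, $t$, $t^2$, and $ts^2 t \in Z_{G_4}(s)$, giving respectively a trivial, two additive, and one centralizing double coset (the paper already notes that these are the only possibilities for $G_4$).

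The trivial coset is immediate since $\H_{\langle s \rangle} \T_1 \H_{\langle s \rangle} = \H_{\langle s \rangle}$. I would dispose of the two additive cosets very cleanly using Lemma~\ref{lem:additive_implies_stable}: inspection of the listing of $B_{sts}$ shows that each of the nine group elements $s^i t s^j$ and each of the nine $s^i t^2 s^j$ (with $0 \le i, j \le 2$) has its chosen reduced expression equal to the literal word in the subscript. By the convention for $\T_{\bm w, B}$ recorded in Section~\ref{subsubsec:R-mod}, this means $\T_{s^i} \T_t \T_{s^j} = \T_{\bm{s^i t s^j}, B_{sts}}$ for all admissible $i,j$ (and likewise for $t^2$), which is precisely the hypothesis required to invoke Lemma~\ref{lem:additive_implies_stable} and conclude stability of these two cosets.

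The real work lies in the centralizing coset $\langle s \rangle ts^2 t \langle s \rangle$. Using that $s$ commutes with $ts^2 t$ at the group level, together with the explicit enumeration of $Z_{G_4}(s) = \{1, s, s^2, ts^2 t, t^2 s t^2, tststs\}$, the coset consists of the three elements $ts^2 t$, $t^2 s t^2$, and $ststst$. The plan here is to verify by repeated use of the braid relation $\T_s \T_t \T_s = \T_t \T_s \T_t$, together with its standard consequence $(\T_s \T_t)^3 = (\T_t \T_s)^3$, the identities
\begin{equation*}
\T_s \T_{ts^2 t} = \T_{ts^2 t} \T_s = \T_{t^2 s t^2}, \qquad \T_s \T_{t^2 s t^2} = \T_{t^2 s t^2} \T_s = \T_{ststst}.
\end{equation*}
From these four equalities every product $\T_{s^i} \T_{ts^2 t} \T_{s^j}$ with $0 \le i, j \le 2$ collapses to an $R$-multiple of one of the basis elements $\T_{ts^2 t}$, $\T_{t^2 s t^2}$, $\T_{ststst}$, all three of which lie in $B_{sts}$ and index elements of the coset; this is stability.

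The $t$-stability of $B_{tst}$ follows immediately by interchanging $s$ and $t$ throughout, as the presentation of $G_4$ and the symmetric construction of the two bases make the two arguments formally identical. The main obstacle is the centralizing case: the braid manipulations themselves are routine, but one has to check that they land exactly on the chosen basis representative $\T_{ststst}$ rather than on a genuinely distinct algebra element that happens to represent the same group element, a danger that is generic in the complex reflection group setting and which is only avoided here because the central element admits the two reduced expressions $ststst$ and $tststs$ that are braid-equivalent.
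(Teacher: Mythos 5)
Your proposal is correct and matches the paper's approach: the paper omits this proof as ``elementary checking,'' and your case-by-case verification is precisely that check --- the two additive cosets $\l s\r t\l s\r$ and $\l s\r t^2\l s\r$ are handled by Lemma~\ref{lem:additive_implies_stable} since $B_{sts}$ contains the literal words $s^its^j$ and $s^it^2s^j$, the centralizing coset $\l s\r ts^2t\l s\r=\{ts^2t,\ t^2st^2,\ ststst\}$ is handled by the braid identities you state (all four of which do hold by braid moves alone, e.g.\ $sts^2t\sim_{\rm br}tstst\sim_{\rm br}t^2st^2$ and $st^2st^2\sim_{\rm br}ststst\sim_{\rm br}tststs$), and the $s\leftrightarrow t$ symmetry legitimately transfers the result to $B_{tst}$ because the swap is an $R$-algebra automorphism (as $\xi_{s,i}=\xi_{t,i}$) carrying $B_{sts}$ onto $B_{tst}$ as sets, using $\T_{tststs}=\T_{ststst}$. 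One trivial slip: for $i+j\ge 3$ the product $\T_{s^i}\T_{ts^2t}\T_{s^j}=\T_{ts^2t}\T_s^{i+j}$ collapses via the order relation $\T_s^3=1+\xi_1\T_s+\xi_2\T_{s^2}$ to an $R$-\emph{linear combination} of $\T_{ts^2t}$, $\T_{t^2st^2}$, $\T_{ststst}$ rather than an $R$-multiple of a single one, which does not affect the stability conclusion.
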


This implies that all $\l s\r$-$\l s\r$ and $\l t\r$-$\l t\r$ double cosets in $G_4$ are stable with respect to $B_{sts}$ and $B_{tst}$ respectively, and hence that the corresponding double coset graphs are all trivial.

%%%%%%%%%%%%%%%%%

\subsection{Centralizers in the Hecke algebra of type $G_4$}\label{sec:g4.centralizers}

The explicit consequences of Corollary \ref{cor:basis+classes} for the group $G_4$ are the following, noting that the Hecke algebra $\H$ of $G_4$ has an $s$-stable basis (Lemma \ref{lem:sgoodbasis:G4}), and that every double coset $\l s\r d\l s\r$ in $G_4$ is either in the centralizer $Z_W(s)$ or is additive.

\begin{cor}\label{cor:coeffrelsg4} Let $\H$ be the Hecke algebra of $G_4$, with respect to the basis $B_{sts}$.  Then $h=\sum_{w\in G_4}r_{\bm w,B_{sts}}\T_{\bm w,B_{sts}}\in Z_\H(\T_s)$ if and only if the following relations hold:
  \[
  \begin{array}[t]{rr@{\,=\,}l}
  \textsc{\tiny [S1]}&r_{s^2ts}=r_{sts^2}     &r_t+\xi_1r_{ts^2}\cr
  \textsc{\tiny [S2]}&r_{s^2t^2s}=r_{st^2s^2} &r_{t^2}+\xi_1r_{t^2s^2}\cr
  \textsc{\tiny [S3]}&r_{sts}                   &r_{ts^2}+\xi_1r_{ts}-\xi_2r_{t}\cr
  \textsc{\tiny [S4]}&r_{st^2s}                 &r_{t^2s^2}+\xi_1r_{t^2s}-\xi_2r_{t^2}\cr
  \textsc{\tiny [S5]}&r_{s^2ts^2}             &r_{ts}+\xi_2r_{ts^2}\cr
  \textsc{\tiny [S6]}&r_{s^2t^2s^2}             &r_{t^2s}+\xi_2r_{t^2s^2}\cr
  \textsc{\tiny [S7]}&r_{st}                &r_{ts}\cr
  \textsc{\tiny [S8]}&r_{st^2}                &r_{t^2s}\cr
  \textsc{\tiny [S9]}&r_{s^2t}                &r_{ts^2}\cr
  \textsc{\tiny [S10]}&r_{s^2t^2}                &r_{t^2s^2}\cr
  \end{array}
  \]
\end{cor}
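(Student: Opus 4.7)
The plan is to apply Theorem \ref{characterized} separately to each $\l s\r$-$\l s\r$ double coset of $G_4$, using the reduced basis $B_{sts}$. From Section \ref{sec:g4.hecke.algebra}, the distinguished representatives are $\{1, ts^2t, t, t^2\}$: the first two give centralizing cosets and the last two give additive cosets. Since $B_{sts}$ is $s$-stable by Lemma \ref{lem:sgoodbasis:G4}, each double coset is stable, so $h = \sum_{w \in G_4} r_{\bm w, B_{sts}}\T_{\bm w, B_{sts}}$ lies in $Z_\H(\T_s)$ if and only if its restriction to each double coset does. This lets me handle the two types of coset independently.

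For the centralizing cosets I would show that each basis element in the coset commutes with $\T_s$, so that no relations on the $r_w$ arise. The case $w \in \{1, s, s^2\}$ is trivial. For the coset of $ts^2t$, a direct braid-relation calculation shows $\T_s\T_{ts^2t} = \T_{ts^2t}\T_s$: expanding $\T_{ts^2t} = \T_t\T_s\T_s\T_t$ and applying $\T_s\T_t\T_s = \T_t\T_s\T_t$ twice gives $\T_s\T_t\T_s\T_s\T_t = \T_t\T_s\T_t\T_s\T_t = \T_t\T_s\T_s\T_t\T_s$. It follows that the submodule $\H_{\l s\r ts^2t\l s\r}$ is spanned by $\T_{ts^2t}$, $\T_{ts^2t}\T_s$, $\T_{ts^2t}\T_{s^2}$, each of which commutes with $\T_s$; hence any $B_{sts}$-basis element lying in this submodule (being an $R$-linear combination of these) does too.

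For the two additive cosets, with $d \in \{t, t^2\}$ and $n = m_s = 3$, the hypothesis $\T_{s^i}\T_{\bm d, B_{sts}}\T_{s^j} = \T_{\bm{s^ids^j}, B_{sts}}$ for $1 \le i, j \le 2$ is immediate by inspection of $B_{sts}$: each of the nine basis elements of such a coset is literally the product of $\T_{s^i}$, $\T_{\bm d, B_{sts}}$ and $\T_{s^j}$. Theorem \ref{characterized} then applies. The symmetry relation $r_{s^ids^j} = r_{s^jds^i}$ for $d = t$ yields \textsc{\tiny [S7]}, \textsc{\tiny [S9]} and the first equality of \textsc{\tiny [S1]}, and for $d = t^2$ it yields \textsc{\tiny [S8]}, \textsc{\tiny [S10]} and the first equality of \textsc{\tiny [S2]}. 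The coefficient relations \eqref{coeffrels} in the $(i,j) = (1,1)$ case (first branch) produce \textsc{\tiny [S3]} and \textsc{\tiny [S4]}; the $(2,2)$ case (second branch) produces \textsc{\tiny [S5]} and \textsc{\tiny [S6]}; and the $(2,1)$ case (second branch, with $(1,2)$ redundant by symmetry) produces the right-hand equalities of \textsc{\tiny [S1]} and \textsc{\tiny [S2]}.

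The main non-trivial step is the direct verification that $\T_{ts^2t}$ commutes with $\T_s$, since this does not follow automatically from the group-theoretic fact $ts^2t \in Z_{G_4}(s)$; once this is in hand, the remainder of the proof is a bookkeeping exercise matching the outputs of Theorem \ref{characterized} against the list \textsc{\tiny [S1]}--\textsc{\tiny [S10]}.
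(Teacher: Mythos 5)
Your proposal is correct and follows essentially the same route as the paper: both decompose $h$ coset-by-coset using $s$-stability and obtain [S1]--[S6] from the coefficient relations \eqref{coeffrels} of Theorem \ref{characterized} applied to the two additive double cosets $\l s\r t\l s\r$ and $\l s\r t^2\l s\r$, with [S7]--[S10] and the left-hand equalities of [S1], [S2] coming from the symmetry relation \eqref{symmetry2}. The only difference is one of completeness rather than method: you explicitly verify the hypothesis $\T_{s^i}\T_{\bm d}\T_{s^j}=\T_{\bm{s^ids^j}}$ by inspection of $B_{sts}$ and carry out the braid-relation check that $\T_{ts^2t}$ (hence the whole centralizing coset) commutes with $\T_s$, a computation the paper leaves implicit here and records later in the proof of Proposition \ref{s-elts}.
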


\newcommand{\rel}[1]{\textsc{\tiny [#1]}}

\begin{proof}
  These relations follow from Theorem \ref{characterized} because both of the non-centralizing $s$-$s$ double cosets are additive ($d=t$ or $t^2$).  The first six relations $\rel{S1}$ to $\rel{S6}$ follow from these two centralizing double cosets and equations \eqref{coeffrels}.  The latter four $\rel{S7}$ - $\rel{S10}$ (and parts of the first two) are the symmetric relations \eqref{symmetry2}.
\end{proof}

This tells us that to find elements of the centralizer of $\T_s$ in $\H$ all we need do is set values for the coefficients of form $r_{ds^j}$ for each $j=0,1$, or $2$ and $d=t$ or $t^2$.  The elements of these forms --- $d$, $ds$ and $ds^2$ --- are shortest elements of $s$-conjugacy classes.

\begin{cor}
\label{cor:basis+classes+g4}
The centralizer $Z_\H(\T_s)$ of a generator $\T_s$ of the Hecke algebra $\H$ of $G_4$ has an integral basis indexed by the $s$-conjugacy classes whose elements specialize to $s$-conjugacy class sums.
\end{cor}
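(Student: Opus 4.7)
The plan is to apply Proposition \ref{basis} with $J=\{s\}$, followed by Corollary \ref{cor:basis+classes}, to the centralizer $Z_\H(\T_s)$, working with respect to the $s$-stable reduced basis $B_{sts}$ (whose existence is guaranteed by Lemma \ref{lem:sgoodbasis:G4}). All of the ingredients needed are already in hand: Corollary \ref{cor:coeffrelsg4} supplies the explicit coefficient relations, and the fact that every $\langle s\rangle$-$\langle s\rangle$ double coset in $G_4$ is either centralising or additive ensures that these relations are exhaustive.

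First I would enumerate the $s$-conjugacy classes of $G_4$. Since $s$ has order $3$, every such class has size $1$ or $3$. The six elements of $Z_{G_4}(s)=\{1,s,s^2,ts^2t,t^2st^2,ststst\}$ each form their own singleton class, while the eighteen elements in the two additive double cosets $\langle s\rangle t\langle s\rangle$ and $\langle s\rangle t^2\langle s\rangle$ split into six classes of size three, giving twelve classes in all. I would then take as the distinguished set
\[\mathcal M=\{1,s,s^2,ts^2t,t^2st^2,ststst,t,ts,ts^2,t^2,t^2s,t^2s^2\},\]
one representative per $s$-class, chosen precisely so that its members are the ``free'' coefficients appearing on the right-hand sides of the relations $\rel{S1}$--$\rel{S10}$ in Corollary \ref{cor:coeffrelsg4}. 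A short count confirms that those ten relations express each of the twelve remaining basis coefficients as an $R$-linear combination of the coefficients of the elements of $\mathcal M$. The hypotheses of Proposition \ref{basis} are therefore satisfied and it immediately produces an integral basis $\mathcal B$ of $Z_\H(\T_s)$ indexed by $\mathcal M$, equivalently by the twelve $s$-conjugacy classes.

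The main, and essentially only, remaining step is to verify the extra hypothesis of Corollary \ref{cor:basis+classes}. This requires that in each of the relations the coefficient $1$ is attached exactly to the element of $\mathcal M$ that is $s$-conjugate to the left-hand side, while every other coefficient specialises to $0$ under $\rho$. The latter half is automatic because the remaining coefficients are polynomials in $\xi_1,\xi_2$ with no constant term. The former half is a finite case-check across $\rel{S1}$--$\rel{S10}$; for instance $\rel{S3}$ gives $r_{sts}=r_{ts^2}+\xi_1r_{ts}-\xi_2r_t$, and the $s$-class of $sts$ is indeed $\{ts^2,sts,s^2t\}$, whose representative in $\mathcal M$ is $ts^2$. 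Running through the remaining nine relations in the same spirit completes the verification, after which Corollary \ref{cor:basis+classes} guarantees that $\rho(\mathcal B)$ is the set of $s$-conjugacy class sums, finishing the proof.
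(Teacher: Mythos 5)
Your proposal is correct and follows essentially the same route as the paper: the paper's proof likewise invokes Proposition \ref{basis} together with Corollary \ref{cor:basis+classes}, citing the relations of Corollary \ref{cor:coeffrelsg4} with the distinguished set $\mathcal M=\{1,s,s^2,ts^2t,ts^2ts,tststs\}\cup\{t,ts,ts^2,t^2,t^2s,t^2s^2\}$, which coincides with yours as a set of group elements (since $ts^2ts=t^2st^2$ and $tststs=ststst$ in $G_4$). Your additional bookkeeping --- counting the twelve $s$-conjugacy classes and spot-checking that the coefficient $1$ lands on the $s$-conjugate representative while all other coefficients vanish under $\rho$ --- simply makes explicit the verification the paper leaves to the reader.
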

\begin{proof}
The relations given in Corollary \ref{cor:coeffrelsg4} satisfy the requirements of Proposition \ref{basis} and of Corollary \ref{cor:basis+classes}, with distinguished set \[\mathcal M=\{1,s,s^2,ts^2t,ts^2ts,tststs\}\cup\{t,ts,ts^2,t^2,t^2s,t^2s^2\}.\]  The Corollary follows.
\end{proof}

We have the following basis for the centralizer of $\T_s$ in $\H$.

\begin{prop}\label{s-elts}
Let $\H$ be the Hecke algebra of the group $G_4$, with basis $B_{sts}$.  The following elements together form an $R$-basis for $Z_\H(\T_s)$:

\begin{tabular}{llllll}
(1) $\T_1$,         &(2) $\T_{s}$,     & (3) $\T_{s^2}$,
& (4) $\T_{ts^2t}$,   &(5) $\T_{ts^2ts}$,& (6) $\T_{tststs}$,     \\
\multicolumn{3}{l}{(7) $\T_t+\T_{s^2ts}+\T_{sts^2}-\xi_2\T_{sts}$,}&
\multicolumn{3}{l}{(8) $\T_{t^2}+\T_{s^2t^2s}+\T_{st^2s^2}-\xi_2\T_{st^2s}$,}\\
\multicolumn{3}{l}{(9)  $\T_{st}+\T_{ts}+\T_{s^2ts^2}+\xi_1\T_{sts}$,}&
\multicolumn{3}{l}{(10) $\T_{st^2}+\T_{t^2s}+\T_{s^2t^2s^2}+\xi_1\T_{st^2s}$,}\\
\multicolumn{6}{l}{(11)     $\T_{s^2t}+\T_{ts^2}+\T_{sts}+\xi_1\T_{s^2ts}+\xi_1\T_{sts^2}+\xi_2\T_{s^2ts^2}$,}\\
\multicolumn{6}{l}{(12) $\T_{s^2t^2}+\T_{t^2s^2}+\T_{st^2s}+\xi_1\T_{s^2t^2s}+\xi_1\T_{st^2s^2}+\xi_2\T_{s^2t^2s^2}$.}
\end{tabular}

\end{prop}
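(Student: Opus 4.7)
The plan is to apply Proposition~\ref{basis} (via Corollary~\ref{cor:basis+classes+g4}) by exhibiting a distinguished set $\mathcal M$ of twelve representatives, one per $s$-conjugacy class of $G_4$, and then verifying that each of the twelve listed elements (a) lies in $Z_\H(\T_s)$ and (b) contains a unique distinguished basis term with coefficient~$1$.

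First I would enumerate the twelve $s$-conjugacy classes. Since $|Z_{G_4}(s)|=6$ there are six singleton classes $\{1\},\{s\},\{s^2\},\{ts^2t\},\{ts^2ts\},\{tststs\}$ (using $ts^2ts=t^2st^2$ and $tststs=ststst$, identities that lift to equalities in $\H$ via braid moves), together with the six size-three classes arising from the two additive double cosets $\langle s\rangle t\langle s\rangle$ and $\langle s\rangle t^2\langle s\rangle$. I would take
\[\mathcal M=\{1,s,s^2,ts^2t,ts^2ts,tststs,t,t^2,ts,t^2s,ts^2,t^2s^2\}.\]

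For membership in $Z_\H(\T_s)$, elements (1)--(3) commute trivially with $\T_s$; for (4)--(6) I would reduce both $\T_s\T_w$ and $\T_w\T_s$ to a common expression using braid moves in the $sts$-basis (for instance $\T_s\T_{ts^2t}=\T_{sts^2t}=\T_{tstst}=\T_{ts^2ts}=\T_{ts^2t}\T_s$, and $\T_{tststs}=\T_{ststst}$ is the image of the group centre and hence central in $\H$). For (7)--(12) I would read off the coefficients and check directly that the ten relations [S1]--[S10] of Corollary~\ref{cor:coeffrelsg4} are satisfied; the specific $\pm\xi_j$ coefficients appearing there are exactly those forced by the two cases of equation~\eqref{coeffrels} in Theorem~\ref{characterized} applied to $\langle s\rangle t\langle s\rangle$ and $\langle s\rangle t^2\langle s\rangle$.

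Finally I would verify condition~\ref{Bcriterion2} of Proposition~\ref{basis} by direct inspection. The one nontrivial point is the choice of representative for the classes $\{ts^2,s^2t,sts\}$ and $\{t^2s^2,s^2t^2,st^2s\}$: the term $\T_{sts}$ appears with nonzero coefficient in both (7) and (11), and $\T_{st^2s}$ in both (8) and (12), so I must select $ts^2$ and $t^2s^2$ (rather than $sts$ or $st^2s$) as the distinguished representatives; each of $\T_{ts^2}$ and $\T_{t^2s^2}$ then appears with coefficient~$1$ in exactly one listed element and with coefficient~$0$ in every other. Proposition~\ref{basis} then concludes that the twelve elements form an $R$-basis for $Z_\H(\T_s)$.
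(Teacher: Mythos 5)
Your proposal is correct and takes essentially the same route as the paper's own proof: braid-move verification for elements (1)--(6) (including the same computation showing $\T_{ts^2t}$ commutes with $\T_s$), checking the relations of Corollary~\ref{cor:coeffrelsg4} for the additive-coset elements (7)--(12), and invoking Proposition~\ref{basis} with the same distinguished set $\mathcal M$ --- your explicit observation that $\T_{sts}$ and $\T_{st^2s}$ recur across several listed elements, forcing $ts^2$ and $t^2s^2$ as representatives, merely makes precise what the paper leaves to inspection. One small caution: your parenthetical justification of (6) via ``the image of the group centre'' is exactly the inference the paper warns is invalid for complex reflection groups ($d\in Z_W(s)$ does not imply $\T_{\bm d,B}\in Z_\H(\T_s)$), but your fallback of reducing $\T_s\T_w$ and $\T_w\T_s$ to a common word by braid moves --- equivalently, noting $(\T_s\T_t)^3$ is the image of the central element of the braid group $B_3$ --- repairs this, matching how the paper deduces (5) and (6) from the computation for (4).
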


\begin{proof} That elements (1) to (6) commute with $\T_s$ is easily checked.  In fact, the first three are entirely trivial and the second three follow from checking the fourth: \[\T_{ts^2t}\T_s=\T_t\T_s\T_s\T_t\T_s=\T_t\T_s\T_t\T_s\T_t=\T_s\T_t\T_s\T_s\T_t=\T_s\T_{ts^2t}.\]

The remaining elements (7) to (12) all correspond to double cosets which are additive (with distinguished elements $t$ or $t^2$), which means that Corollary \ref{cor:coeffrelsg4} applies.  The coefficients in these elements all satisfy the relations of Corollary \ref{cor:coeffrelsg4}, so are all in $Z_\H(\T_s)$.  They also each specialize to an $s$-conjugacy class sum under $\rho$ (recalling that $\rho(\xi_1)=\rho(\xi_2)=0$), and together with the first six they correspond to a complete set of the twelve $s$-conjugacy classes in $G_4$.

Finally, each contains at least one  ``distinguished" term which does not appear in any other element.  Thus the set satisfies the requirements of Proposition \ref{basis} and so is an $R$-basis for $Z_\H(\T_s)$.
\end{proof}

Note that the basis for $Z_\H(\T_s)$ given above is in terms of the basis $B_{sts}$ for the Hecke algebra $\H$.  Exchanging each $s$ in the theorem with $t$ and vice versa we obtain a basis for the centralizer of $\T_t$, however this basis will now be in terms of the basis $B_{tst}$ for $\H$:

\begin{cor}
Let $\H$ be the Hecke algebra of $G_4$, with respect to $B_{tst}$.  The following elements together form an $ R$-basis for $Z_\H(\T_t)$:
\noindent
\begin{tabular}{llllll}
(1) $\T_1$,         & (2) $\T_{t}$,          & (3) $\T_{t^2}$,&
(4) $\T_{st^2s}$,   & (5) $\T_{s^2ts^2}$,    & (6) $\T_{ststst}$.\\
\multicolumn{3}{l}{(7) $\T_s+\T_{t^2st}+\T_{tst^2}-\xi_2\T_{tst}$,}&
\multicolumn{3}{l}{(8) $\T_{s^2}+\T_{t^2s^2t}+\T_{ts^2t^2}-\xi_2\T_{ts^2t}$,}\\
\multicolumn{3}{l}{(9) $\T_{ts}+\T_{st}+\T_{t^2st^2}+\xi_1\T_{tst}$,}&
\multicolumn{3}{l}{(10) $\T_{ts^2}+\T_{s^2t}+\T_{t^2s^2t^2}+\xi_1\T_{ts^2t}$,}\\
\multicolumn{6}{l}{(11) $\T_{t^2s}+\T_{st^2}+\T_{tst}+\xi_1\T_{t^2st}+\xi_1\T_{tst^2}+\xi_2\T_{t^2st^2}$,}\\
\multicolumn{6}{l}{(12) $\T_{t^2s^2}+\T_{s^2t^2}+\T_{ts^2t}+\xi_1\T_{t^2s^2t}+\xi_1\T_{ts^2t^2}+\xi_2\T_{t^2s^2t^2}$.}
\end{tabular}
\end{cor}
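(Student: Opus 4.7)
The plan is to obtain this basis for free from the symmetry of $G_4$ under the diagram involution $\sigma$ that swaps $s$ and $t$. First, I would verify that $\sigma$ lifts to an $R$-algebra automorphism of $\H$. The braid relation $sts=tst$ is symmetric in $s$ and $t$, and the deformed order relations $\T_s^3=1+\xi_1\T_s+\xi_2\T_{s^2}$ and $\T_t^3=1+\xi_1\T_t+\xi_2\T_{t^2}$ share the same coefficients $\xi_1,\xi_2$ because $s$ and $t$ are conjugate pseudo-reflections (as noted in Section \ref{sec:g4.hecke.algebra}). Hence setting $\sigma(\T_s)=\T_t$ and $\sigma(\T_t)=\T_s$ and extending multiplicatively and $R$-linearly is well-defined, and is clearly an involution.

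Second, since $\sigma(\T_s)=\T_t$, the automorphism $\sigma$ carries $Z_\H(\T_s)$ isomorphically onto $Z_\H(\T_t)$. Applying $\sigma$ to each of the twelve basis elements of $Z_\H(\T_s)$ given in Proposition \ref{s-elts} therefore produces twelve $R$-linearly independent elements of $Z_\H(\T_t)$ that span it. A term-by-term inspection confirms that these images are exactly the twelve elements listed in the Corollary statement.

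The main subtlety will be tracking how $\sigma$ acts on the two bases $B_{sts}$ and $B_{tst}$. On the eighteen elements common to both bases (those whose reduced expressions already coincide), $\sigma$ merely permutes basis elements among themselves. On the remaining six elements the chosen reduced expressions differ between $B_{sts}$ and $B_{tst}$. For three of these (namely $\T_{sts}$, $\T_{sts^2}$, $\T_{s^2ts}$, which equal $\T_{tst}$, $\T_{t^2st}$, $\T_{tst^2}$ respectively by the braid relation alone), the equality in $\H$ is immediate. For the other three, the identities \eqref{t2s2t} show that any discrepancy is absorbed by lower-order correction terms which are themselves manifestly $\sigma$-symmetric, so after simplification the images of elements (8), (10) and (12) of Proposition \ref{s-elts} match the corresponding entries of the Corollary.

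As an alternative, bypassing the automorphism, one could run the proof of Proposition \ref{s-elts} verbatim in the dual setting: Lemma \ref{lem:sgoodbasis:G4} provides that $B_{tst}$ is $t$-stable, Theorem \ref{characterized} immediately yields the $t$-analogue of Corollary \ref{cor:coeffrelsg4} (with $s$ and $t$ swapped), one checks that the twelve listed elements satisfy those coefficient relations and each specialize under $\rho$ to a $t$-conjugacy class sum, and finally each contains a distinguished basis term absent from the others so that Proposition \ref{basis} delivers the basis property.
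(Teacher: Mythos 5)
Your proposal is correct and is essentially the paper's own argument: the paper obtains this corollary from Proposition \ref{s-elts} precisely by the $s\leftrightarrow t$ symmetry of the presentation of $\H$ (stated as ``exchanging each $s$ with $t$ and vice versa''), which your diagram automorphism $\sigma$ merely makes explicit, and your alternative route of rerunning Proposition \ref{s-elts} with $B_{tst}$ via Lemma \ref{lem:sgoodbasis:G4}, Theorem \ref{characterized} and Proposition \ref{basis} is the same argument unfolded. One small simplification: the appeal to the identities \eqref{t2s2t} is unnecessary, since $\sigma$ carries each chosen reduced expression of $B_{sts}$ directly onto one of $B_{tst}$ (e.g.\ $\sigma(\T_{s^2t^2s})=\T_{t^2s^2t}$, and $\sigma(\T_{ts^2ts})=\T_{st^2st}=\T_{s^2ts^2}$ by the braid relation alone), so no lower-order correction terms ever arise.
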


\subsubsection{Re-writing bases for centralizers}\label{sec:g4.hecke.bases.rewriting}
As noted previously, we would like to combine the information we have about the centralizers $Z_\H(\T_s)$ and $Z_\H(\T_t)$ to obtain a basis for the centre, and for this reason we need to write both bases with respect to the same basis for $\H$.  Making an arbitrary (though alphabetically pleasing) choice, we will write both in terms of $B_{sts}$, meaning we now need to transform the basis for $Z_\H(\T_t)$ (given with respect to $B_{tst}$) into the basis $B_{sts}$, and we will use the relations found in Section \ref{sec:g4.centralizers}.

We omit the computations and summarize the results as follows:

\begin{prop}\label{t-elts}
Let $\H$ be the Hecke algebra of $G_4$ with basis $B_{sts}$.  An $R$-basis for $Z_\H(\T_t)$ is the set consisting of:

\noindent
\begin{tabular}{rllllll}
(1)&$\T_1$, &(2) $\T_{st^2s}$,&(3) $\T_t$,&(4) $\T_{s^2ts^2}$,&(5) $\T_{t^2}$,&(6) $\T_{ststst}$,\\
(7)&\multicolumn{3}{l}{ $\T_s+\T_{sts^2}+\T_{s^2ts}-\xi_2\T_{sts}$,}
&\multicolumn{3}{l}{(8) $\T_{ts}+\T_{st}+\T_{t^2st^2}+\xi_1\T_{sts}$,}\\
(9)& \multicolumn{6}{l}{$\T_{t^2s}+\T_{st^2}+\T_{sts}+\xi_1\T_{sts^2}+\xi_1\T_{s^2ts}+\xi_2\T_{t^2st^2}$,}\\
(10)&\multicolumn{6}{p{.85\textwidth}}{ $\T_{s^2}+\T_{st^2s^2}+\T_{s^2t^2s}+\xi_1\T_{sts}+\xi_2\T_{ts^2t}+\xi_1\xi_2\T_{t^2st^2}+\xi_1(\T_{s^2t}+\T_{ts^2})+ \xi_1^2(\T_{sts^2}+\T_{s^2ts})$,}\\
(11)&\multicolumn{6}{p{.85\textwidth}}{ $\T_{ts^2}+\T_{s^2t}+\T_{s^2t^2s^2}+\xi_1\xi_2(\T_{s^2t}+\T_{ts^2})+(\xi_1+\xi_2^2)\T_{ts^2t} +\xi_2(1+\xi_1\xi_2)\T_{t^2st^2}+\xi_1(1+\xi_1\xi_2)(\T_{sts^2}+\T_{s^2ts})$,}\\
(12)&\multicolumn{6}{p{.85\textwidth}}{ $\T_{t^2s^2}+\T_{s^2t^2}+\T_{ts^2t}+\xi_1\xi_2\T_{ts^2t}+\xi_1^2\xi_2\T_{t^2st^2}+\xi_1^2\T_{sts}+ (\xi_1^2-\xi_2)(\T_{s^2t}+\T_{ts^2})+\xi_1^3(\T_{sts^2}+\T_{s^2ts})+\xi_1(\T_{st^2s^2}+\T_{s^2t^2s})$.}
\end{tabular}
\end{prop}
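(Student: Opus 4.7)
The plan is to start from the basis for $Z_\H(\T_t)$ expressed with respect to $B_{tst}$ (given in the Corollary immediately preceding this proposition) and rewrite each basis element in terms of $B_{sts}$. Since the two bases $B_{sts}$ and $B_{tst}$ agree on eighteen of their twenty-four elements, only the six ``problematic'' elements need attention: $\T_{tst}$, $\T_{t^2st}$, $\T_{tst^2}$, $\T_{ts^2t^2}$, $\T_{t^2s^2t}$, and $\T_{t^2s^2t^2}$ must be replaced by their $B_{sts}$-expressions.

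First I would handle the three elements for which the replacement is trivial: by repeated application of the braid relation $\T_s\T_t\T_s\T_t = \T_t\T_s\T_t\T_s$ alone, one obtains the clean identities
\[\T_{tst}=\T_{sts},\qquad \T_{t^2st}=\T_{sts^2},\qquad \T_{tst^2}=\T_{s^2ts},\]
so in every basis element of $Z_\H(\T_t)$ these three symbols can simply be renamed. This immediately converts elements (1)--(9) of the $B_{tst}$-version into their $B_{sts}$-versions with no new terms appearing; for instance, element (7) $\T_s+\T_{t^2st}+\T_{tst^2}-\xi_2\T_{tst}$ becomes exactly $\T_s+\T_{sts^2}+\T_{s^2ts}-\xi_2\T_{sts}$ as stated.

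Next I would treat the remaining three elements (10)--(12), which are the ones involving $\T_{ts^2t^2}$, $\T_{t^2s^2t}$, or $\T_{t^2s^2t^2}$. Here one applies the three explicit relations \eqref{t2s2t}, namely
\begin{align*}
\T_{ts^2t^2}&=\T_{s^2t^2s}-\xi_2(\T_{st^2s}-\T_{ts^2t})-\xi_1(\T_{t^2s}-\T_{ts^2}),\\
\T_{t^2s^2t}&=\T_{st^2s^2}-\xi_2(\T_{st^2s}-\T_{ts^2t})-\xi_1(\T_{st^2}-\T_{s^2t}),\\
\T_{t^2s^2t^2}&=\T_{s^2t^2s^2}-\xi_2(\T_{s^2ts^2}-\T_{t^2st^2})-\xi_2^2(\T_{st^2s}-\T_{ts^2t})\\
&\qquad{}-\xi_1\xi_2(\T_{t^2s}-\T_{s^2t}+\T_{st^2}-\T_{ts^2})-\xi_1(\T_s-\T_t),
\end{align*}
and substitutes, collecting like terms carefully. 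This is purely mechanical but is where all the correction terms (the $\xi_1^2$, $\xi_2^2$, $\xi_1\xi_2$ and $\xi_1^3$ coefficients visible in elements (10)--(12) of the statement) arise; the main obstacle is bookkeeping, particularly for element (12), where the substitution for $\T_{t^2s^2t^2}$ compounds with the $\xi_1$-multiples of the substitutions for $\T_{t^2s^2t}$ and $\T_{ts^2t^2}$.

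Finally, since each resulting element is obtained from the previous $R$-basis of $Z_\H(\T_t)$ by an identity in $\H$, the twelve elements still lie in $Z_\H(\T_t)$ and still span it. Linear independence is immediate from Proposition~\ref{basis}: each new element still contains a distinguished term (the same one as before rewriting, since the braid-equivalent renaming preserves the distinguished positions and the \eqref{t2s2t}-substitutions only introduce shorter terms) with coefficient $1$ that does not appear in any other basis element. Hence the rewritten set is an $R$-basis for $Z_\H(\T_t)$ with respect to $B_{sts}$, as claimed.
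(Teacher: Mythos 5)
Your strategy---rewrite the $B_{tst}$-basis of $Z_\H(\T_t)$ into $B_{sts}$-coordinates via the braid identities and the relations \eqref{t2s2t}---is the construction the paper itself alludes to (it omits the computation, and its proof merely verifies the conditions of Proposition~\ref{basis}). Your treatment of the braid-only elements is correct. But your claim that the \eqref{t2s2t}-substitutions, after collecting terms, produce elements (10)--(12) \emph{as listed} is false, and this is a genuine gap. Carrying out your own substitution on the element corresponding to (10), namely $\T_{s^2}+\T_{t^2s^2t}+\T_{ts^2t^2}-\xi_2\T_{ts^2t}$, gives
\[\T_{s^2}+\T_{st^2s^2}+\T_{s^2t^2s}-2\xi_2\T_{st^2s}+\xi_2\T_{ts^2t}+\xi_1(\T_{s^2t}+\T_{ts^2})-\xi_1(\T_{st^2}+\T_{t^2s}),\]
which is not element (10): the two differ by $\xi_1\cdot(9)+2\xi_2\cdot(2)$, a nonzero element of $\H$ since these are distinct $B_{sts}$-basis terms. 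The same happens for (11) and (12); for instance your rewrite of the twelfth element contains the term $\xi_2\T_{s^2t^2s^2}$, whereas the stated (12) contains no $\T_{s^2t^2s^2}$ at all. The listed elements are the \emph{normalized} basis elements in the sense of Proposition~\ref{basis} and Corollary~\ref{cor:t-elts:rels}: their coefficients vanish at every distinguished term of $\mathcal M=\{1,s,s^2,t,t^2,st,st^2,s^2t^2,st^2s,s^2ts^2,s^2t^2s^2,ststst\}$ other than their own, and your raw rewrites violate this (the terms $-2\xi_2\T_{st^2s}$ and $-\xi_1\T_{st^2}$ above sit exactly at the distinguished terms of elements (2) and (9)).

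This also breaks your independence argument: it is not true that the substitutions ``only introduce shorter terms'' preserving the distinguished positions---they introduce $\T_{st^2s}$, $\T_{st^2}$, $\T_s$, etc., which are precisely the distinguished terms of other listed elements, so condition (1) of Proposition~\ref{basis} fails for your rewritten set (that set is nevertheless still a basis, being literally the same twelve elements of $\H$ re-expressed). In other words, your argument proves that \emph{some} $R$-basis of $Z_\H(\T_t)$ exists in $B_{sts}$-coordinates, not that the set stated in the proposition is one. The missing step is the normalization: after rewriting, add suitable $R$-multiples of the other listed elements to clear the stray coefficients at $\mathcal M$ (e.g.\ stated (10) $=$ your rewrite $+\,\xi_1\cdot(9)+2\xi_2\cdot(2)$) and check the outcome matches the statement; alternatively, verify the listed elements directly against Proposition~\ref{basis} as the paper's proof does---each centralizes $\T_t$, each specializes to a $t$-class sum under $\rho$, and each contains a distinguished term appearing in no other element.
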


\begin{proof}
It suffices to check the conditions of Proposition~\ref{basis}.  That each of these centralizes $\T_t$ follows from their construction.  They each also specialize to a $t$-class sum in the group algebra.  It thus remains to check that each contains a distinguished element.

The first six terms are all singletons, and a quick check reveals they appear nowhere else (we removed them in preceding calculations).  For the remaining terms, the following elements respectively are distinguished: $\T_s$, $\T_{ts}$ or $\T_{st}$, $\T_{t^2s}$ or $\T_{st^2}$, $\T_{s^2}$, $\T_{s^2t^2s^2}$, and $\T_{t^2s^2}$ or $\T_{s^2t^2}$.
\end{proof}

We may now use the existence of this basis (with distinguished terms) for $Z_\H(\T_t)$ to determine necessary and sufficient relations between coefficients of terms in an element of the centralizer, with respect to the $sts$-basis: relations obtained via Theorem \ref{characterized} and Corollary~\ref{cor:coeffrelsg4} (modified by symmetry to be for $Z_\H(\T_t)$) are with respect to $B_{tst}$.

\begin{cor}\label{cor:t-elts:rels} Let $\H$ be the Hecke algebra of $G_4$ with basis $B_{sts}$.
The following group elements are distinguished (in the sense of \ref{basis}) in elements of the centralizer $Z_\H(\T_t)$:
\[\mathcal M=\{1,s,s^2,t,t^2,st,st^2,s^2t^2,st^2s,s^2ts^2,s^2t^2s^2,ststst\}.\]
Coefficients of remaining terms satisfy the following dependency relations on the coefficients of the distinguished terms:
  \[
  \begin{array}[t]{rr@{\,=\,}l}
  \rel{T1}&r_{s^2ts}=r_{sts^2}      &r_s+\xi_1 r_{st^2}+\xi_1^2r_{s^2}+\xi_1^3r_{s^2t^2}+\xi_1(1+\xi_1\xi_2)r_{s^2t^2s^2}\cr
  \rel{T2}&r_{s^2t^2s}=r_{st^2s^2}  &r_{s^2}+\xi_1r_{s^2t^2}\cr
  \rel{T3}&r_{sts}      &r_{st^2}+\xi_1r_{st}+\xi_1r_{s^2}+\xi_1^2r_{s^2t^2}-\xi_2r_s\cr
  \rel{T4}&r_{ts^2t}    &\xi_2r_{s^2}+(1+\xi_1\xi_2)r_{s^2t^2}+(\xi_1+\xi_2^2)r_{s^2t^2s^2}\cr
  \rel{T5}&r_{t^2st^2}  &r_{st}+\xi_2r_{st^2}+\xi_1\xi_2r_{s^2}+\xi_1^2\xi_2r_{s^2t^2}+\xi_2(1+\xi_1\xi_2)r_{s^2t^2s^2}\cr
  \rel{T6}&r_{ts^2}=r_{s^2t}        &\xi_1r_{s^2}+(1+\xi_1\xi_2)r_{s^2t^2s^2}+(\xi_1^2-\xi_2)r_{s^2t^2}\cr
  \rel{T7}&r_{st}                &r_{ts}\cr
  \rel{T8}&r_{st^2}                &r_{t^2s}\cr
  \rel{T9}&r_{s^2t}                &r_{ts^2}\cr
  \rel{T10}&r_{s^2t^2}                &r_{t^2s^2}.
  \end{array}
  \]
\end{cor}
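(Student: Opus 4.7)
The plan is to read the dependency relations directly off the explicit basis $\B=\{b_1,\dots,b_{12}\}$ for $Z_\H(\T_t)$ given in Proposition~\ref{t-elts}. By construction, each $b_i$ has a distinguished term $\T_{\bm{w_i}}$ with $w_i\in\mathcal M$ appearing with coefficient $1$ in $b_i$ and nowhere in any other $b_j$. Hence any $h=\sum_{g\in G_4}r_{\bm g}\T_{\bm g}\in Z_\H(\T_t)$ admits the unique expansion $h=\sum_{i=1}^{12}r_{\bm{w_i}}b_i$, and for every $g\in G_4$ we obtain
\[ r_{\bm g}=\sum_{i=1}^{12}r_{\bm{w_i}}\,[b_i:\T_{\bm g}], \]
where $[b_i:\T_{\bm g}]$ denotes the coefficient of $\T_{\bm g}$ in $b_i$. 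The relations \rel{T1}--\rel{T10} should be precisely the instances of this identity at the non-distinguished group elements.

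First I would confirm that the set $\mathcal M$ displayed in the corollary coincides with the collection of distinguished terms from Proposition~\ref{t-elts}: the six singletons $\T_1,\T_t,\T_{t^2},\T_{st^2s},\T_{s^2ts^2},\T_{ststst}$ account for half, and the list $\T_s,\T_{st},\T_{st^2},\T_{s^2},\T_{s^2t^2s^2},\T_{s^2t^2}$ identified in the proof of Proposition~\ref{t-elts} accounts for the rest. Then, for each non-distinguished group element $g$ in turn, I would extract the coefficient of $\T_{\bm g}$ from each $b_i$ (only $b_7,\dots,b_{12}$ can contribute, since $b_1,\dots,b_6$ are singletons) and assemble $r_{\bm g}$. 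For example, to establish \rel{T1}, $\T_{s^2ts}$ picks up coefficient $1$ from $b_7$, $\xi_1$ from $b_9$, $\xi_1^2$ from $b_{10}$, $\xi_1(1+\xi_1\xi_2)$ from $b_{11}$, and $\xi_1^3$ from $b_{12}$; summing against the distinguished coefficients $r_s,r_{st^2},r_{s^2},r_{s^2t^2s^2},r_{s^2t^2}$ reproduces the right-hand side of \rel{T1}. The symmetry $r_{s^2ts}=r_{sts^2}$ in the same relation follows immediately because $\T_{s^2ts}$ and $\T_{sts^2}$ appear with identical coefficients throughout the basis.

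The only real obstacle is bookkeeping. I would organise the check as a table whose rows are indexed by the non-distinguished elements appearing on the left of \rel{T1}--\rel{T10} and whose columns are $b_7,\dots,b_{12}$, filling each entry directly from Proposition~\ref{t-elts}. The ``symmetry'' relations \rel{T2} and \rel{T7}--\rel{T10} fall out almost immediately from the pairings already visible in the basis elements, while \rel{T3} and \rel{T5} are short since the relevant terms appear in few $b_i$. The most delicate cases will be \rel{T4} and \rel{T6}, concerning $r_{ts^2t}$ and $r_{ts^2}=r_{s^2t}$, because these terms receive contributions from $b_{10},b_{11},b_{12}$ whose coefficients involve products such as $\xi_1\xi_2$, $\xi_2^2$ and $\xi_1^2-\xi_2$; arithmetic error is the principal risk.

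For the converse direction, any $h$ whose coefficients satisfy \rel{T1}--\rel{T10} coincides termwise with $\sum_{i=1}^{12}r_{\bm{w_i}}b_i$, which lies in $Z_\H(\T_t)$ because each $b_i$ does. This is just Proposition~\ref{basis} applied to $\B$, so no further argument is needed.
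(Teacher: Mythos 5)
Your proposal is correct and matches the paper's own (implicit) argument: the paper derives this corollary precisely by expanding an arbitrary element of $Z_\H(\T_t)$ against the basis of Proposition~\ref{t-elts} via its distinguished terms, which is exactly your identity $r_{\bm g}=\sum_i r_{\bm{w_i}}[b_i:\T_{\bm g}]$, with the converse supplied by Proposition~\ref{basis}. Your coefficient extractions (e.g.\ for [T1] the contributions $1,\xi_1,\xi_1^2,\xi_1(1+\xi_1\xi_2),\xi_1^3$ from $b_7,b_9,b_{10},b_{11},b_{12}$) are all accurate, and your twelve relations cover all twelve non-distinguished elements, so nothing is missing.
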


  Proposition \ref{t-elts} and Corollary \ref{cor:t-elts:rels} exhibit an important departure from the analogy with the Hecke algebras of finite Coxeter groups.  The distinguished terms in $B_{sts}$ for $Z_\H(\T_s)$ (Cor. \ref{cor:coeffrelsg4}) and $B_{tst}$ for $Z_\H(\T_t)$ all were predictable shortest elements of $s$-conjugacy (resp. $t$-conjugacy) classes.  The relations among coefficients all preserved a dependency order consistent with the length function (even though the positivity found in the Coxeter group case is lost).  Once we convert the $Z_\H(\T_t)$-basis to being with respect to $B_{sts}$, we lose this dependency order.  Note that for instance $s^2t^2s^2$ is a distinguished term, and it is certainly not shortest in its $t$-conjugacy class $\{ts^2,s^2t,s^2t^2s^2\}$.

\subsection{The centre of the Hecke algebra}\label{sec:g4.centre}

\subsubsection{Via coefficient relationships}

We can combine the coefficient relationships in Corollaries \ref{cor:coeffrelsg4} and \ref{cor:t-elts:rels} to find necessary and sufficient relationships among coefficients of a central element.  Hecke algebra elements whose coefficients with respect to $B_{sts}$ satisfy all the relationships for $s$ and $t$ class elements must be in both centralizers and therefore the centre.

\begin{thm}
  Let $\H$ be the Hecke algebra of $G_4$ with basis $B_{sts}$.
  An element $h=\sum_{w\in G_4}r_{\bm w,B_{sts}}\T_{\bm w,B_{sts}}$ is in $Z(\H)$ if and only if the following relationships hold between coefficients $r_{\bm w,B_{sts}}$ (writing each $\bm w$ as an explicit reduced expression):
  \[
  \begin{array}[t]{rr@{\,=\,}l}
  \textsc{\tiny [C1]}&r_{s^2t}=r_{ts^2}&(1+\xi_1\xi_2)r_{t^2s}+\xi_1r_{s^2}+\xi_1(\xi_1+\xi_2^2)r_{t^2s^2}\cr
  \textsc{\tiny [C2]}&r_{sts}     &r_{t^2s}+\xi_1r_{ts}+\xi_1^2r_{t^2s^2}+\xi_1r_{s^2}-\xi_2r_{s}\cr
  \textsc{\tiny [C3]}&r_{t}       &r_s+\xi_1r_{t^2s}+\xi_1\xi_2r_{t^2s^2}\cr
  \textsc{\tiny [C4]}&r_{s^2ts}=r_{sts^2}   &r_s+\xi_1(2+\xi_1\xi_2)r_{t^2s}+\xi_1^2r_{s^2}+\xi_1(\xi_2+\xi_1\xi_2^2+\xi_1^2)r_{t^2s^2}\cr
  \textsc{\tiny [C5]}&r_{st^2s}   &(1-\xi_1\xi_2^2)r_{t^2s^2}+\xi_1(1-\xi_2)r_{t^2s}-\xi_2r_{s}\cr
  \textsc{\tiny [C6]}&r_{ts^2t}   &(1+2\xi_1\xi_2+\xi_2^2)r_{t^2s^2}+\xi_2r_{s^2}+(\xi_1+\xi_2^2)r_{t^2s}\cr
  \textsc{\tiny [C7]}&r_{s^2ts^2} &r_{ts}+\xi_2(1+\xi_1\xi_2)r_{t^2s}+\xi_1\xi_2r_{s^2}+\xi_1\xi_2(\xi_1+\xi_2^2)r_{t^2s^2}\cr
  \textsc{\tiny [C8]}&r_{t^2st^2} &r_{ts}+\xi_2(2+\xi_1\xi_2)r_{t^2s}+\xi_1\xi_2r_{s^2}+\xi_2(\xi_2+\xi_1\xi_2^2+\xi_1^2)r_{t^2s^2}\cr
  \textsc{\tiny [C9]}&r_{s^2t^2s}=r_{st^2s^2} &r_{s^2}+\xi_1r_{t^2s^2}\cr
  \textsc{\tiny [C10]}&r_{t^2}    &r_{s^2}\cr
  \textsc{\tiny [C11]}&r_{s^2t^2s^2}&r_{t^2s}+\xi_2r_{t^2s^2}\cr
  \textsc{\tiny [C12]}&r_{st}     &r_{ts}\cr
  \textsc{\tiny [C13]}&r_{st^2}   &s_{t^2s}\cr
  \textsc{\tiny [C14]}&r_{s^2t^2} &s_{t^2s^2}.
  \end{array}
  \]
  The set $\mathcal M=\{1,s,s^2,ts,t^2s,t^2s^2,tststs\}$ forms a distinguished set in the sense of Proposition \ref{basis}.  Consequently, $Z(\H)$ has an $R$-basis.
\end{thm}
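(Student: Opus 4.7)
The plan is to use the identification $Z(\H) = Z_\H(\T_s) \cap Z_\H(\T_t)$: an element $h = \sum_{w \in G_4} r_{\bm w, B_{sts}} \T_{\bm w, B_{sts}}$ lies in the centre if and only if its $B_{sts}$-coefficients satisfy both the $\rel{S}$-relations of Corollary \ref{cor:coeffrelsg4} and the $\rel{T}$-relations of Corollary \ref{cor:t-elts:rels}. Since the $\rel{T}$-relations have already been rewritten in the common basis $B_{sts}$ in Section \ref{sec:g4.hecke.bases.rewriting}, the problem is a purely algebraic one of simultaneously solving these two systems of linear relations in the 24 coefficients $r_{\bm w, B_{sts}}$.

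First I would apply the $\rel{S}$-system, which expresses 12 coefficients in terms of the 12 ``$s$-free'' ones indexed by the distinguished set of Corollary \ref{cor:basis+classes+g4}, namely $\{1, s, s^2, t, t^2, ts, ts^2, t^2s, t^2s^2, ts^2t, t^2st^2, ststst\}$. Next I would overlay the $\rel{T}$-system. A case-by-case inspection shows: the relations $\rel{T7}$--$\rel{T10}$ coincide with $\rel{S7}$--$\rel{S10}$; the relations $\rel{T4}$, $\rel{T5}$, $\rel{T6}$ directly assign values to the three $s$-free coefficients $r_{ts^2t}$, $r_{t^2st^2}$, $r_{ts^2}$ (after using $\rel{S}$ to rewrite their right-hand sides in terms of $\mathcal{M}$-coefficients); and the relations $\rel{T1}$, $\rel{T2}$, $\rel{T3}$ share left-hand sides with $\rel{S1}$, $\rel{S2}$, $\rel{S3}$, so that equating the two expressions for each shared coefficient yields three further linear identities purely in the $s$-free coefficients. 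From $\rel{T2}$ one immediately extracts the clean relation $r_{t^2} = r_{s^2}$ (that is, $\rel{C10}$), while $\rel{T1}$ combined with the $\rel{T6}$ expression for $r_{ts^2}$ determines $r_t$ as an $R$-polynomial in the $\mathcal{M}$-coefficients.

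Back-substituting these five expressions for $r_t, r_{t^2}, r_{ts^2}, r_{ts^2t}, r_{t^2st^2}$ into the $\rel{S}$-equations should produce precisely the formulas $\rel{C1}$--$\rel{C14}$, while the identity coming from $\rel{T3}$ should reduce to a tautology, confirming consistency. The converse direction is immediate by inspection: if the coefficients satisfy $\rel{C1}$--$\rel{C14}$ then they satisfy both the $\rel{S}$- and $\rel{T}$-systems, so $h$ centralises both $\T_s$ and $\T_t$. Finally, the existence of an $R$-basis of $Z(\H)$ follows by applying Proposition \ref{basis} with $J = S$: the set $\mathcal{M} = \{1, s, s^2, ts, t^2s, t^2s^2, tststs\}$ has cardinality seven (one per conjugacy class of $G_4$), and the $\rel{C}$-relations express every coefficient as an $R$-linear combination of those indexed by $\mathcal{M}$. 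The main obstacle is verifying that the elimination can be carried out over $R = \Z[\xi_1, \xi_2]$ without introducing denominators: the $\rel{T}$-relations contain polynomial coefficients such as $1 + \xi_1\xi_2$ and $\xi_1 + \xi_2^2$, and confirming that these combine cleanly demands careful bookkeeping. A useful sanity check at each stage is the specialisation $\rho:\xi_i \mapsto 0$, under which the $\rel{C}$-identities should collapse to standard conjugacy-class-sum identities in the group algebra of $G_4$.
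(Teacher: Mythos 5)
Your proposal takes essentially the same route as the paper: both work from $Z(\H)=Z_\H(\T_s)\cap Z_\H(\T_t)$, intersect the coefficient systems of Corollaries \ref{cor:coeffrelsg4} and \ref{cor:t-elts:rels} (already written in the common basis $B_{sts}$), eliminate over $R=\Z[\xi_1,\xi_2]$ to express every coefficient in terms of the seven indexed by $\mathcal M$, and then invoke Proposition \ref{basis} to obtain the $R$-basis of $Z(\H)$. The only slight difference is the converse, where your ``immediate by inspection'' really means the reversibility of the denominator-free linear elimination (i.e.\ checking that \textsc{[C1]}--\textsc{[C14]} imply back all the \textsc{[S]}- and \textsc{[T]}-relations), which is the same verification the paper packages as the existence of the central basis elements defined by the \textsc{[C]}-relations.
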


\begin{proof}
These all follow from the relations for coefficients in elements of $Z_\H(\T_s)$ and $Z_\H(\T_t)$, given in Corollaries \ref{cor:coeffrelsg4} and \ref{cor:t-elts:rels}.  Thus, they are necessary for centrality.

\rel{C1} follows from substituting \rel{S6} into \rel{T6}.  \rel{C2} is \rel{T3} with \rel{T9}.  \rel{C3} follows from \rel{S3}, \rel{C2} and \rel{C1}.  \rel{C4} follows from \rel{T1} and \rel{S6}.  \rel{C5} follows from \rel{S3} and \rel{C3}.  \rel{C6} follows from \rel{T4} and \rel{S6}.  \rel{C7} follows from \rel{S5} and \rel{C1}.  \rel{C8} follows from \rel{T5} and \rel{S6}.  \rel{C9} is \rel{T2}.  \rel{C10} follows from \rel{S2} and \rel{T2}.  \rel{C11} is \rel{S6}.

This implies the existence of an $R$-basis for $Z(\H)$ by Proposition \ref{basis}.  The relations are sufficient for centrality because of the existence of an $R$-basis whose elements are defined by \rel{C1} to \rel{C14}.  That is, if $h\in\H$ has coefficients satisfying relations \rel{C1} to \rel{C14} then it can be written as a linear combination of the basis elements, and hence is central.
\end{proof}

Because the relationships of the theorem are sufficient conditions for centrality, we may obtain a central element for each $\bm w\in\mathcal M$ by setting $r_{\bm w}=1$ and $r_{\bm{w'}}=0$ if $\bm{w'}\in\mathcal M\setminus\{\bm w\}$, as in the proof of Proposition \ref{basis}.  We thus obtain an integral basis for $Z(\H)$:

\begin{cor}
The set $\B=\{b_1,\dots,b_7\}$ forms an $R$-basis for the centre of the Hecke algebra of $G_4$, where
\begin{align*}
    b_1 &=\T_1,\\ %\hspace{5cm}
    b_2 &=\T_{ststst},\\
    b_3 &=\T_s+\T_t+T_{s^2ts}+\T_{sts^2}-\xi_2T_{sts} .\\
    b_4 &=\T_{ts}+\T_{st}+\T_{s^2ts^2}+\T_{t^2st^2}+\xi_1\T_{sts}\\ %\text{ (Figure \ref{fig:b4})}.\\
    b_5 &=\T_{s^2}+\T_{t^2}+\T_{st^2s^2}+\T_{s^2t^2s}+\xi_1^2(\T_{sts^2}+\T_{s^2ts})+\xi_1(\T_{s^2t}+\T_{ts^2})+\\
        &\quad\xi_1\T_{sts}+ \xi_2(\T_{ts^2t}-\T_{st^2s})+\xi_1\xi_2(\T_{t^2st^2}+\T_{s^2ts^2}). \\ %\text{ (Figure \ref{fig:b5})}.\\
    b_6 &=\T_{ts^2}+\T_{s^2t}+\T_{sts}+\T_{st^2}+\T_{t^2s}+\T_{s^2t^2s^2}+\xi_2\T_{t^2st^2}+ \xi_2^2\T_{ts^2t}+\xi_1\T_t+\\
        &\quad(2\xi_1+\xi_1^2\xi_2)(\T_{s^2ts}+\T_{sts^2})+\xi_1\xi_2(\T_{ts^2}+\T_{s^2t})+\xi_1(\T_{st^2s}+\T_{ts^2t})+\\
        &\quad\xi_2(1+\xi_1\xi_2)(\T_{s^2ts^2}+\T_{t^2st^2}).\\ % \text{ (Figure \ref{fig:b6})}.\\
    b_7 &=\T_{ts^2t}+\T_{s^2t^2}+\T_{t^2s^2}+\T_{st^2s}+\xi_2\T_{s^2t^2s^2}+\xi_1(\xi_1+\xi_2^2)(\T_{ts^2}+\T_{s^2t})+\\
        &\quad\xi_1(\xi_1^2+\xi_2+\xi_1\xi_2^2)(\T_{sts^2}+\T_{s^2ts})+\xi_1(\T_{st^2s^2}+\T_{s^2t^2s})+\xi_1\xi_2\T_t+ \\
        &\quad\xi_1^2\T_{sts}+\xi_2(2\xi_1+\xi_2^2)\T_{ts^2t}+\xi_2(\xi_2+\xi_1^2+ \xi_1\xi_2^2)\T_{t^2st^2}+\\
        &\quad\xi_1\xi_2(\xi_1+\xi_2^2)\T_{s^2ts^2} .
\end{align*}
\end{cor}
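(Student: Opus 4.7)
The plan is to invoke Proposition \ref{basis} directly. The preceding theorem delivers exactly what that proposition requires: a distinguished set $\mathcal M = \{1,s,s^2,ts,t^2s,t^2s^2,tststs\}$ of size $7$ (matching the number of conjugacy classes of $G_4$, hence the expected $R$-rank of $Z(\H)$), together with the family of relations \rel{C1}--\rel{C14} that expresses every coefficient $r_{\bm g, B_{sts}}$ as an $R$-linear combination of the coefficients $\{r_{\bm{w},B_{sts}} : w \in \mathcal M\}$, and that is both necessary and sufficient for membership in $Z(\H)$. Thus Proposition \ref{basis} (with $J = S$) yields an $R$-basis $\mathcal B = \{b_1,\dots,b_7\}$, where each $b_i$ is the unique central element satisfying $r_{\bm{w_i},B_{sts}} = 1$ and $r_{\bm{w_j},B_{sts}} = 0$ for $j \neq i$.

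To identify each $b_i$ explicitly, I would loop over the seven choices $w_i \in \mathcal M$ and, for each one, substitute the data $r_{\bm{w_i}} = 1$, $r_{\bm{w_j}} = 0$ ($j\neq i$) into the fourteen relations \rel{C1}--\rel{C14}. Each substitution reads off the coefficient of every non-distinguished basis vector $\T_{\bm g, B_{sts}}$ in $b_i$. The easy cases are $b_1$ (from $w = 1$, giving $b_1 = \T_1$), $b_2$ (from $w = tststs$, which represents the same group element as $ststst$, so $b_2 = \T_{ststst}$), and $b_3$ (from $w = s$: only \rel{C3}, \rel{C2}, \rel{C4} contribute nontrivially). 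I would correspondingly assign $b_4$ to $w = ts$, $b_5$ to $w = s^2$, $b_6$ to $w = t^2s$, and $b_7$ to $w = t^2s^2$; this matches the leading-term structure visible in the stated formulas.

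The only subtle step is to be consistent about identifications of different reduced expressions in $B_{sts}$: wherever the theorem's relations refer to $r_{\bm w}$ with $\bm w$ written in the form $s^i d s^j$ for $d \in \{t,t^2,ts^2t\}$, the corresponding basis element is literally $\T_{s^i d s^j}$ in $B_{sts}$, with no further rewriting needed (since $B_{sts}$ is $s$-stable by Lemma \ref{lem:sgoodbasis:G4}). Identities coming from the $t$-stable side that were absorbed into the relations of Corollary \ref{cor:t-elts:rels}, such as the ones of \eqref{t2s2t}, do not need to be reapplied — that work was already done to produce Corollary \ref{cor:t-elts:rels} and then fed into \rel{C1}--\rel{C14}.

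The main obstacle is simply the bookkeeping for $b_5, b_6, b_7$, since \rel{C1}, \rel{C4}, \rel{C5}, \rel{C6}, \rel{C7}, \rel{C8}, \rel{C9}, \rel{C11} all carry nonzero $R$-coefficients in front of at least one of $r_{s^2}, r_{t^2s}, r_{t^2s^2}$, producing polynomials in $\xi_1,\xi_2$ up to degree $3$; these yield the long expressions displayed. Once the substitutions are carried out, the seven elements lie in $Z(\H)$ by construction (they satisfy \rel{C1}--\rel{C14}, which are sufficient by the theorem), are $R$-linearly independent because each $b_i$ contains a distinguished term $\T_{\bm{w_i},B_{sts}}$ that is absent from every other $b_j$, and span $Z(\H)$ by the argument in the proof of Proposition \ref{basis}. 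Hence $\mathcal B$ is the desired $R$-basis.
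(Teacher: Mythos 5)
Your proposal is correct and takes essentially the same route as the paper: the paper's proof likewise invokes Proposition \ref{basis} together with the distinguished set $\mathcal M$ and the relations [C1]--[C14] of the preceding theorem, obtaining each $b_i$ as the unique central element with $r_{\bm{w_i}}=1$ and the remaining distinguished coefficients zero. One small caveat: your aside that for $b_3$ only [C2], [C3], [C4] contribute conflicts with [C5] \emph{as printed} (which has a $-\xi_2 r_s$ term and would force a nonzero coefficient on $\T_{st^2s}$); since the stated $b_3$ is verifiably central with no $\T_{st^2s}$ term, this points to a typo in the paper's [C5] (the last term should involve $r_{s^2}$, via [S4] and [C10]) rather than a flaw in your method.
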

\begin{proof}
The sets $\B$ and $\mathcal M$ and the relations among coefficients all satisfy the requirements of Proposition \ref{basis}, by our construction.
\end{proof}

\subsubsection{Via bases for centralizers}

In \cite{Fmb}, an algorithm was given for constructing ``minimal'' central elements from ``minimal'' basis elements of the set of centralizers of generators in the context of Hecke algebras of finite Coxeter groups.  We have not introduced a partial order or positive cone in the context of the Hecke algebras of complex reflection groups but these centralizer basis elements are analogous to the minimal basis of the real reflection group case.  Indeed, central elements can be constructed from the bases for the centralizers in much the same way, however it does not appear at this stage that this construction is algorithmic.

In Figures \ref{fig:b3} to \ref{fig:b7} the class elements $b_3$ to $b_7$ are represented by diagrams whose nodes are the terms of the element (including coefficients) and whose edges connect terms in the same $s$-class (or $t$-class) element.  Dashed lines connect elements in a common $t$-class element, while solid lines connect elements in an $s$-class element.  In some cases elements cancel each other out, and these pairs are shown by being joined by a double-dashed line ``===".

\begin{figure}[ht]  %%[ht]
    \begin{minipage}[t]{.45\linewidth}
        \centering
        \scalebox{0.8}{\includegraphics{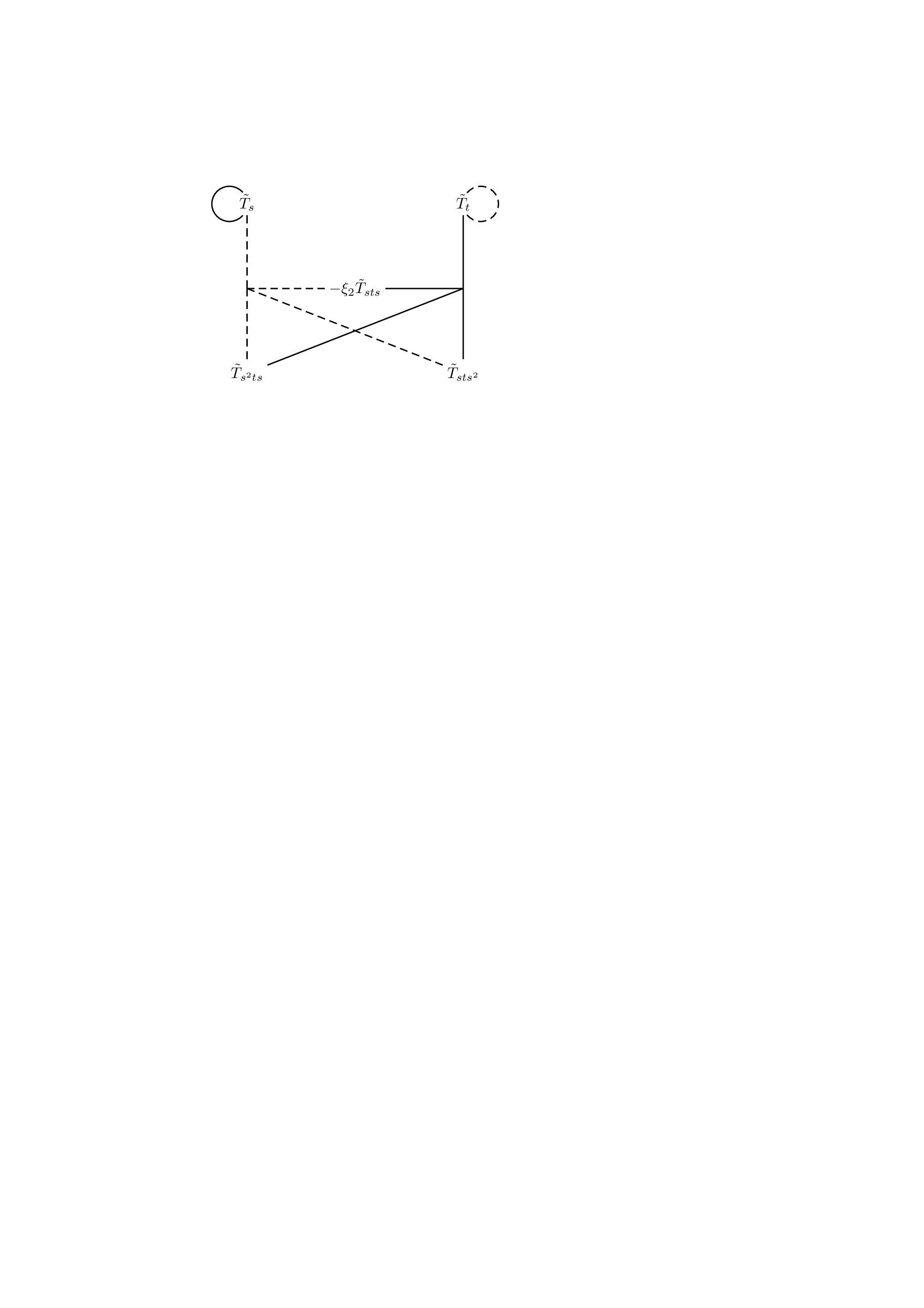}}%
        \caption{Diagram for the central basis element $b_3$ in type $G_4$ (Corollary \ref{cor:coeffrelsg4}).}\label{fig:b3}
    \end{minipage}%
    \begin{minipage}[t]{.05\linewidth}
    \text{}
    \end{minipage}
    \begin{minipage}[t]{.45\linewidth}
        \centering
        \scalebox{0.8}{\includegraphics{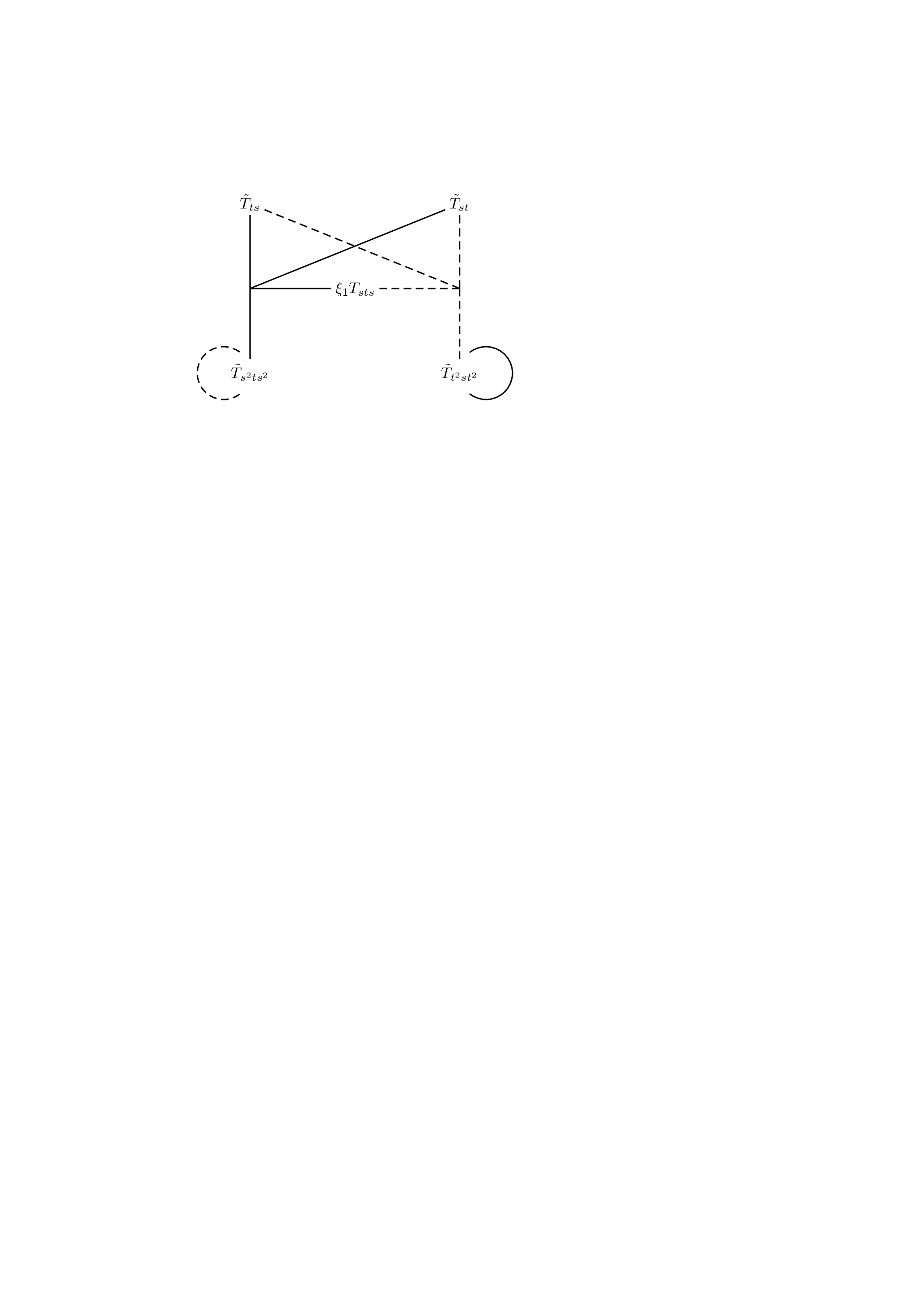}}%
        \caption{Diagram for the central basis element $b_4$ in type $G_4$ (Corollary \ref{cor:coeffrelsg4}).}\label{fig:b4}
    \end{minipage}
\end{figure}

\begin{figure}[h]  %%[ht]
\centerline{\scalebox{0.95}{\includegraphics{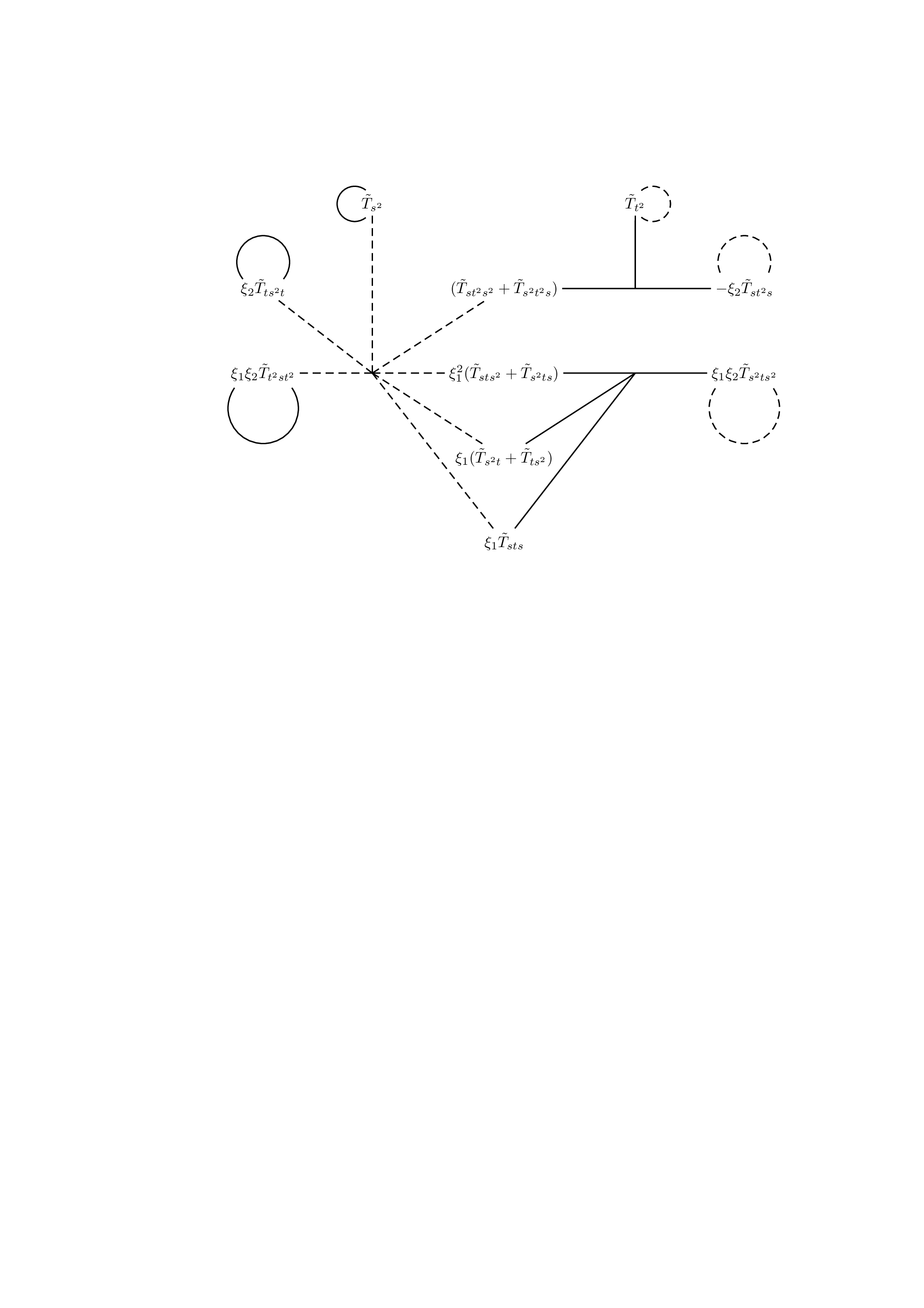}}}
\caption{Diagram for the central basis element $b_5$ in type $G_4$ (Corollary \ref{cor:coeffrelsg4}).}\label{fig:b5}
\end{figure}

\begin{figure}[h]  %%[ht]
\centering
\scalebox{0.85}{\includegraphics{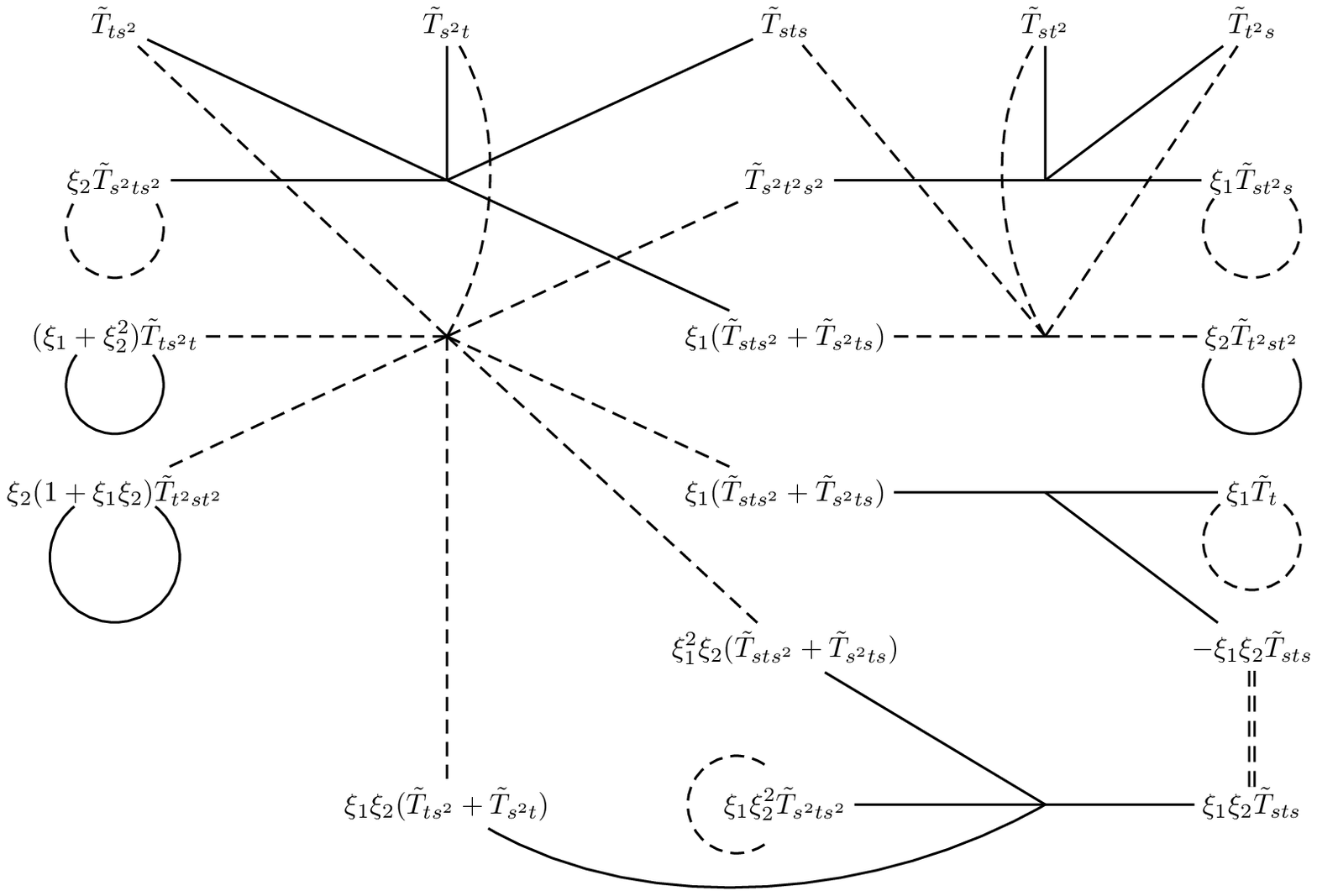}}
\caption{Diagram for the central basis element $b_6$ in type $G_4$ (Corollary \ref{cor:coeffrelsg4}).}\label{fig:b6}
\end{figure}

\begin{figure}[h]  %%[ht]
\centerline{\scalebox{0.7}{\includegraphics{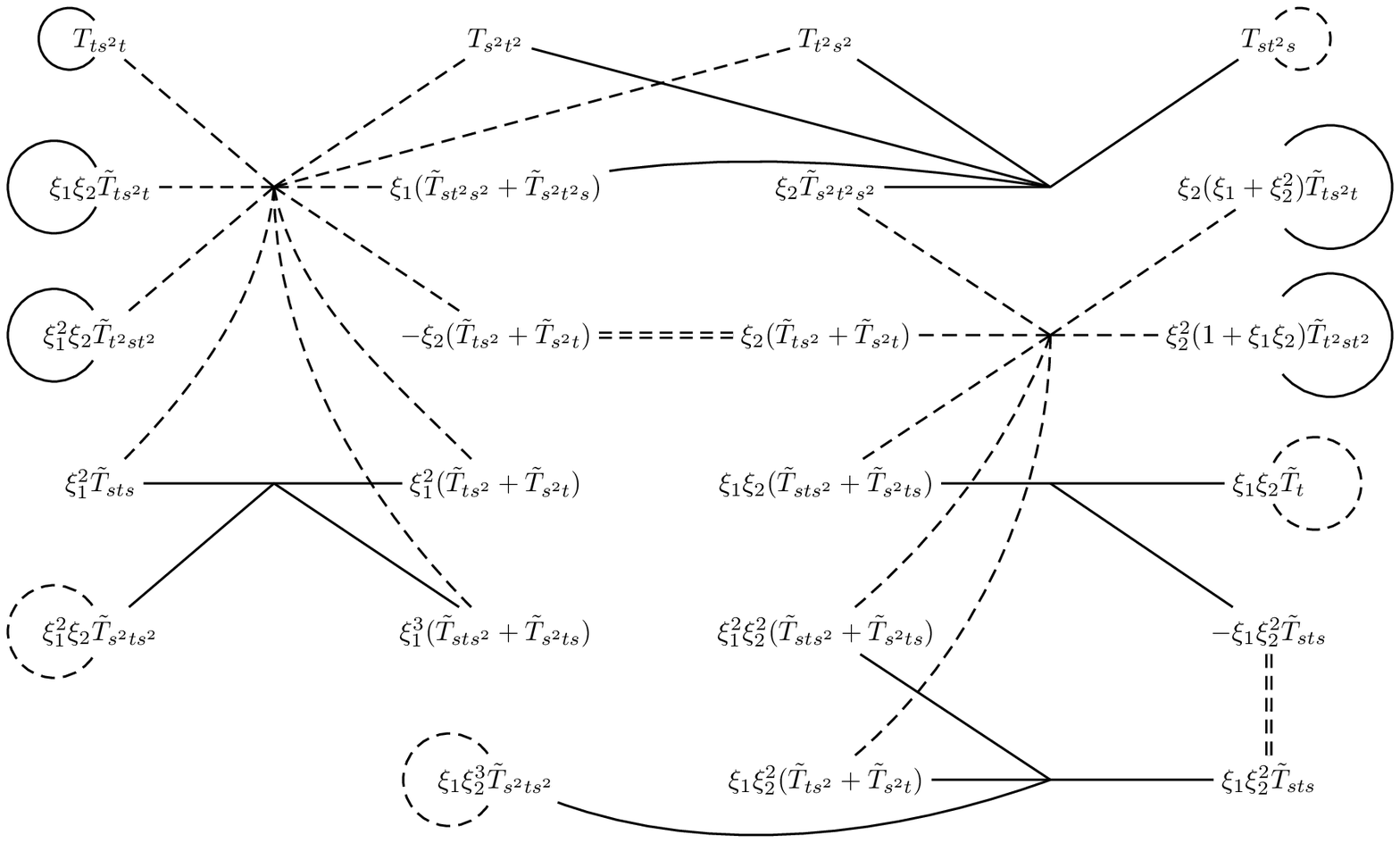}}}
\caption{Diagram for the central basis element $b_7$ in type $G_4$ (Corollary \ref{cor:coeffrelsg4}).}\label{fig:b7}
\end{figure}

\section{Types $G(r,1,n)$}\label{sec:Gr1n}

In this section we study the Hecke algebras of the family of groups $G(r,1,n)$.  Here not all double cosets are stable and it is necessary to consider the $\H$-double coset graph, as introduced in Section \ref{sec:bg:H.dcoset.graph}.  We describe completely the $\H$-double coset graph when $n=2$ in Section \ref{sec:gr12.d.coset.graphs}, and give integral bases for centralizers and the centre of the Hecke algebra of type $G(4,1,2)$ in Section \ref{sec:g412.bases}.

\subsection{Reduced expressions and relations}\label{sec:gr1n:reduced.exprs}

The generators and relations for the group $G(r,1,n)$ are given in Section \ref{sec:bg:cxrefgps}, and a Coxeter-type diagram is given in Figure \ref{fig:gr1n}.   The Hecke algebra here is defined over the ring $R=\Z[\xi_{t,i},\xi_{s}\mid 1\le i\le r-1]$; all $\{s_i\mid 1\le i\le n-1\}$ are conjugate so that the parameters $\xi_{s_i}=:\xi_s$ are equal.

Two elements of $W$ are said to be \emph{weakly braid equivalent} (\cite{BremMa97}) if they can be transformed into each other using just the braid relations and the relation $s_1t^as_1t^b=t^bs_1t^as_1$.

\begin{lem}[{\cite[(1.5)]{BremMa97}}]\label{lem:BM97:red_word}
  A reduced expression for any element of $G(r,1,n)$ is weakly braid equivalent to a (reduced) word of form
  \begin{equation}t_{0,a_0}\dots t_{n-1,a_{n-1}}\sigma\label{eq:lem:BM97:red_word}
  \end{equation}
  where $0\le a_i\le r-1$, $t_{k,a_i}:=s_k\dots s_1t^{a_i}$ for $0\le i\le n-1$ and $\sigma\in S_n$ is reduced.
\end{lem}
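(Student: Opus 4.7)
The plan rests on two ingredients: a counting argument showing that the words of form \eqref{eq:lem:BM97:red_word} enumerate $G(r,1,n)$ exactly once, and an induction on $n$ using weak braid moves to transform any reduced expression into this canonical form.

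First I would check that the $r^n n!$ words of form \eqref{eq:lem:BM97:red_word} represent distinct elements. This can be done concretely by computing the action of $t_{0,a_0}\cdots t_{n-1,a_{n-1}}\sigma$ on the standard basis of $\cx^n$: a direct calculation shows $t_{k,a}(e_1) = \zeta^a e_{k+1}$, while $t_{k,a}$ acts on $e_2, \dots, e_{k+1}$ by a cyclic permutation. From the image of $(e_1, \dots, e_n)$ one reads off the tuple $(a_0, \dots, a_{n-1})$ from the colors and then $\sigma$ from the residual permutation. Since $|G(r,1,n)| = r^n n!$, these words exhaust the group. The total number of generators in such an expression is $\sum_{i=0}^{n-1}(i + a_i) + l(\sigma)$, which is an upper bound on the length of the element it represents; verifying the matching lower bound (for instance, via the standard length formula on $G(r,1,n)$ in terms of inversions plus color contributions) establishes that the canonical form is reduced.

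For the weak braid equivalence, I would induct on $n$. The base case $n=1$ is immediate since $G(r,1,1) \cong \Z/r\Z$ and any reduced word is $t^a$, already canonical. For $n \geq 2$, given a reduced word $\bm w$, consider whether $s_{n-1}$ occurs. If not, $\bm w$ lies in $G(r,1,n-1)$ and the inductive hypothesis yields a canonical form (with $a_{n-1} = 0$ and $\sigma$ free of $s_{n-1}$). If it does, I would use the commutations $s_is_j = s_js_i$ for $|i-j| > 1$ and the braid relations $s_is_{i+1}s_i = s_{i+1}s_is_{i+1}$, together with the special relation $s_1 t^a s_1 t^b = t^b s_1 t^a s_1$, to gather one $s_{n-1}$ with a descending tail $s_{n-2} \cdots s_1 t^{a_{n-1}}$ into a single $t_{n-1, a_{n-1}}$ block positioned just before a terminal $S_n$-piece. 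Additional occurrences of $s_{n-1}$ can then be absorbed into the $\sigma$-factor via standard $S_n$ manipulations, and what remains to the left is a reduced word in $G(r,1,n-1)$ to which the inductive hypothesis applies.

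The main obstacle is justifying this rearrangement rigorously: one must verify that in a reduced word the $t$-powers and $s_{n-1}$'s can always be gathered into the required configuration via length-preserving weak braid moves alone. The relation $s_1 t^a s_1 t^b = t^b s_1 t^a s_1$ (derived by iterating $s_1 t s_1 t = t s_1 t s_1$ together with the commutativity of $t$ with itself) is essential for pushing $t$-powers through $s_1$; without it, such a movement would require using order relations, which do not preserve length. The uniqueness of the canonical form established in the counting step then ensures the target configuration is independent of the intermediate choices and gives a genuine normal form.
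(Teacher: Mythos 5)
The paper offers no internal proof to compare against: Lemma~\ref{lem:BM97:red_word} is quoted directly from \cite[(1.5)]{BremMa97}, so your attempt must stand on its own, and it has a genuine gap at precisely the point you flag yourself. The whole content of the lemma is that the rearrangement can be achieved by \emph{length-preserving weak braid moves alone}, and this rewriting argument is asserted rather than carried out. Worse, the one mechanism you name is not available: no weak braid move lets a $t$-power cross a \emph{single} $s_1$; the relation $s_1t^as_1t^b=t^bs_1t^as_1$ only transports $t^b$ past the entire four-letter pattern $s_1t^as_1$ as a block, which is exactly why the normal form \eqref{eq:lem:BM97:red_word} carries interleaved powers $t^{a_i}$ inside the blocks instead of all colours gathered at one end, and why the proof in \cite{BremMa97} is a genuine case analysis of how a $t$ travels through a reduced word. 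Your parenthetical claim that this relation is ``derived by iterating $s_1ts_1t=ts_1ts_1$'' is also false, and the paper itself refutes it: braid-equivalent words have equal images in $\H$, yet Lemma~\ref{lem:BM97:red_word_diff} shows $\T_{s_1}\T_{t^a}\T_{s_1}\T_{t^b}-\T_{t^b}\T_{s_1}\T_{t^a}\T_{s_1}\neq 0$ for $a\ge 2$ --- weak braid equivalence is a strictly coarser relation than braid equivalence, not a consequence of it. Finally, your counting step cannot substitute for the rewriting argument: uniqueness of the target word says nothing about its \emph{reachability} under the allowed moves, which is a property of the rewriting system, not of the enumeration.

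There is a second concrete problem in the counting-and-reducedness step as you set it up. Under the literal reading $t_{k,0}=s_k\cdots s_1$ (which your matrix computation $t_{k,a}(e_1)=\zeta^ae_{k+1}$ uses), the $r^nn!$ words are indeed pairwise distinct as group elements, but they are \emph{not} all reduced: already in $G(r,1,2)$ the identity corresponds to $a_0=a_1=0$, $\sigma=s_1$, i.e.\ to the word $t_{0,0}t_{1,0}\sigma=s_1s_1$, so the ``matching lower bound'' you propose to verify does not exist, and since weak braid moves preserve word length the empty reduced word can never reach such a target. The intended normal form of \cite{BremMa97} (and the basis the paper actually uses in type $G(r,1,2)$, where degenerate pieces like $t^ast^0s$ are replaced by $t^a$) omits the block $t_{k,a_k}$ when $a_k=0$, its uncoloured $s$-chain being absorbed into $\sigma$; with that convention both your bijection computation and the bookkeeping in your induction must be redone. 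One further caution: invoking ``the standard length formula'' to certify reducedness risks circularity, since in \cite{BremMa97} that formula is established \emph{using} the normal form (1.5); you would need an independent derivation of the length function (say, directly in terms of inversions and colour contributions) before this step closes. The skeleton you propose --- distinctness via the monomial-matrix action, induction on $n$ peeling off the $t_{n-1}$-block --- is the right shape, but the two load-bearing steps are the ones left open.
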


We will set the basis for $\H$ to be the set $B=\{\T_{\bm w,B}\}$ with $w$ in the form of \eqref{eq:lem:BM97:red_word}.
As with $G_4$, we will simplify notation by omitting reference to the basis $B$ in the subscript of a Hecke algebra term when the reduced expression is given explicitly.
To find bases for the centralizers with our approach we first need to establish which double cosets are stable, and of those, which are centralizing, additive, or neither.
The following will be useful:
\begin{lem}[{\cite[(2.2)]{BremMa97}}]\label{lem:BM97:red_word_diff}
Let $\H$ be the Hecke algebra of $G(r,1,n)$.  Then
\begin{equation}\label{eq:lem:BM97:red_word_diff}    \T_{s_1}\T_{t^a}\T_{s_1}\T_{t^b}=\T_{t^b}\T_{s_1}\T_{t^a}\T_{s_1}+\xi_s\sum_{i=1}^b(\T_{t}^{a+b-i}\T_{s_1}\T_{t^i}-\T_{t^i}\T_{s_1}\T_{t}^{a+b-i})
\end{equation}
for $1\le a\le r-1$ and $0\le b\le r-1$.
\end{lem}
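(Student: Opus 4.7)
The plan is to prove the identity by induction on $b$, with the essential step being the base case $b=1$.

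First I would establish a preliminary identity asserting that $\T_{s_1}\T_t\T_{s_1}$ commutes with every power $\T_{t^a}$:
\[
\T_{s_1}\T_t\T_{s_1}\T_{t^a}=\T_{t^a}\T_{s_1}\T_t\T_{s_1} \qquad (a\ge 0).
\]
This falls out by induction on $a$: the base case $a=1$ is exactly the braid relation $\T_{s_1}\T_t\T_{s_1}\T_t=\T_t\T_{s_1}\T_t\T_{s_1}$, and the inductive step peels off one $\T_t$ on the right of $\T_{s_1}\T_t\T_{s_1}\T_{t^{a+1}}$, applies the hypothesis to slide $\T_{t^a}$ to the far left, and invokes the braid relation once more to recombine the remaining $\T_{s_1}\T_t\T_{s_1}\T_t$ into $\T_t\T_{s_1}\T_t\T_{s_1}$.

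Next I would use this preliminary identity to obtain the $b=1$ case of the lemma. The quadratic relation $\T_{s_1}^2=1+\xi_s\T_{s_1}$ gives $\T_{s_1}^{-1}=\T_{s_1}-\xi_s$. Multiplying the preliminary identity on both left and right by $\T_{s_1}^{-1}$ yields $\T_t\T_{s_1}\T_{t^a}\T_{s_1}^{-1}=\T_{s_1}^{-1}\T_{t^a}\T_{s_1}\T_t$; substituting $\T_{s_1}-\xi_s$ for each $\T_{s_1}^{-1}$ and rearranging terms produces exactly
\[
\T_{s_1}\T_{t^a}\T_{s_1}\T_t=\T_t\T_{s_1}\T_{t^a}\T_{s_1}+\xi_s\bigl(\T_{t^a}\T_{s_1}\T_t-\T_t\T_{s_1}\T_{t^a}\bigr).
\]

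With the $b=1$ case in hand, the general statement is an induction on $b$. The base $b=0$ is trivial since the sum is empty. For the step I would write $\T_{s_1}\T_{t^a}\T_{s_1}\T_{t^b}=(\T_{s_1}\T_{t^a}\T_{s_1}\T_{t^{b-1}})\T_t$, invoke the inductive hypothesis on the bracketed factor, and then apply the $b=1$ identity to the leading term $\T_{t^{b-1}}\T_{s_1}\T_{t^a}\T_{s_1}\T_t$ that results. A mechanical reindexing (substituting $j=i+1$ in one of the two sums) allows the two endpoint correction terms $\T_{t^{a+b-1}}\T_{s_1}\T_t$ and $\T_{t^b}\T_{s_1}\T_{t^a}$ produced by the $b=1$ identity to be absorbed into the extended sum with range $i=1,\dots,b$, yielding the stated formula.

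The main obstacle is the $b=1$ case. A naive induction on $a$ directly on the target identity runs into trouble, because pushing $\T_{s_1}$ past successive $\T_t$'s forces repeated use of the quadratic relation and produces a cascade of correction terms whose aggregation is not transparent. The ``dual'' preliminary identity sidesteps this by keeping $\T_{s_1}$ in the outermost positions throughout the induction, so only the braid relation on $\T_t$'s is exercised; the quadratic relation is then invoked only once, at the very end, to convert this dual statement into the $b=1$ case.
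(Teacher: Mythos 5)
Your proof is correct. Note first that the paper does not actually prove this lemma --- it imports it verbatim from Bremke--Malle \cite[(2.2)]{BremMa97} --- so there is no in-paper argument to compare against; what you have supplied is a self-contained derivation from the defining relations, and it checks out. Your preliminary identity is just the observation that the braid relation makes $\T_{s_1}\T_t\T_{s_1}$ commute with $\T_t$, hence with $\T_t^a$; the conjugation trick is the nice step: since the normalized quadratic relation $\T_{s_1}^2=1+\xi_s\T_{s_1}$ gives $\T_{s_1}^{-1}=\T_{s_1}-\xi_s$, rewriting $\T_t\T_{s_1}\T_{t^a}\T_{s_1}^{-1}=\T_{s_1}^{-1}\T_{t^a}\T_{s_1}\T_t$ and expanding produces the $b=1$ case with the single correction term $\xi_s(\T_{t^a}\T_{s_1}\T_t-\T_t\T_{s_1}\T_{t^a})$, which matches the stated sum at $i=1$. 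The induction on $b$ then works exactly as you describe: multiplying the inductive hypothesis on the right by $\T_t$, applying the $b=1$ identity to $\T_t^{b-1}(\T_{s_1}\T_{t^a}\T_{s_1}\T_t)$, and checking that the two endpoint terms $\T_t^{a+b-1}\T_{s_1}\T_t$ and $-\T_t^b\T_{s_1}\T_t^a$ are precisely the $i=1$ and $i=b$ terms missing from the shifted sums, so the reindexing $j=i+1$ closes the induction. Two small points worth making explicit if you write this up: (i) the argument is entirely formal in the braid and quadratic relations, so it never needs to expand $\T_t^{a+b-i}$ when $a+b-i\ge r$ --- consistent with the lemma's notation, which deliberately writes these as powers $\T_t^{a+b-i}$ rather than basis elements $\T_{t^{a+b-i}}$; and (ii) your diagnosis of why a naive induction on $a$ stalls is apt --- keeping $\T_{s_1}$ in the outer positions and paying the quadratic relation exactly once is what keeps the correction terms from cascading.
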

Note that when $a+b\le r$, all terms on the right hand side of Eq.~\eqref{eq:lem:BM97:red_word_diff} are reduced.

\begin{lem}
  Let $\H$ be the Hecke algebra of $G(r,1,n)$ with reduced basis $B$.
  The double cosets $\l s_1\r t^as_1t^b\l s_1\r$ with $a>1$ are stable with respect to $B$ if and only if $b=0$ or $1$.
\end{lem}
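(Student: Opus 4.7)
The plan is to use Lemma \ref{lem:BM97:red_word_diff} to expand $\T_{s_1}\T_{\bm d,B}$ (where $d=t^as_1t^b$) as $\T_{s_1 d,B}$ plus an explicit correction, then determine when the correction lies in $\H_{W_JdW_J}$. As preparation, realising $G(r,1,n)\cong(\Z/r)^n\rtimes S_n$ shows that $s_1t^as_1$ commutes with $t^b$ as a group element, so
\[
\l s_1\r d\l s_1\r=\{t^as_1t^b,\ t^bs_1t^a,\ t^as_1t^bs_1,\ t^bs_1t^as_1\}.
\]
Written in the basis form $t^{a_0}\cdot s_1t^{a_1}\cdot\sigma$ of Lemma \ref{lem:BM97:red_word}, the only coset elements of shape $t^cs_1t^e$ are those with $(c,e)\in\{(a,b),(b,a)\}$, and the only elements with trailing $\sigma=s_1$ are $t^as_1t^bs_1$ and $t^bs_1t^as_1$.

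For the \textbf{sufficiency} direction ($b\in\{0,1\}$): when $b=0$ the sum in Lemma \ref{lem:BM97:red_word_diff} is empty, so $\T_{s_1}\T_{\bm d,B}=\T_{s_1 d,B}$ directly; the remaining products $\T_{\bm d,B}\T_{s_1}$ and $\T_{s_1}\T_{\bm d,B}\T_{s_1}$ follow using only the order relation $\T_{s_1}^2=\T_1+\xi_s\T_{s_1}$ together with the fact that $\T_{\bm{ds_1},B}=\T_{\bm d,B}\T_{s_1}$ is a basis element. When $b=1$ the sum contributes only the $i=1$ term $\xi_s(\T_{t^a}\T_{s_1}\T_t-\T_t\T_{s_1}\T_{t^a})$; the group identity $ts_1t^a=s_1ds_1$ (from the commutation above specialised to $b=1$) identifies this correction with $\xi_s(\T_{d,B}-\T_{s_1ds_1,B})$, which lies in $\H_{W_JdW_J}$, and the other products are again handled by the $\T_{s_1}^2$ relation.

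For the \textbf{necessity} direction ($b\ge2$): it suffices to display a single summand in $\T_{s_1}\T_{\bm d,B}$ outside $\H_{W_JdW_J}$. Lemma \ref{lem:BM97:red_word_diff} supplies the $i=1$ correction
\[
\xi_s\bigl(\T_t^{a+b-1}\T_{s_1}\T_t-\T_t\T_{s_1}\T_t^{a+b-1}\bigr).
\]
Expanding $\T_t^{a+b-1}=\sum_{j=0}^{r-1}c_j\T_{t^j}$ via Lemma \ref{lem:power.of.Ts} and noting that the only basis element appearing in both halves, namely $\T_{ts_1t,B}$ at $j=1$, occurs with cancelling coefficients, the survivor is an $R$-linear combination of basis elements of the form $\T_{t^js_1t,B}$ and $\T_{ts_1t^j,B}$ with $j\neq 1$. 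Since $a>1$ and $b\ge2$, neither $(j,1)$ nor $(1,j)$ with $j\neq 1$ equals $(a,b)$ or $(b,a)$, so each such basis element lies outside the double coset. The index-sum distinctness guaranteed by Lemma \ref{lem:power.of.Ts} ensures at least one $c_j$ is nonzero in $R$, so the correction is not in $\H_{W_JdW_J}$ and stability fails.

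The main technical obstacle is the wrap-around regime $a+b-1\ge r$, where $\T_t^{a+b-1}$ is not a single basis power of $\T_t$ but a linear combination of several, with coefficients that are polynomials in the $\xi_{t,i}$. The index-sum bookkeeping provided by Lemma \ref{lem:power.of.Ts} is precisely what rules out wholesale cancellation among the surviving basis elements and forces the correction to be genuinely nonzero; this is the step where the argument requires care rather than mere expansion.
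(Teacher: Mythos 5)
Your overall route is the paper's own: expand $\T_{s_1}\T_{\bm d,B}$ via Lemma \ref{lem:BM97:red_word_diff} and split into $b=0$ (empty sum, additivity plus Lemma \ref{lem:additive_implies_stable}), $b=1$ (a single correction term lying inside the coset), and $b\ge 2$ (out-of-coset correction). The sufficiency half is essentially right, with one caveat: the lemma is stated for an arbitrary reduced basis $B$, and for $b=1$ the four elements of $\l s_1\r t^as_1t\l s_1\r$ admit braid-inequivalent reduced expressions (e.g.\ $ts_1t^as_1$ versus $s_1t^as_1t$, which differ in $\H$ by in-coset terms); the paper checks stability for every such choice, whereas you silently fix the standard-form words. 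That is a minor, fixable omission.

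The genuine gap is in the necessity direction. You analyse only the $i=1$ summand of \eqref{eq:lem:BM97:red_word_diff}, but instability requires the \emph{whole} product $\T_{s_1}\T_{\bm d,B}$ to lie outside $\H_{W_JdW_J}$, and for $b\ge 2$ there are further summands $i=2,\dots,b$ which you never mention. When $a+b\le r$ they are harmless because the $i$th summand is supported in the distinct double coset $\l s_1\r t^{a+b-i}s_1t^i\l s_1\r$, but this needs saying. In the wrap-around regime the interaction is real: if also $a+b-i\ge r$, the $i$th summand's $j=1$ expansion terms are supported on precisely $\T_{t^is_1t}$ and $\T_{ts_1t^i}$, the same basis elements as the $j=i$ survivors of your $i=1$ term, and the net coefficient is a difference of two wrap-around coefficients (of $\T_{t^i}$ in $\T_t^{a+b-1}$ and of $\T_t$ in $\T_t^{a+b-i}$) lying in the \emph{same} index-sum congruence class, so Lemma \ref{lem:power.of.Ts} does not exclude their cancellation. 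Note also that within the $i=1$ term alone your appeal to index sums guards against a non-threat: distinct basis elements of a free module cannot cancel regardless of coefficients. The repair is to choose a witness only $i=1$ can reach --- $\T_{t^{a+b-1}s_1t}$ when $a+b\le r$, or the $j=0$ survivor $\T_{s_1t}$ when $a+b>r$ (no summand with $i\ge 2$ produces the exponent pattern $(0,1)$), whose total coefficient $\xi_sc_0$ is nonzero since the wrap-around coefficients are nonzero positive polynomials, as observed in the proof of Proposition \ref{prop:s-dcoset.graph.Gr12}. In fairness, the paper's own proof of this lemma is equally terse for $b>1$ and defers the full bookkeeping to Proposition \ref{prop:s-dcoset.graph.Gr12}; but your proposal explicitly flags the wrap-around regime as the step ``where the argument requires care'' and then does not supply the care where it is actually needed.
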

\begin{proof}  
    In the case $b=0$, the double coset is additive with distinguished representative $t^a$ and the sum on the right hand side of Eq.~\eqref{eq:lem:BM97:red_word_diff} reduces to elements of $\l s_1\r t^a\l s_1\r$ (in the same double coset).  Its additivity together with Lemma \ref{lem:additive_implies_stable} gives its stability.

In the case $b=1$, the double coset is not additive.  However the sum on the right hand side of Eq.~\eqref{eq:lem:BM97:red_word_diff} has only one term $\T_{t}^{a}\T_{s_1}\T_{t}-\T_{t}\T_{s_1}\T_{t}^{a}$, which is in %the span of the double coset
$\H_{\l s_1\r t^as_1t\l s_1\r}$.   There are several sets of reduced expressions for the four elements of $\l s_1\r t^is_1t\l s_1\r$, some of which are not equal in $\H$.  The reduced expressions are $t^is_1t$, $ts_1t^i$, $t^is_1ts_1=s_1ts_1t^i$, and $ts_1t^is_1=s_1t^is_1t$.  Whichever four of these is chosen as part of the reduced basis, the previous lemma together with the comments above imply stability.

If $b>1$ the sum on the right hand side of \eqref{eq:lem:BM97:red_word_diff} contains more than one term and for $i=1$ contains the expression $\T_{t}^{a+b-1}\T_{s_1}\T_{t}-\T_{t}\T_{s_1}\T_{t}^{a+b-1}$ whose terms are not in the double coset, and which is non-zero since $a+b>2$.  Instability follows immediately.
\end{proof}

Lemma \ref{lem:BM97:red_word_diff} is also useful for describing the $\H$-double coset graph when the double coset $\l s_1\r t^as_1t^b\l s_1\r$ is not stable.  We will return to this below.

\subsection{$\H$-double coset graphs in types $G(r,1,2)$}\label{sec:gr12.d.coset.graphs}

We focus now on the case $n=2$, setting $s:=s_1$ for the remainder of Section~\ref{sec:Gr1n}.  We will describe the double coset graphs for both $\l t\r$ and $\l s\r$. Fix the basis $B=\{\T_{t^ast^b},\T_{t^ast^bs}\mid 0\le a,b\le r-1\}$ of $\H$.

We begin with a description of minimal length double coset representatives, which follows immediately from the definition of the basis $B$ and Lemma \ref{lem:BM97:red_word}.

\begin{lem}The following form sets of minimal length representatives of double cosets in $G(r,1,2)$:
\begin{center}
    \begin{tabular}{rl}
    $\l t\r$-$\l t\r$ double cosets:&$\{1,s,st^as\mid 1\le a\le r-1\}$.\\
    $\l s\r$-$\l s\r$ double cosets:&$\{1,t^a,t^ast^b\mid 1\le b\le a\le r-1\}$.
    \end{tabular}
\end{center}
\end{lem}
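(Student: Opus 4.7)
The plan is to apply Lemma~\ref{lem:BM97:red_word} to write the $2r^2$ elements of $W = G(r,1,2)$ as reduced words
\[
t^{a_0}\, s\, t^{a_1}\, \sigma, \qquad 0\le a_0,a_1\le r-1,\ \sigma\in\{1,s\},
\]
and then sort them into double cosets by reading off lengths directly from this form.

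For the $\langle t\rangle$-$\langle t\rangle$ cosets, left- and right-multiplication by $\langle t\rangle$ only alters the outer $t$-powers, so the reduced form shows that elements split into three families corresponding to the cosets of $1$, $s$, and $s t^{a_1} s$ (for $1\le a_1\le r-1$): the pure $t$-powers (coming from $\sigma=1$ with no middle $s$, or from $\sigma=s$ with $a_1=0$); the elements $t^{a_0} s t^{a_1}$ with $\sigma=1$; and the elements $t^{a_0} s t^{a_1} s$ with $\sigma=s$ and $a_1\ne 0$. In each family the representative with outer $t$-powers zero has minimal length ($0$, $1$, and $a+2$ respectively). Distinctness of the $r+1$ cosets follows from $s t^a s = \operatorname{diag}(1,\omega^a)$ (with $\omega$ a primitive $r$-th root of unity) having its $(2,2)$-entry fixed by left- and right-multiplication by powers of $t$, so different $a\in\{1,\ldots,r-1\}$ give different cosets.

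For the $\langle s\rangle$-$\langle s\rangle$ cosets the analysis is similar but more delicate since $\langle s\rangle=\{1,s\}$ only allows multiplication by $s$ on either side. The coset of $1$ is $\{1,s\}$ with minimum-length representative $1$. For $1\le a\le r-1$ the coset $\{t^a,\, s t^a,\, t^a s,\, s t^a s\}$ has four distinct elements of lengths $a$, $a+1$, $a+1$, $a+2$, giving $t^a$ as the unique minimum-length representative. The remaining elements have reduced forms $t^{a_0} s t^{a_1}$ or $t^{a_0} s t^{a_1} s$ with both $a_0,a_1\ge 1$, and the key group identity here is
\[
s\,(t^a s t^b)\,s \;=\; t^b\,s\,t^a,
\]
verified either directly from the action of $W$ on $\mathbb{C}^2$ or by iterated application of the braid relation $t s t s = s t s t$ together with $s^2=1$. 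This shows $\langle s\rangle t^a s t^b \langle s\rangle = \langle s\rangle t^b s t^a \langle s\rangle$, so we may restrict to $a\ge b$, and a length comparison of the coset's four elements $t^a s t^b$, $s t^a s t^b$, $t^a s t^b s$, $s t^a s t^b s$ gives lengths $a+b+1$, $a+b+2$, $a+b+2$, $a+b+1$ respectively (the last via the identity), making $t^a s t^b$ with $a\ge b\ge 1$ a minimum-length representative.

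Exhaustiveness is then confirmed by a size count: the listed $\langle s\rangle$-cosets have sizes $2$ (for $d=1$), $4$ each for $d=t^a$, $2$ each for the diagonal cases $d = t^a s t^a$, and $4$ each for $d = t^a s t^b$ with $b<a$, summing to $2+4(r-1)+2(r-1)+4\binom{r-1}{2}=2r^2=|W|$. I expect the main technical step to be the identity $s(t^a s t^b)s = t^b s t^a$, since it is the one place where a non-trivial group relation beyond basic braid moves is needed, and it is what halves the naive $(r-1)^2$ candidates $t^a s t^b$ down to the $\binom{r}{2}$ listed.
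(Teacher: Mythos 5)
Your proof is correct and takes essentially the same route as the paper, which offers no separate argument but derives the lemma ``immediately'' from the Bremke--Malle normal form (Lemma~\ref{lem:BM97:red_word}) and the choice of reduced basis --- precisely your starting point, with lengths read off the reduced words. Your extra ingredients (the matrix realization $st^as=\operatorname{diag}(1,\omega^a)$, the identity $s\,t^a s t^b\,s = t^b s t^a$, and the count $2+4(r-1)+2(r-1)+4\binom{r-1}{2}=2r^2$) just make explicit the distinctness and exhaustiveness checks that the paper leaves implicit.
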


\begin{prop}\label{prop:t-dcoset.graph.Gr12}
  The $\H$-double coset graph for $\l t\r$-$\l t\r$ double cosets in the Hecke algebra $\H$ of $G(r,1,2)$ is as in Figure \ref{fig:dcosetgraph:t:r12}.
\end{prop}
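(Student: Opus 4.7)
My plan is to work through the double cosets one at a time, using the minimal length representatives $\{1,s,st^as\mid 1\le a\le r-1\}$ from the preceding lemma; these are the vertices of $\mathcal G$. Since every vertex carries a self-loop by definition, I only have to detect the cross-edges. For each representative $d$ this amounts to computing $\H_{\l t\r}\T_{\bm d,B}\H_{\l t\r}$ and expanding each product $\T_{t^i}\T_{\bm d,B}\T_{t^j}$ in the basis $B$ to see which double cosets the resulting basis terms occupy.

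The cases $d=1$ and $d=s$ are immediate. A product $\T_{t^i}\T_1\T_{t^j}$ lies in $\H_{\l t\r}=\H_{\l t\r\cdot 1\cdot\l t\r}$ once $\T_t^{i+j}$ has been rewritten via \eqref{powerrel:xi} (Lemma \ref{lem:power.of.Ts}), while $\T_{t^i}\T_s\T_{t^j}$ is already the basis element $\T_{t^ist^j}\in\H_{\l t\r s\l t\r}$. Both vertices are therefore terminal and contribute only the self-loop.

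The substance lies in $d=st^as$, where $\T_{\bm d,B}=\T_s\T_{t^a}\T_s$. Here I would apply Lemma \ref{lem:BM97:red_word_diff} to get
\[\T_{t^i}\T_s\T_{t^a}\T_s\T_{t^j}=\T_t^{i+j}\T_s\T_{t^a}\T_s+\xi_s\sum_{k=1}^{j}\bigl(\T_t^{i+a+j-k}\T_s\T_{t^k}-\T_t^{i+k}\T_s\T_t^{a+j-k}\bigr).\]
Once $\T_t^{i+j}$ is reduced via \eqref{powerrel:xi}, the leading summand becomes an $R$-combination of basis elements $\T_{t^kst^as}$ inside $\H_{\l t\r st^as\l t\r}$. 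Each correction summand contains only a single factor $\T_s$, so the analogous reduction of its outer $\T_t$ powers writes it as a combination of basis elements $\T_{t^bst^c}\in\H_{\l t\r s\l t\r}$. In particular no correction term can stray into a different coset $\l t\r st^bs\l t\r$ with $b\ne a$ or into $\H_{\l t\r\cdot 1\cdot\l t\r}$, so the only conceivable cross-edge from $\l t\r st^as\l t\r$ is one to $\l t\r s\l t\r$.

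The delicate point, and where I expect the argument to need care, is deciding whether that arrow is genuinely present. Reindexing the first half of the correction sum by $k\mapsto a+j-k$ shows that for $a=1$ the two halves coincide and cancel, consistent with the Hecke-level identity $\T_t\T_s\T_t\T_s=\T_s\T_t\T_s\T_t$, so $\l t\r sts\l t\r$ is terminal as well. For $a\ge 2$ the shift breaks the symmetry, and the smallest instance $i=0$, $j=1$ already leaves the surviving correction $\xi_s(\T_t^a\T_s\T_t-\T_t\T_s\T_t^a)$, whose two summands are distinct basis elements of $\H_{\l t\r s\l t\r}$ that cannot mutually cancel. This produces the arrow $\l t\r st^as\l t\r\to\l t\r s\l t\r$ precisely for $2\le a\le r-1$, yielding the graph in Figure \ref{fig:dcosetgraph:t:r12}.
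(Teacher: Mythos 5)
Your proof is correct and takes essentially the same route as the paper's: both reduce the computation to Lemma \ref{lem:BM97:red_word_diff}, observe that every correction term carries exactly one factor $\T_s$ and hence lies in $\H_{\l t\r s\l t\r}$ after reducing the outer powers of $\T_t$, and establish the arrow for $a\ge 2$ from the non-cancelling pair $\xi_s(\T_{t^ast}-\T_{tst^a})$ arising from right multiplication by $\T_t$. The only cosmetic difference is that you dispose of $d=sts$ by the explicit reindexing cancellation in the correction sum, where the paper instead cites stability of the centralizing/additive cosets $d=1,s,sts$; the arguments otherwise coincide.
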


\begin{figure}[ht]
\centerline{\includegraphics{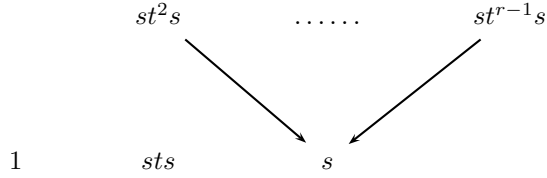}}
\caption{The $\H$-double coset graph for $\l t\r$-$\l t\r$ double cosets in $G(r,1,2)$.  Loops on vertices are not shown.}\label{fig:dcosetgraph:t:r12}
\end{figure}

\begin{proof}
    As the double cosets $\l t\r d\l t\r$ for $d=1,s,sts$ are either additive or centralizing, they are stable with respect to the basis $B$.  Thus they are terminal vertices in the $\H$-double coset graph.  It remains to consider $d=st^as$ for $a>1$.

  Because of our choice of basis, multiplication on the left does not generate terms in other double cosets: $\T_t\T_{st^as}=\T_{tst^as}$.  However multiplication on the right does.  (Note that the choice of reduced basis here does not affect the outcome).  It suffices to consider products of form $\T_{st^as}\T_t$ for $a>1$.  From Lemma \ref{lem:BM97:red_word_diff} we have
\[\T_{st^as}\T_t=\T_{tst^as}+\xi_s(\T_{t^ast}-\T_{tst^a}).\]
Since $a>1$ it follows that we have $\H_{\l t\r st^as\l t\r}\to\H_{\l t\r s\l t\r}$ and to no other double cosets.
\end{proof}

\begin{prop}\label{prop:s-dcoset.graph.Gr12}
Let $\H$ be the Hecke algebra of $G(r,1,2)$.
  The $\H$-double coset graph for $\l s\r$-$\l s\r$ double cosets has non-trivial arrows as follows, for $a\ge b\ge 1$:

\noindent{If $a+b\le r$, }$t^ast^b\longrightarrow t^{a+b-i}st^i$,{ for $1\le i\le b$,}

\noindent{If $a+b>r$, }$\displaystyle t^ast^b\longrightarrow
        \begin{cases} t^i,\ t^xst^i,& 1\le i\le a+b-r,\\
                                    & i<x\le r-1\\
                        t^{a+b-i}st^i,&a+b-r<i\le b.     \end{cases}$
\end{prop}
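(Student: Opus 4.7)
The strategy is to compute, for each distinguished representative $d=t^ast^b$ with $a\ge b\ge 1$, the image $\H_{\l s\r}\T_{\bm d,B}\H_{\l s\r}$ in the fixed basis $B$ and read off which double cosets contain each resulting term. Since $s$ has order two, this image is spanned by the four products $\T_s^\epsilon\T_{\bm d,B}\T_s^\delta$ for $\epsilon,\delta\in\{0,1\}$. The two products with $\epsilon=0$ are already basis elements lying in $\H_{\l s\r d\l s\r}$, and the quadratic relation $\T_s^2=\xi_s\T_s+1$ shows that $\T_s\T_{\bm d,B}\T_s$ contributes no double cosets beyond those produced by $\T_s\T_{\bm d,B}$. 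Writing $\T_{\bm d,B}=\T_t^a\T_s\T_t^b$, Lemma~\ref{lem:BM97:red_word_diff} gives
\[\T_s\T_{\bm d,B}=\T_{t^bst^as}+\xi_s\sum_{i=1}^{b}\bigl(\T_t^{a+b-i}\T_s\T_{t^i}-\T_{t^i}\T_s\T_t^{a+b-i}\bigr).\]
A direct verification in $G(r,1,2)$ shows $t^bst^as=st^ast^b\in\l s\r d\l s\r$, so the leading term contributes only to the self-loop.

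In the case $a+b\le r$, the remark following Lemma~\ref{lem:BM97:red_word_diff} ensures every term on the right is reduced. Since $i\le b\le(a+b)/2$, we have $\max(i,a+b-i)=a+b-i$, so both $\T_{t^{a+b-i}st^i}$ and $\T_{t^ist^{a+b-i}}$ lie in $\H_{\l s\r t^{a+b-i}st^i\l s\r}$, giving the claimed arrows (the $i=b$ case contributing only the self-loop).

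In the case $a+b>r$, split the sum at $i=a+b-r$. For $a+b-r<i\le b$ the exponent $a+b-i$ is less than $r$, the terms are already reduced, and the previous analysis gives arrows to $\l s\r t^{a+b-i}st^i\l s\r$. For $1\le i\le a+b-r$, apply Lemma~\ref{lem:power.of.Ts} to reduce $\T_t^{a+b-i}$ and obtain
\[\T_t^{a+b-i}\T_s\T_{t^i}-\T_{t^i}\T_s\T_t^{a+b-i}=\sum_{j=0}^{r-1}a_j^{(i)}\bigl(\T_{t^jst^i}-\T_{t^ist^j}\bigr),\]
with $a_j^{(i)}\in R$. The $j=i$ term vanishes; the $j=0$ pair lies in $\H_{\l s\r t^i\l s\r}$, producing the arrows to $t^i$. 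For $j\in\{1,\dots,r-1\}\setminus\{i\}$, both $\T_{t^jst^i}$ and $\T_{t^ist^j}$ lie in $\H_{\l s\r t^{\max(i,j)}st^{\min(i,j)}\l s\r}$. Relabelling $i'=\min(i,j)$ and $x'=\max(i,j)$, the set of pairs $(i,j)$ with $1\le i\le a+b-r$ and $j\in\{1,\dots,r-1\}\setminus\{i\}$ covers exactly $\{(i',x'):1\le i'\le a+b-r,\ i'<x'\le r-1\}$, yielding the second family of arrows.

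The main technical obstacle is the bookkeeping in the final step: one must verify that the apparently extra arrows produced when $j<i$ are absorbed into the listed family of arrows to $t^xst^i$ for $i<x\le r-1$ via the symmetry $(\min(i,j),\max(i,j))$ of the pair indices, so that no third family of arrows appears in the graph.
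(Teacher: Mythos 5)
Your strategy is the paper's own: reduce everything to the single product $\T_s\T_{t^ast^b}$ (right multiplication by $\T_s$ preserves the span of each double coset, via the quadratic relation), expand by Lemma~\ref{lem:BM97:red_word_diff}, split the sum at $i=a+b-r$, expand $\T_t^{a+b-i}$ by Lemma~\ref{lem:power.of.Ts} when $a+b-i\ge r$, and use $\l s\r t^jst^i\l s\r=\l s\r t^ist^j\l s\r$ for the min/max bookkeeping. That bookkeeping is fine. But there is a genuine gap: an arrow $t^ast^b\to d'$ exists only if some term of the expansion survives with \emph{nonzero} coefficient, and you never verify survival. You identify which double cosets the formal terms belong to, but treat the only danger as ``producing a third family of arrows''; the actual danger, to which the paper devotes most of its proof (the $L_\pm$, $R_\pm$ decomposition, the exponent-sum comparison, the positivity and distinctness of the coefficients), is that claimed arrows get \emph{killed} by cancellation.

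Concretely, two things are missing. First, you write $a_j^{(i)}\in R$ without noting these coefficients are nonzero; the paper observes each is a positive $\N$-linear combination of the $\xi_{t,j}$, and without this even the arrows $t^ast^b\to t^i$ are unproven. Second, and more seriously, terms from different outer indices collide: for $i\neq j$ with both $i,j\le a+b-r$, the basis element $\T_{t^jst^i}$ receives $+\xi_s a_j^{(i)}$ from the positive part at outer index $i$ and $-\xi_s a_i^{(j)}$ from the negative part at outer index $j$, so its net coefficient is $\xi_s\bigl(a_j^{(i)}-a_i^{(j)}\bigr)$. Since $a_j^{(i)}$ genuinely depends on $i$ (the power $a+b-i$ varies with the outer index), your assertion that only the $j=i$ terms vanish is unjustified, and the arrows $t^ast^b\to t^xst^i$ with $i<x\le a+b-r$ remain unproven until one shows $a_x^{(i)}\neq a_i^{(x)}$. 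Note that the bare distinctness clause of Lemma~\ref{lem:power.of.Ts} does not settle this either, because both coefficients lie in the same index-sum class mod $r$; one way to close the gap is the recursion from that lemma's proof, $a_x^{(i)}=a_{x-1}^{(i+1)}+a_{r-1}^{(i+1)}\xi_x$, which iterated $x-i$ times gives $a_x^{(i)}-a_i^{(x)}$ as a nonzero positive polynomial. (A similar, easier check — exponent sums $a+b$ versus at most $a+b-1$ — is needed to rule out collisions between the reduced-range terms and the expanded ones; your handling of the $i=a=b$ cancellation via the self-loop remark is adequate.)
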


\begin{proof}
Firstly, note that the double cosets $\l s\r 1\l s\r$ and $\l s\r tst\l s\r$ are centralizing, and the double coset $\l s\r t^a\l s\r$ is additive.  Thus these are all stable, and the cases listed in the statement: $\l s\r t^ast^b\l s\r$ for $1\le b\le a\le r-1$ with $a>1$ are the only others remaining to consider.

In this case multiplication from the right leaves the double coset invariant: $\T_{t^ast^b}\T_s=\T_{t^ast^bs}\in\H_{\l s\r t^ast^b\l s\r}$, so it remains to consider the left multiplication, and again we will use Lemma \ref{lem:BM97:red_word_diff}.

  In the case $a+b\le r$, we have
  \begin{align*}\T_s\T_{t^ast^b}&=\T_{t^bst^as}+\xi_s\sum_{i=1}^b(\T_{t}^{a+b-i}\T_{st^i}-\T_{t^is}\T_t^{a+b-i})\\
                                &=\T_{t^bst^as}+\xi_s\sum_{i=1}^b(\T_{t^{a+b-i}st^i}-\T_{t^ist^{a+b-i}})
  \end{align*}
  since $a+b-i<r$.  There will be cancelling in the terms $\T_{t^{a+b-i}st^i}-\T_{t^ist^{a+b-i}}$ if and only if $a+b-i=i$.  But since $i\le b\le a$ this can only happen if $a=b$ (since otherwise $a+b-i\ge a>b\ge i$).  In this case the terms $\T_{t^ast^a}-\T_{t^ast^a}$ cancel but their double coset is still represented by $\T_{t^ast^as}$ and the statement holds.

In the case $a+b>r$ it is slightly more complicated as the exponents $a+b-i$ in Eq.~\eqref{eq:lem:BM97:red_word_diff} may be greater than $r-1$.  If $a+b-i\ge r$ then $\T_{t}^{a+b-i}$ will be a linear combination of elements of $\mathcal T=\{1,\T_t,\dots,\T_{t^{r-1}}\}$.  Note that the coefficient of each element of $\mathcal T$ will be non-zero in the expansion of $\T_{t}^{a+b-i}$, as each coefficient will be a \emph{positive} linear combination of the $\xi_{t,i}$.
Thus if $a+b>r$ we have for non-zero $a_i\in\N[\xi_{t,0},\dots,\xi_{t,r-1}]$,
\begin{align*}
\T_s\T_{t^ast^b}&=\T_{t^bst^as}+\xi_s\sum_{i=1}^b\left(\T_{t}^{a+b-i}\T_{st^i}-\T_{t^is}\T_t^{a+b-i}\right)\cr
&=\T_{t^bst^as}\cr
&\qquad+\xi_s\sum_{i=1}^{a+b-r}\left[\left(a_0+a_1\T_t+\dots+a_{r-1}\T_{t^{r-1}}\right)\T_{st^i}\right.\cr &\hspace{3cm}\left.-\T_{t^is}\left(a_0+a_1\T_t+\dots+a_{r-1}\T_{t^{r-1}}\right)\right]\cr
&\qquad+\xi_s\sum_{i=a+b-(r-1)}^b\left(\T_{t}^{a+b-i}\T_{st^i}-\T_{t^is}\T_t^{a+b-i}\right)\cr
&=\T_{t^bst^as}\cr
&\qquad+\xi_s\sum_{i=1}^{a+b-r}
\left[\left(a_0\T_{st^i}+a_1\T_{tst^i}+\dots+a_{r-1}\T_{t^{r-1}st^i}\right)\right. \cr
&\hspace{3cm}\left. -\left(a_0\T_{t^is}+a_1\T_{t^ist}+\dots+a_{r-1}\T_{t^ist^{r-1}}\right)\right]\cr
&\qquad+\xi_s\sum_{i=a+b-(r-1)}^b\left(\T_{t^{a+b-i}st^i}-\T_{t^ist^{a+b-i}}\right)\\
&=\T_{t^bst^as}\\
&\quad+\xi_s\left[\sum_{i=1}^{a+b-r}\left(a_0\T_{st^i}+\dots+a_{r-1}\T_{t^{r-1}st^i}\right)+\sum_{i=a+b-r+1}^b\T_{t^{a+b-i}st^i}\right]\\
&\quad-\xi_s\left[\sum_{i=1}^{a+b-r}\left(a_0\T_{t^is}+\dots+a_{r-1}\T_{t^ist^{r-1}}\right)+\sum_{i=a+b-r+1}^b\T_{t^ist^{a+b-i}}\right]
\end{align*}

It remains to check whether any terms cancel in these sums of differences.  To ease description, let \[L_+:=\sum_{i=1}^{a+b-r}\left(a_0\T_{st^i}+\dots+a_{r-1}\T_{t^{r-1}st^i}\right),\quad R_+:=\sum_{i=a+b-r+1}^b\T_{t^{a+b-i}st^i}\]
and
\[L_-:=\sum_{i=1}^{a+b-r}\left(a_0\T_{t^is}+\dots+a_{r-1}\T_{t^ist^{r-1}}\right),\quad R_-:=\sum_{i=a+b-r+1}^b\T_{t^ist^{a+b-i}}.\]
Then we have shown above that
\[\T_s\T_{t^ast^b}=\T_{t^bst^as}+(L_++R_+)-(L_-+R_-),\]
noting that all of $\{L_-,L_+,R_-,R_+\}$ are positive (in $\H^+$).

It is not possible for a term in the $L_+$ to cancel with a term in $R_-$, because to do so the exponent sum of the $t$'s in a  term in $L_+$ would have to be $a+b$, but it is bounded above by $r-1+(a+b-r)=a+b-1$.  Similarly, terms in $R_+$ and $L_-$ cannot cancel, and we only need consider possible cancellations between $L_+$ and $L_-$ on the one hand, and $R_+$ and $R_-$ on the other.

We start with the latter.  Note that $i\le a+b-i$ since $i\le b$ and $b\le a$.  Thus, these terms will only cancel if for some $i$, $i=a+b-i$.  In this case $2i=a+b$.  Since $i\le b$ and $b\le a$, we have $2i\le 2b$ and so the only solution is $i=a=b$.  In this situation despite the cancellation we have the term $\T_{t^bst^as}=\T_{t^ast^as}$ in the expansion so we still have $t^ast^a\to t^ast^a$ (that is, we haven't lost representation from any double coset, despite the cancelling).  Thus we have $t^ast^b\to t^{a+b-i}st^i$ with $a+b-r<i\le b$.

Now the former.  By Lemma \ref{lem:power.of.Ts}, we have that the $a_i$ are all distinct, and therefore the only cancellation possible is $a_i\T_{t^ist^i}-a_i\T_{t^ist^i}$.  This leaves the remaining terms intact, and because we have $\l s\r t^ist^j\l s\r=\l s\r t^jst^i\l s\r$, we therefore also have $t^ast^b\to t^i$ for $1\le i\le a+b-r$ and $t^ast^b\to t^xst^i$ for $i<x\le r-1$.  This completes the proof.
\end{proof}

This completes the description of the $\H$-double coset graphs in type $G(r,1,2)$ for $W_J$-$W_J$ double cosets where $J\subseteq S$ and $|J|=1$.

Of the double cosets in $G(r,1,2)$, it follows that only $\l t\r s\l t\r$, $\l t\r 1\l t\r$, $\l t\r sts\l t\r$, $\l s\r t^i\l s\r$ and $\l s\r t^ist\l s\r$ for $0\le i\le r-1$ are stable.  For all others, to evaluate relationships among coefficients on elements of the centralizer of a generator, it is necessary to consider linear combinations of a set of double cosets.  As can be seen from the above proposition, this may be quite a large task.

\begin{rem}
Observe that the $\H$-double coset graphs in the Hecke algebra of $G(r,1,2)$ are partially ordered sets.  In greater generality, while the reflexivity of the relation is clear, antisymmetry and transitivity are not because they depend on the relations in the Hecke algebra in question (and may also depend on the choice of basis).
\end{rem}

\subsubsection{Example: $\H$-double coset graphs in type $G(4,1,2)$.}\label{sec:g412.d.coset.graph}

In $G(4,1,2)$ the $\l s\r$-$\l s\r$ double coset representatives are
\[
\{1,t,t^2,t^3,tst,t^2st,t^3st,t^2st^2,t^3st^2,t^3st^3\}.
\]
Thus for $t^ast^b$, we have $a+b> 4=r$ for the representatives $t^3st^2$, $t^3st^3$.  It follows from Proposition \ref{prop:s-dcoset.graph.Gr12} that the non-trivial arrows in the graph are:
\begin{align*}
  t^3st^3&\to t,t^2st,t^3st,t^2,t^3st^2\\
  t^3st^2&\to t,t^2st,t^3st\\
  t^2st^2&\to t^3st.
\end{align*}

The graph therefore (drawn as a poset, eliminating redundant edges) is given in Figure \ref{fig:g412.d.coset.graph}.

\begin{figure}[ht]
\centering\includegraphics{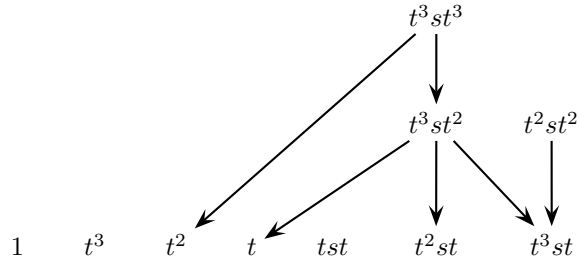}
\caption{$\H$-double coset graph for $\l s\r$-$\l s\r$ double cosets in the Hecke algebra of $G(4,1,2)$, drawn as a poset.}\label{fig:g412.d.coset.graph}
\end{figure}

\subsection{Centres and centralizers in type $G(4,1,2)$}\label{sec:g412.bases}

For low $r$ the results of Lemma \ref{prop:t-dcoset.graph.Gr12} and Proposition \ref{prop:s-dcoset.graph.Gr12} are less daunting and we have used them to find relationships among coefficients of elements of the centralizers.  We have then used these relationships to find relationships among coefficients in central elements, and consequently have found integral bases for the centre of the Hecke algebras in types $G(3,1,2)$ and $G(4,1,2)$.

We give as an example the bases of the centralizers and centre for the Hecke algebra of $G(4,1,2)$.
As above, write $s:=s_1$, so that \[G(4,1,2)=\l s,t\mid s^2=t^4=1,stst=tsts\r.\]
Note that as with $G_4$ there is a choice of basis for $\H$ involved, and we have chosen one consistent with the form in Lemma \ref{lem:BM97:red_word}, namely
\begin{multline*}
\{\T_{1},\T_{s},\T_{t},\T_{t^2},\T_{t^3},\T_{ts},\T_{st},\T_{st^2},\T_{t^2s},\T_{st^3},\T_{t^3s},\T_{tst},\T_{t^2st},\T_{tst^2},\\ \T_{t^2st^2},\T_{t^3st},\T_{tst^3},\T_{t^2st^3},\T_{t^3st^2},\T_{t^3st^3},\T_{sts},\T_{tsts},\T_{t^2sts},\T_{t^3sts},\\
\T_{st^2s},\T_{tst^2s},\T_{t^2st^2s},  \T_{t^3st^2s},\T_{st^3s},\T_{tst^3s},\T_{t^2st^3s},\T_{t^3st^3s}\}.
\end{multline*}
There is a set of transformations between bases similar to that for the Hecke algebra of $G_4$ relating for instance $\T_{st^2st^2}$, which is not in the basis, with terms which are in the basis.  We omit this for brevity.

\def\xc {{\xi_{t,3}}}
\def\xb {{\xi_{t,2}}}
\def\xa {{\xi_{t,1}}}
\def\xs {{\xi_s}}

The $t$-class and $s$-class elements are given below.
These were constructed in a manner very similar to the case of $G_4$ in the case of those double cosets which are stable.  However for non-stable double cosets the double coset graph was required, so that for instance relationships among coefficients for the $s$-class containing $t^2st^2$ needed to be found by considering a linear combination of terms from the basis elements in $\H_{\l s\r t^2st^2\l s\r}\cup\H_{\l s\r t^3st\l s\r}$.

Observe that the first term in each $t$-class (resp. $s$-class) element below is distinguished, in the sense (of Proposition~\ref{basis}) that it appears uniquely in that $t$-class (resp. $s$-class) element.

The $t$-class elements:
\[\begin{array}{lllll}
{\rel{T1}}: \T_1    &{\rel{T2}}: \T_t       & {\rel{T3}}: \T_{t^2}  &{\rel{T4}}: \T_{t^3}\\
{\rel{T5}: \T_{sts}}&{\rel{T6}}: \T_{tsts}  &{\rel{T7}}: \T_{t^2sts}&{\rel{T8}}: \T_{t^3sts}\\
\multicolumn{2}{l}{\rel{T9}: \T_{st^2s}+\xs\T_{tst}}&\multicolumn{2}{l}{\rel{T10}: \T_{tst^2s}+\xs\T_{t^2st}}&\multicolumn{1}{l}{\rel{T11}: \T_{t^2st^2s}+\xs\T_{t^3st}}\\
\multicolumn{5}{l}{\rel{T12}: \T_{t^3st^2s}+\xs\T_{st}+\xs\xa\T_{tst}+\xs\xb\T_{t^2st}+\xs\xc\T_{t^3st}}\\
\multicolumn{3}{l}{\rel{T13}: \T_{st^3s}+\xs\T_{t^2st}+\xs\T_{tst^2}}&\multicolumn{2}{l}{\rel{T14}: \T_{tst^3s}+\xs\T_{t^3st}+\xs\T_{t^2st^2}}\\
\multicolumn{5}{l}{\rel{T15}: \T_{t^2st^3s}+\xs\T_{st}+\xs\xa\T_{tst}+\xs\xb\T_{t^2st}+\xs\xc\T_{t^3st}+\xs\T_{t^3st^2}}\\
\multicolumn{5}{l}{\rel{T16}: \T_{t^3st^3s}+\xs\xc\T_{st}+\xs\T_{st^2}+\xs(1+\xa\xc)\T_{tst}+\xs\xa\T_{tst^2}+}\\
\multicolumn{5}{l}{\hspace{1cm} \xs(\xa+\xb\xc)\T_{t^2st}+\xs(\xb+\xi_{t,3}^2)\T_{t^3st}+\xs\xb\T_{t^2st^2}+\xs\xc\T_{t^3st^2}}\\
\multicolumn{5}{l}{\rel{T17}: \T_s+\T_{t^3st}+\T_{tst^3}+\T_{t^2st^2}-\xb\T_{tst}-\xc\T_{t^2st}-\xc\T_{tst^2}}\\
\multicolumn{5}{l}{\rel{T18}: \T_{ts}+\T_{st}+\T_{t^3st^2}+\T_{t^2st^3}+\xa\T_{tst}-\xc\T_{t^2st^2}}\\
\multicolumn{5}{l}{\rel{T19}: \T_{t^2s}+\T_{st^2}+\T_{tst}+\T_{t^3st^3}+\xa\T_{t^2st}+\xa\T_{tst^2}+\xb\T_{t^2st^2}}\\
\multicolumn{5}{l}{\rel{T20}: \T_{t^3s}+\T_{st^3}+\T_{t^2st}+\T_{tst^2}+\xa\T_{tst^3}+\xa\T_{t^3st}+\xa\T_{t^2st^2}+}\\
\multicolumn{5}{l}{\hspace{1cm}\xb\T_{t^2st^3}+\xb\T_{t^3st^2}+\xc\T_{t^3st^3}.}
\end{array}\]

The  $s$-class elements:
\[\begin{array}{lllll}
\rel{S1}: \T_1&\rel{S2}: \T_s\quad &\rel{S3}: \T_{tst}&\rel{S4}: \T_{tsts}\\
\multicolumn{3}{l}{\rel{S5}: \T_{t^2st^2}+\xs\T_{t^3sts}}&\multicolumn{2}{l}{\rel{S6}: \T_{t^2st^2s}+\xs\T_{t^3st}+\xi_s^2\T_{t^3sts}}\\
\multicolumn{5}{l}{\rel{S7}: \T_{t^3st^3}+\xs\xc\T_{t^3st^2s}+\xi_s^2\xi_{t,3}^2\T_{t^3st}+\xs(\xb+\xi_{t,3}^2(1+\xi_s^2))\T_{t^3sts}+}\\
\multicolumn{5}{l}{\hspace{1cm}\xi_s^2\xb\xc\T_{t^2st}+\xs\xb\xc(1+\xi_s^2)\T_{t^2sts}+\xs\T_{st^2s}+\xi_s^2\xc\T_{st}+}\\
\multicolumn{5}{l}{\hspace{1cm}\xs\xc(1+\xi_s^2)\T_{sts}}\\
\multicolumn{5}{l}{\rel{S8}: \T_{t^3st^3s}+\xs\xc\T_{t^3st^2}+\xi_s^2\xc\T_{t^3st^2s}+\xs(\xb+\xi_{t,3}^2(1+\xi_s^2))\T_{t^3st}+}\\
\multicolumn{5}{l}{\hspace{1cm}\xi_s^2(\xb+\xi_{t,3}^2(2+\xi_s^2))\T_{t^3sts}+\xs\xb\xc(1+\xi_s^2)\T_{t^2st}+}\\
\multicolumn{5}{l}{\hspace{1cm}\xi_s^2\xb\xc(2+\xi_s^2)\T_{t^2sts}+\xi_s^2\T_{st^2s}+\xs\T_{st^2}+\xi_s^2\xc(2+\xi_s^2)\T_{sts}+}\\
\multicolumn{5}{l}{\hspace{1cm}\xs\xc(1+\xi_s^2)\T_{st}}\\
\multicolumn{1}{l}{\rel{S9}: \T_t+\T_{sts}}&
\multicolumn{2}{l}{\rel{S10}: \T_{ts}+\T_{st}+\xs\T_{sts}}&
\multicolumn{2}{l}{\rel{S11}: \T_{t^2}+\T_{st^2s}}\\
\multicolumn{3}{l}{\rel{S12}: \T_{t^2s}+\T_{st^2}+\xs\T_{st^2s}}&
\multicolumn{2}{l}{\rel{S13}: \T_{t^3}+\T_{st^3s}}\\
\multicolumn{3}{l}{\rel{S14}: \T_{t^3s}+\T_{st^3}+\xs\T_{st^3s}}&
\multicolumn{2}{l}{\rel{S15}: \T_{tst^2}+\T_{t^2st}+\xs\T_{t^2sts}}\\
\multicolumn{3}{l}{\rel{S16}: \T_{tst^2s}+\T_{t^2sts}+\xs\T_{t^2st}+\xi_s^2\T_{t^2sts},}&
\multicolumn{2}{l}{\rel{S17}: \T_{tst^3}+\T_{t^3st}+\xs\T_{t^3sts},}\\
\multicolumn{5}{l}{\rel{S18}: \T_{tst^3s}+\T_{t^3sts}+\xs\T_{t^3st}+\xi_s^2\T_{t^3sts},}\\
\multicolumn{5}{l}{\rel{S19}: \T_{t^2st^3}+\T_{t^3st^2}+\xs\T_{t^3st^2s}+\xi_s^2\xc\T_{t^3st}+\xs\xc(2+\xi_s^2)\T_{t^3sts}+}\\
\multicolumn{5}{l}{\hspace{1cm}\xi_s^2\xb\T_{t^2st}+\xs\xb(2+\xi_s^2)\T_{t^2sts}+\xs(2+\xi_s^2)\T_{sts}+\xi_s^2\T_{st},}\\
\multicolumn{5}{l}{\rel{S20}: \T_{t^2st^3s}+\T_{t^3st^2s}+\xs\T_{t^3st^2}+\xi_s^2\T_{t^3st^2s}+\xs\xc(2+\xi_s^2)\T_{t^3st}+}\\
\multicolumn{5}{l}{\hspace{1cm}\xi_s^2\xc(3+\xi_s^2)\T_{t^3sts}+\xs\xb(2+\xi_s^2)\T_{t^2st}+}\\
\multicolumn{5}{l}{\hspace{1cm}\xi_s^2\xb(3+\xi_s^2)\T_{t^2sts}+\xi_s^2(3+\xi_s^2)\T_{sts}+\xs(2+\xi_s^2)\T_{st}.}
\end{array}\]

These bases of centralisers may be combined to form an integral basis for the centre.
We write {{[Ci]}} for the class element corresponding to the conjugacy class $C_i$.  The centre has an integral basis consisting of the following fourteen class elements:

\[
\begin{array}{lll}
\rel{C1}: \T_1&\qquad\rel{C2}: \T_{tsts}&\qquad\rel{C3}: \T_{t^2st^2s}+\xs\T_{t^3st}+\xi_s^2\T_{t^3sts}\\
\multicolumn{3}{l}{\rel{C4}: \T_{t^3st^3s}+\xs\xc\T_{t^3st^2}+\xi_s^2\xc\T_{t^3st^2s}+\xs\left(\xb+\xi_{t,3}^2(1+\xi_s^2)\right)\T_{t^3st}+}\\
\multicolumn{3}{l}{\hspace{.5cm}\xi_s^2\left(2\xb+\xi_{t,3}^2(2+\xi_s^2)\right)\T_{t^3sts}+\xs\xb\T_{t^2st^2}+\xi_s^2\T_{st^2s}+\xs\T_{st^2}+}\\
\multicolumn{3}{l}{\hspace{.5cm}\xs\left(\xa+\xb\xc(1+\xi_s^2)\right)\T_{t^2st}+\xi_s^2\left(\xa+\xb\xc(2+\xi_s^2)\right)\T_{t^2sts}+}\\
\multicolumn{3}{l}{\hspace{.5cm}\xs\xc(1+\xi_s^2)\T_{st}+\xi_s^2\xc(2+\xi_s^2)\T_{sts}+\xs(1+\xa\xc)(1+\xi_s^2)\T_{tst},}\\
\multicolumn{2}{l}{\rel{C5}: \T_t+\T_{sts}}&\qquad{\rel{C6}: \T_{t^2}+\T_{st^2s}+\xs\T_{tst}}\\
\multicolumn{3}{l}{\rel{C7}: \T_{t^3}+\T_{st^3s}+\xs\T_{t^2st}+\xs\T_{tst^2}+\xi_s^2\T_{t^2sts}}\\
\multicolumn{3}{l}{\rel{C8}: \T_{tst^2s}+\T_{t^2sts}+\xs\T_{t^2st}+\xi_s^2\T_{t^2sts}}\\
\multicolumn{3}{l}{\rel{C9}: \T_{tst^3s}+\T_{t^3sts}+\xs\T_{t^3st}+\xs\T_{t^2st^2}+2\xi_s^2\T_{t^3sts}}\\
\multicolumn{3}{l}{\rel{C10}: \T_{t^2st^3s}+\T_{t^3st^2s}+\xs\T_{t^3st^2}+\xi_s^2\T_{t^3st^2s}+\xs\xc(2+\xi_s^2)\T_{t^3st}+}\\
\multicolumn{3}{l}{\hspace{.5cm}\xi_s^2\xc(3+\xi_s^2)\T_{t^3sts}+\xs\xb(2+\xi_s^2)\T_{t^2st}+\xi_s^2\xb(3+\xi_s^2)\T_{t^2sts}+}\\
\multicolumn{3}{l}{\hspace{.5cm}\xi_s^2(3+\xi_s^2)\T_{sts}+\xs\xa(2+\xi_s^2)\T_{tst}+\xs(2+\xi_s^2)\T_{st}}\\
\multicolumn{3}{l}{\rel{C11}: \T_s+\T_{tst^3}+\T_{t^3st}+\T_{t^2st^2}+2\xs\T_{t^3sts}-\xb\T_{tst}-\xc\T_{t^2st}-}\\
\multicolumn{3}{l}{\hspace{.5cm}\xc\T_{tst^2}-\xs\xc\T_{t^2sts}}\\
\multicolumn{3}{l}{\rel{C12}: \T_{ts}+\T_{st}+\T_{t^3st^2}+\T_{t^2st^3}+\xs\T_{t^3st^2s}+\xi_s^2\xc\T_{t^3st}+\xi_s^2\T_{st}+}\\
\multicolumn{3}{l}{\hspace{.5cm}\xs\xc(1+\xi_s^2)\T_{t^3sts}+\xi_s^2\xb\T_{t^2st}+\xs\xb(2+\xi_s^2)\T_{t^2sts}-\xc\T_{t^2st^2}+}\\
\multicolumn{3}{l}{\hspace{.5cm}\xs(3+\xi_s^2)\T_{sts}+\xa(1+\xi_s^2)\T_{tst}}\\
\multicolumn{3}{l}{\rel{C13}: \T_{t^2s}+\T_{st^2}+\T_{tst}+\T_{t^3st^3}+\xs\xc\T_{t^3st^2s}+\xi_s^2\xi_{t,3}^2\T_{t^3st}+}\\
\multicolumn{3}{l}{\hspace{.5cm}\xs\xb\left(1+\xc(1+\xi_s^2)\right)\T_{t^3sts}+\xa\T_{tst^2}+(\xa+\xi_s^2\xb\xc)\T_{t^2st}+}\\
\multicolumn{3}{l}{\hspace{.5cm}\xs\left(\xa+\xb\xc(1+\xi_s^2)\right)\T_{t^2sts}+\xb\T_{t^2st^2}+2\xs\T_{st^2s}+\xi_s^2\xc\T_{st}+}\\
\multicolumn{3}{l}{\hspace{.5cm}\xi_s^2(2+\xa\xc)\T_{tst}+\xs\xc(1+\xi_s^2)\T_{sts}}\\
\multicolumn{3}{l}{\rel{C14}: \T_{t^3s}+\T_{st^3}+\T_{tst^2}+\T_{t^2st}+\xc\T_{t^3st^3}+\xa\T_{t^2st^2}+\xs\T_{st^3s}+}\\
\multicolumn{3}{l}{\hspace{.5cm}\xs\xc\T_{st^2s}+\xs(\xb+\xi_{t,3}^2)\T_{t^3st^2s}+\xb\T_{t^3st^2}+\xb\T_{t^2st^3}+}\\
\multicolumn{3}{l}{\hspace{.5cm}\xs\left(2\xa+2\xb\xc+\xc(\xb+\xi_{t,3}^2)(1+\xi_s^2)\right)\T_{t^3sts}+\xa\T_{tst^3}+}\\
\multicolumn{3}{l}{\hspace{.5cm}\left(\xa+\xi_s^2\xc(\xb+\xi_{t,3}^2)\right)\T_{t^3st}+\xi_s^2(\xb+\xi_{t,3}^2)\T_{st}+}\\
\multicolumn{3}{l}{\hspace{.5cm}\xi_s^2\left(1+\xb(\xb+\xi_{t,3}^2)\right)\T_{t^2st}+\xs\xi_{t,3}^2\left(1+\xi_s^2\right)\T_{sts}+}\\
\multicolumn{3}{l}{\hspace{.5cm}\xi_s^2\left(\xc+\xa(\xb+\xi_{t,3}^2)\right)\T_{tst}}
\end{array}
\]

Note that once again each element has a distinguished term, which is the first as written above.

Figures \ref{fig:g412.c9} and \ref{fig:g412.c11} show some of the structure of some of the class elements, and incidentally constitute proofs that {\rel{C9}} and {\rel{C11}} are central, as they show how each can be written as a linear combination of $s$ and $t$-class elements.

\begin{figure}[ht]
\centerline{\includegraphics{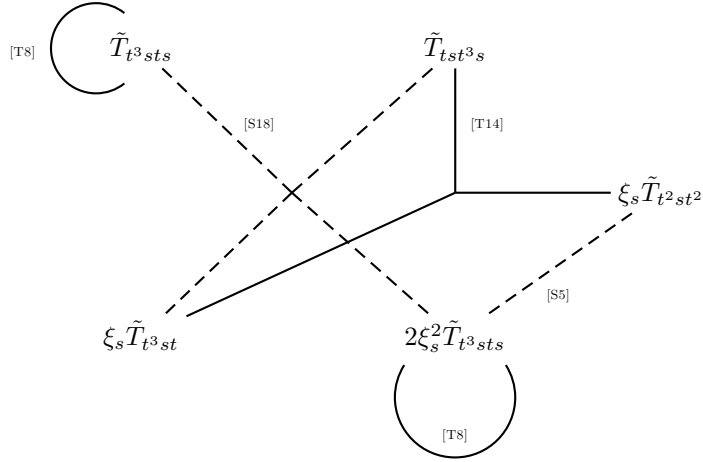}}
\caption{ Diagram for the class element {[C9]} in the Hecke algebra of $G(4,1,2)$. The solid edges indicate terms in the same $t$-class element (as labelled), and the dashed edges indicate terms in the same $s$-class element.}\label{fig:g412.c9}
\end{figure}

\begin{figure}[ht]
\centerline{\includegraphics{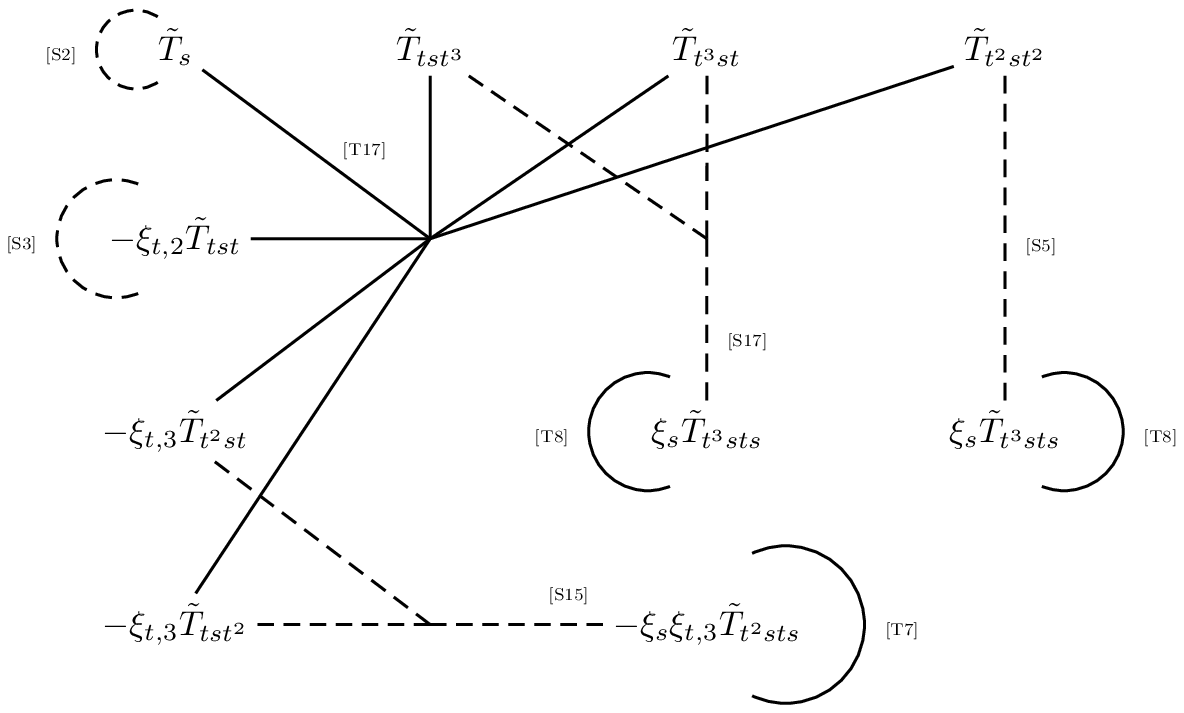}}
\caption{Diagram for the class element {[C11]} in the Hecke algebra of $G(4,1,2)$.  The solid edges indicate terms in the same $t$-class element (as labelled), and the dashed edges indicate terms in the same $s$-class element.}\label{fig:g412.c11}
\end{figure}

%\bibliographystyle{plain}
%\bibliography{AFbib}

\end{document}